\documentclass[reqno]{amsart}
\usepackage{paper-common}
\title{Quantitative Stability, Coercivity and Uniqueness of Optimal Transport Plans}
\author{William Ford$^1$}
\thanks{$^1$CMAP École Polytechnique, Palaiseau, France. Email: \texttt{william.ford@polytechnique.edu}}
\date{}
\hypersetup{
    hidelinks,
    hypertexnames=false,
    pdftitle={Quantitative Stability, Coercivity and Uniqueness of Optimal Transport Plans},
    pdfauthor={William Ford},
    pdfsubject={Quadratic optimal transport},
    pdfkeywords={optimal transport, quantitative stability, uniqueness, coercivity}
}

\begin{document}

\begin{abstract}
For probability measures  on $\R^d$ supported in fixed compact sets, we prove that quadratic optimal transport plans are quantitatively stable in Wasserstein distance under perturbation of both marginal measures, assuming that one of the initial measures satisfies an upper Ahlfors regularity condition with exponent strictly greater than $d-1$. Under the same assumption, we also prove novel quantitative stability results for optimal transport maps.

Furthermore, we prove a coercivity theorem which states that any probability measure on the product space must be quantitatively close to the set of optimal plans, if it has similar marginals and a similar transport cost to the optimal value.

Finally, we prove a quantitative uniqueness theorem which acts as a quantitative counterpart to Brenier's theorem. The Wasserstein diameter of the set of optimal plans is controlled by the distance of one marginal measure to a regular measure for which uniqueness holds. In this way, ``almost uniqueness'' of optimal plans is quantified by the source measure being ``almost regular''.

Examples are provided which prove that the exponents of all estimates are sharp.
    
    \vspace{0.2cm}
    \noindent\textbf{Keywords:} optimal transport, quantitative stability, uniqueness, coercivity, Brenier theorem
    
    \noindent\textbf{2020 Mathematics Subject Classification:} 49Q22, 49K40, 26B05
\end{abstract}

\maketitle
\vspace{-0.7cm}
\setcounter{tocdepth}{1}
\tableofcontents
\vspace{-0.9cm}

\section{Introduction}

\label{ch: introduction}
Given two probability measures $\rho$ and $\mu$ on $\R^d$, the quadratic optimal transport problem between $\rho$ and $\mu$ is the minimisation problem
\begin{equation}
\label{intro: quadratic OT problem}
    \min_{\gamma \in \Pi(\rho, \mu)} \int_{\R^d \times \R^d} \|x-y\|^2 \di \gamma(x,y),
\end{equation}
where $\Pi(\rho, \mu)$ denotes the set of all transport plans between $\rho$ and $\mu$, i.e. probability measures on $\R^d \times \R^d$ with first marginal $\rho$ and second marginal $\mu$. In this paper we study the following questions:
\begin{enumerate}[label=(\Roman*)]
    \item When are optimal plans quantitatively stable under perturbation of $\rho$ and $\mu$?
    \item When are plans with a nearly optimal cost quantitatively close to the set of optimisers?
    \item When is the set of  optimal plans quantitatively close to a singleton, i.e. when does it have small diameter?
\end{enumerate}
These questions are crucial to the many applications of optimal transport, for example in machine learning and computer vision \cite{courty2016optimal,makkuva2020optimal,arjovsky2017wasserstein}, statistics \cite{el2012bayesian,carlier2016vector,deb2023multivariate} and numerical analysis \cite{budd2015geometry,gallouet2018lagrangian}, where one would like to quantify the errors introduced by various approximations and estimations. To respond quantitatively to the above questions we require a distance in which we will measure the perturbations of the marginals, as well as another to compare transport plans. For $r \in [1, \infty]$, the $r$-Wasserstein distance between $\rho$ and $\mu$ is given by
\begin{equation}
    W_r(\rho, \mu) = \min_{\gamma \in \Pi(\rho, \mu)} \| x-y\|_{L^r(\gamma)},
\end{equation}
with problem \eqref{intro: quadratic OT problem} corresponding to the case $r=2$. We will measure perturbations in the marginal measures using Wasserstein distances on $\R^d$. Solutions to \eqref{intro: quadratic OT problem} are also probability measures, and will be compared instead using Wasserstein distances on $\R^{2d}$.

\subsection{Contributions}

For a subset $E$ of Euclidean space, denote by $\PP(E)$ the set of probability measures on $E$. Throughout this paper we assume $d\geq 2$. For marginals $\rho$ and $\mu$, we denote by $\Gamma(\rho, \mu)$ the set of all solutions to problem \eqref{intro: quadratic OT problem}, and $\gamma_{\rho \to \mu}$ will denote any optimiser to \eqref{intro: quadratic OT problem}. 

The four main results of this paper are stated below: stability of optimal plans under perturbation of both marginals and optimal maps with fixed source, coercivity of the objective with respect to transport plans on $\R^{2d}$ with arbitrary marginals, and quantitative uniqueness of solutions near a regular source measure. All the theorems are proven in the text for $W_q$ distances between plans and $L^q$ between maps, for any $q \geq1$. In the introduction we present only the case $q=2$ for readability. The principal assumption in all theorems is that one of the measures satisfies the following regularity condition.
\begin{assumption}
\label{ass: alfors}
    Assume $\rho \in \PP_2(\R^d)$ such that for some $M>0$ and $\theta \in (0, 1]$,
    \begin{equation}
    \rho(B_\eta(x)) \leq M \eta^{d-1 + \theta} \quad\text{ for every } x \in \R^d \text{ and } \eta>0.
    \tag*{$A(M,\theta)$}
    \label{eq: assumption eqn}
    \end{equation}
\end{assumption}
A foundational result in optimal transport due to Brenier and McCann \cite{brenier1991polar, mccann1995existence} states that if $\rho$ vanishes on all sets of Hausdorff dimension $d-1$, then solutions to \eqref{intro: quadratic OT problem} are unique for any $\mu$, and the optimal plan is induced by a transport map, which we denote $T_{\rho \to \mu}$. Our assumption \ref{eq: assumption eqn} is strictly stronger than this, and can be viewed as a quantitative form of the Brenier-McCann hypothesis. 
\begin{theorem*}[Bi-marginal plan stability: Theorem~\ref{thrm: bi marginal plan stability}, Proposition~\ref{prop: sharp exponent stability}]
    Let $\X$ and $\Y$ be compact subsets of $\R^d$ for $d\geq2$, and let $\rho_0$ be a probability measure on $\X$ satisfying \ref{eq: assumption eqn}. Then for all $\rho_1 \in \PP(\X)$, all $\mu_0, \mu_1 \in \PP(\Y)$, and any $\gamma_{\rho_1 \to \mu_1} \in \Gamma(\rho_1, \mu_1)$,
    \begin{equation}
        W_2(\gamma_{\rho_0 \to \mu_0}, \gamma_{\rho_1 \to \mu_1}) \leq C\Big(W_1(\rho_0, \rho_1) + W_1(\mu_0, \mu_1)\Big)^{\frac{\theta}{2(1+\theta)}},
    \end{equation}
    for an explicit constant $C(\X, \Y, M)>0$ given in the statement of Theorem \ref{thrm: bi marginal plan stability}. Moreover, the bound is sharp in exponent for all such $\theta$.
\end{theorem*}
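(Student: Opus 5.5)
The plan is to compare an arbitrary $\gamma_1 := \gamma_{\rho_1 \to \mu_1}$ with $\gamma_0 := \gamma_{\rho_0\to\mu_0}$ using the Brenier potential of the unperturbed problem. Since $\rho_0$ satisfies \ref{eq: assumption eqn}, it vanishes on $(d-1)$-dimensional sets. By Brenier–McCann, $\gamma_0 = (\mathrm{id}, \nabla\phi_0)_\#\rho_0$ for a convex $\phi_0$, and $\gamma_0$ is the unique optimal plan. Since $\X,\Y$ are compact, we may take $\phi_0$ and $\phi_0^*$ Lipschitz on $\X$, $\Y$, with constants depending only on $\operatorname{diam}$ of $\X$ and $\Y$. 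The argument has three steps, carried out in order.

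\emph{Step 1: the gap functional is controlled by the marginal perturbation.} Set
\begin{equation*}
\varepsilon := \int \big(\phi_0(x)+\phi_0^*(y)-x\cdot y\big)\,\mathrm{d}\gamma_1 \ \geq 0 .
\end{equation*}
Rewrite $\int(\phi_0+\phi_0^*)\,\mathrm{d}\gamma_1$ as $\int\phi_0\,\mathrm{d}\rho_1+\int\phi_0^*\,\mathrm{d}\mu_1$. Use $\int \phi_0\,\mathrm{d}\rho_0+\int\phi_0^*\,\mathrm{d}\mu_0 = \int x\cdot y\,\mathrm{d}\gamma_0$, together with the Kantorovich–Rubinstein bound for Lipschitz test functions. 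Then compare $\int x\cdot y\,\mathrm{d}\gamma_1$ with $\int x\cdot y\,\mathrm{d}\gamma_0$ via optimality of $\gamma_1$ and duality for $(\rho_1,\mu_1)$. These give $\varepsilon \leq C(\X,\Y)\big(W_1(\rho_0,\rho_1)+W_1(\mu_0,\mu_1)\big)$.

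\emph{Step 2: a convex-analysis localisation lemma.} If $\phi_0(x)+\phi_0^*(y)-x\cdot y\le t$, then $y$ lies in the $t$-subdifferential of $\phi_0$ at $x$. For any $\delta>0$, a Brøndsted–Rockafellar type argument then gives
\begin{equation*}
\operatorname{dist}\big(y,\partial\phi_0(B_\delta(x))\big)\lesssim t/\delta .
\end{equation*}
Now glue $\gamma_1$ with an optimal $W_1$-coupling of $\rho_1$ and $\rho_0$, producing triples $(x',x,y)$ with $(x',y)\sim\gamma_1$ and $x\sim\rho_0$. Couple $(x',y)$ with $(x,\nabla\phi_0(x))$. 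This yields a bound on $W_2(\gamma_0,\gamma_1)^2$ by a constant times
\begin{equation*}
W_1(\rho_0,\rho_1) + \frac{\varepsilon}{\delta} + \mathbb{P}(|x-x'|>\delta) + \int \operatorname{diam}\,\partial\phi_0\big(B_{2\delta}(x)\big)\,\mathrm{d}\rho_0(x).
\end{equation*}
Here the squares are bounded by $\operatorname{diam}\Y$ times the first power, since everything lives in compact sets, and Markov's inequality gives $\mathbb{P}(|x-x'|>\delta)\le W_1(\rho_0,\rho_1)/\delta$.

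\emph{Step 3: the oscillation estimate, which I expect to be the main obstacle.} The goal is
\begin{equation*}
\int \operatorname{diam}\,\partial\phi_0(B_{2\delta}(x))\,\mathrm{d}\rho_0(x)\le C(M,\X,\Y)\,\delta^\theta .
\end{equation*}
Cover $\X$ by a grid of cubes $Q$ of side $\delta$. Monotonicity of $\partial\phi_0$ should give a perimeter-type bound
\begin{equation*}
\sum_Q \operatorname{diam}\,\partial\phi_0(\widetilde Q)\lesssim \operatorname{diam}(\Y)\,\delta^{1-d},
\end{equation*}
where $\widetilde Q$ is a fixed enlargement of $Q$. I would try to prove this by slicing along coordinate lines, where the one-dimensional monotone increments telescope and sum to at most $\operatorname{diam}\Y$. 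Combined with $\rho_0(Q)\le M\delta^{d-1+\theta}$, this gives the claimed $\delta^\theta$.

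Collecting terms, $W_2^2\lesssim (\varepsilon + W_1)/\delta + \delta^\theta$. Choosing $\delta = (W_1(\rho_0,\rho_1)+W_1(\mu_0,\mu_1))^{1/(1+\theta)}$ gives exactly the exponent $\theta/(2(1+\theta))$ for $W_2$.

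For sharpness, I would construct $\rho_0$ concentrated near a hyperplane with density saturating \ref{eq: assumption eqn} at scale $\delta$. The target $\mu_0$ would be two points, with a perturbation of size $\varepsilon$ flipping which side of the hyperplane goes where on a $\delta$-slab; this should realise the matching lower bound.
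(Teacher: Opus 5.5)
Your upper bound follows essentially the paper's route, where Theorem~\ref{thrm: bi marginal plan stability} is derived from Theorem~\ref{thrm: coercivity general marginals}. Your glued coupling is the paper's $\pi_\beta$, your Br{\o}ndsted--Rockafellar step is a valid substitute for Lemma~\ref{lem: Fenchel bound distance}, and Step~1 is the paper's duality bound on the gap. There are two problems: Step~3 names the right estimate but not a working proof of it, and the sharpness construction cannot work.

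On Step~3: the estimate you want is Theorem~\ref{thrm: quant reg} with $q=1$, but slicing and telescoping do not prove it. Telescoping along coordinate lines only sees the monotone increments of $\partial_i\phi_0$ along $e_i$-lines. So it bounds the Laplacian mass, $\Delta\phi_0(\Omega)\lesssim D_\Y\,\mathcal{H}^{d-1}(\partial\Omega)$ for boxes $\Omega$, which is the paper's integration-by-parts step. It does not bound $\diam\,\partial\phi_0(\widetilde Q)$, because that requires comparing $\partial_i\phi_0$ on \emph{different} $e_i$-lines, and transversally $\partial_i\phi_0$ is not monotone. The missing ingredient is a local estimate $\diam\,\partial\phi_0(\widetilde Q)\lesssim \delta^{1-d}\,\Delta\phi_0(\widehat Q)$ on a further enlargement $\widehat Q$. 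The paper obtains it from three pieces:
\begin{itemize}
\item the half-ball monotonicity argument of Lemma~\ref{lem: diam to avg int lemma}, which bounds the subgradients in $L^\infty$ by an $L^1$ average of $\nabla\phi_0-c$;
\item the $L^1$ Poincar\'e inequality in $BV$;
\item the bound $\|D^2\phi\|_{1,1}\le d\,\Delta\phi$ for convex $\phi$.
\end{itemize}
With that local estimate, bounded overlap of the $\widehat Q$ gives your sum bound.

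On sharpness: the sketch fails as described. Take a two-point target whose points carry mass of order one. The Brenier potential then has a single hyperplane kink, and flipping a slab of $\rho_0$-mass $m\sim\delta^\theta$ can be done in only two ways.
\begin{itemize}
\item Move weight $m$ from one point to the other. Then $W_1(\mu_0,\mu_1)\sim m$, and the fibre-mass imbalance gives $W_2(\gamma_0,\gamma_1)\sim m^{1/2}\sim W_1^{1/2}$.
\item Tilt the bisector by moving the points a distance $\sim\delta$. Then $W_1\sim\delta$, while at most mass $\lesssim\delta^\theta$ changes target, so $W_2\lesssim W_1^{\theta/2}$.
\end{itemize}
Both exponents exceed $\theta/(2(1+\theta))$, so no lower bound of the required order can come out.

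The missing idea is that the displaced target mass must be comparable to the mass that switches. Hence all source mass served by the moving targets must itself lie in a slab of width $\sim h$. In Proposition~\ref{prop: sharp exponent stability} this is arranged as follows:
\begin{itemize}
\item the source is a union of thin cells $R_i^\pm$ of thickness $h_i$ in the singular direction $x_2$, so each has mass $\sigma_i\sim l_ih_i^\theta$;
\item each pair of cells is served by its own targets $y_i^\pm$ of mass $\sigma_i$, at distance $a_i\gg h_i$;
\item shifting these targets by $h_i$ costs only $W_1(\mu_0,\mu_i)\le 2h_i\sigma_i$, but tilts the local bisector enough that each whole cell goes to a single target, so mass $\sigma_i$ must travel a distance $\sim a_i$.
\end{itemize}
You also need a lower bound for plans rather than maps. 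The paper gets $W_q(\gamma_0,\gamma_i)\ge 2a_i\sigma_i^{1/q}$ from a pigeonhole argument over couplings. It uses infinitely many clusters, with $h_i\to0$ exponentially and $a_i,l_i$ decaying only polynomially, so that a single fixed $\rho$ works.
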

Here, by sharpness, we mean that in general, the same inequality cannot hold with a larger exponent. The example given in Proposition \ref{prop: sharp exponent stability} is for fixed source, perturbing only the target. Thus we do not rule out the possibility of a better exponent under perturbations of the source, with fixed target.

Our second stability theorem is for optimal maps. As soon as both marginals are perturbed, a Brenier map may not exist for the new problem. We can nonetheless discuss bi-marginal stability of the gradient of the Brenier potential, which has implications for statistical estimation of transport maps; see Remark \ref{rmk: statistical estimation of transport maps}. For clarity, we state here the fixed-source version where maps remain well defined, leaving the general case to Theorem~\ref{thrm: bi marginal map stability}.

\begin{theorem*}[Fixed source map stability: Theorem~\ref{thrm: bi marginal map stability}, Proposition~\ref{prop: sharp exponent stability}]
    Let $\X$ and  $\Y$ be compact subsets of $\R^d$ for $d \geq2$, and let $\rho$ be a probability measure on $\X$ satisfying \ref{eq: assumption eqn}. Then
    \begin{equation}
        \forall \mu_0, \mu_1 \in \PP(\Y), \quad \|T_{\rho \to \mu_0} - T_{\rho \to \mu_1}\|_{L^2(\rho)} \leq C W_1(\mu_0, \mu_1)^{\frac{\theta}{2(1+ \theta)}},
    \end{equation}
    for an explicit constant $C(\X, \Y, M)>0$ given in the statement of Theorem \ref{thrm: bi marginal map stability}. Moreover, the bound is sharp in exponent for all such $\theta$.
\end{theorem*}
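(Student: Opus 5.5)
Throughout we write $\rho$ for the fixed source measure and use the Kantorovich dual formulation. The approach is to compare the Brenier potentials for $\mu_0$ and $\mu_1$ through a Bregman-type ``duality gap''. We bound this gap above by $W_1(\mu_0,\mu_1)$ and below by $\|\nabla\phi_0-\nabla\phi_1\|_{L^2(\rho)}^2$, up to a loss governed by \ref{eq: assumption eqn}. Optimising a length scale $r$ then produces the exponent $\frac{\theta}{2(1+\theta)}$.

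\emph{Step 1 (normalised potentials).} For $i=0,1$, let $(\phi_i,\psi_i)$ be an optimal pair for the dual problem
\begin{equation*}
\min\Big\{\int\phi\,d\rho+\int\psi\,d\mu_i \;:\; \phi(x)+\psi(y)\ge\langle x,y\rangle\Big\}.
\end{equation*}
We normalise them by double convexification: $\phi_i=\psi_i^*$ taken over $\Y$, and $\psi_i=\phi_i^*$ taken over $\X$. Then $\phi_i$ is convex with $\nabla\phi_i\in\mathrm{conv}(\Y)$, and $\psi_i$ is $R_\X$-Lipschitz, where $R_\X=\max_{x\in\X}\|x\|$. By Brenier--McCann, which applies since \ref{eq: assumption eqn} forces $\rho$ to vanish on $(d-1)$-dimensional sets, we have $T_{\rho\to\mu_i}=\nabla\phi_i$ $\rho$-a.e.

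\emph{Step 2 (upper bound on the gap).} Define the pointwise gaps
\begin{equation*}
D_1(x)=\phi_1(x)+\psi_1(\nabla\phi_0(x))-\langle x,\nabla\phi_0(x)\rangle\ge0,
\end{equation*}
and $D_0$ symmetrically, with the roles of the indices $0$ and $1$ exchanged. Using $\phi_0+\psi_0\circ\nabla\phi_0=\langle x,\nabla\phi_0\rangle$ and $(\nabla\phi_0)_\#\rho=\mu_0$, we get
\begin{equation*}
\int D_1\,d\rho=\Big(\int\phi_1d\rho+\int\psi_1d\mu_0\Big)-\Big(\int\phi_0d\rho+\int\psi_0d\mu_0\Big).
\end{equation*}
Adding the analogous identity for $D_0$ and cancelling terms gives
\begin{equation*}
\int(D_0+D_1)\,d\rho=\int(\psi_1-\psi_0)\,d(\mu_0-\mu_1)\le 2R_\X\,W_1(\mu_0,\mu_1).
\end{equation*}
The last inequality is Kantorovich--Rubinstein duality applied to the $2R_\X$-Lipschitz function $\psi_1-\psi_0$.

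\emph{Step 3 (lower bound; the main obstacle).} Here we must show that for every $r>0$,
\begin{equation*}
\int\|\nabla\phi_1-\nabla\phi_0\|^2\,d\rho\;\le\;\frac{C}{r}\int(D_0+D_1)\,d\rho+C M r^{\theta},
\end{equation*}
with $C=C(\X,\Y)$. The heuristic is as follows. Fix $x$ and set $v=\nabla\phi_1(x)-\nabla\phi_0(x)$. By convexity of $z\mapsto\phi_1(z)+\psi_1(\nabla\phi_0(x))-\langle z,\nabla\phi_0(x)\rangle$, its value grows at least linearly, with slope of order $\|v\|$, on a cone of aperture comparable to $1$ and radius $r$ based at $x$ in direction $v$. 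Hence $D_0+D_1$ is of order $\|v\|r$ on a substantial part of $B_r(x)$, unless the gradients $\nabla\phi_i$ jump within that ball. I would make this rigorous with a Vitali-type covering at scale $r$ of the set $\{\|v\|>\varepsilon\}$, splitting balls into two types. On ``good'' balls, the gap integral controls $\rho$-mass times $\varepsilon r$; it is important here that we integrate $D$ against $\rho$ on the same ball, because we have no lower Ahlfors bound on $\rho$. ``Bad'' balls are those where the subdifferentials of $\phi_0$ or $\phi_1$ are non-singleton nearby. Their union lies in the $r$-neighbourhood of a set that can be covered by $O(r^{1-d})$ balls of radius $r$, being contained in the union of finitely many Lipschitz graphs of bounded extent, as in the structure of singular sets of convex functions with gradients in a compact set. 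By \ref{eq: assumption eqn}, this neighbourhood has $\rho$-mass at most $C r^{1-d}\cdot M r^{d-1+\theta}=CMr^\theta$. Since $\|v\|\le\operatorname{diam}(\Y)$, the bad set contributes at most $\operatorname{diam}(\Y)^2CMr^\theta$ to the $L^2$ norm. Controlling the count of bad balls uniformly, using only compactness of $\X,\Y$, is the delicate point. If the direct covering argument fails, I would first fall back on a one-dimensional slicing argument: on lines parallel to $v$, $\nabla\phi_i$ is monotone, which bounds the total jump.

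\emph{Step 4 (optimisation).} Combining Steps 2 and 3 and choosing $r=W_1(\mu_0,\mu_1)^{1/(1+\theta)}$ gives
\begin{equation*}
\|\nabla\phi_1-\nabla\phi_0\|_{L^2(\rho)}^2\le C\,W_1(\mu_0,\mu_1)^{\theta/(1+\theta)}.
\end{equation*}
Taking square roots yields the stated estimate. Sharpness of the exponent is deferred to the explicit example of Proposition~\ref{prop: sharp exponent stability}.
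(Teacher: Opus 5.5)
Your Steps 1, 2 and 4 are sound and agree with the paper. The identity $\int(D_0+D_1)\,\di\rho=\int(\psi_1-\psi_0)\,\di(\mu_0-\mu_1)$ is exactly the paper's device of adding the two dual suboptimality gaps before applying Kantorovich--Rubinstein, and your choice of $r$ is the same balancing.

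\textbf{The gap is Step 3.} It carries the whole content of the theorem, you do not prove it, and the covering argument you sketch rests on a false claim. The set where $\partial\phi_0$ or $\partial\phi_1$ is not a singleton is only countably $(d-1)$-rectifiable. It need not lie in finitely many Lipschitz graphs, and it can be dense. For instance, let $(q_k)$ enumerate the rationals in $[-1,1]$ and set $\phi(x)=\sum_{k\ge1}2^{-k}|x_1-q_k|$. Then $\partial\phi(\R^d)\subset[-1,1]e_1$, and $\phi$ is a Brenier potential from $\rho$ to $(\nabla\phi)_\#\rho$, because $\rho$ charges no hyperplane. Yet every ball centred in $[-1,1]^d$ meets its singular set. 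With your qualitative definition every ball is bad, so no count $O(r^{1-d})$ can hold.

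Singular points must be weighted by the size of the jump, and this is what Theorem~\ref{thrm: quant reg} does:
\begin{itemize}
\item monotonicity and the $BV$ Poincar\'e inequality give $D_r\phi(x)\lesssim r^{1-d}\Delta\phi(B_{2r}(x))$;
\item integration by parts gives $\Delta\phi(B_{R+2r})\lesssim D_\Y R^{d-1}$, which is where compactness enters;
\item Fubini gives $\int_{\overline B_R}\Delta\phi(B_{2r}(x))\,\di\rho(x)\le\int_{B_{R+2r}}\rho(B_{2r}(z))\,\di\Delta\phi(z)\le M(2r)^{d-1+\theta}\Delta\phi(B_{R+2r})$.
\end{itemize}
This uses the Ahlfors bound directly, with no lower density bound and no covering of a singular set. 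Your slicing fallback does not help: for $\theta<1$, $\rho$ may be singular with respect to Lebesgue measure, so line integrals do not see it. A thresholded covering would work for $q=2$: call a ball bad when the oscillation at scale $r$ exceeds $t$, get a Vitali count $O(D_\Y R^{d-1}t^{-1}r^{1-d})$ from the Laplacian mass, then sum via layer-cake. But that count depends essentially on $t$, and it is not what you wrote.

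\textbf{The good-ball half of Step 3 is also misdirected.} Set $f(z)=\phi_1(z)+\psi_1(\nabla\phi_0(x))-\langle z,\nabla\phi_0(x)\rangle$. Its growth on a cone in direction $v$ says nothing about $D_1(z)$ for $z\neq x$ unless you control $\nabla\phi_0$ near $x$. Without a lower bound on $\rho$, you also cannot turn a large Lebesgue portion of $B_r(x)$ into $\rho$-mass.

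The estimate should be made pointwise at $x$ itself, looking in direction $-v$. Since $\psi_1=\phi_1^*$ on $\Y$ and $\nabla\phi_0(x)\in\Y$, you have $f=G_{\phi_1}(\cdot,\nabla\phi_0(x))\ge0$ on all of $\R^d$. Take $x_1=x-rv/\|v\|$ and any $y_1\in\partial\phi_1(x_1)$. Convexity then gives
\[
D_1(x)=f(x)\ge f(x_1)+\langle y_1-\nabla\phi_0(x),\,x-x_1\rangle\ge r\big(\|v\|-D_r\phi_1(x)\big).
\]
This is Lemma~\ref{lem: Fenchel bound distance}: $\|v(x)\|\le D_r\phi_1(x)+D_1(x)/r$. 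Multiply by $\|v\|\le D_\Y$, integrate against $\rho$, and apply Theorem~\ref{thrm: quant reg}. This yields your Step 3 inequality, and it needs only $D_1$, not $D_0$.
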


Observe that for fixed source measure $\rho$, and  plans $\gamma_{\rho \to \mu_i} = (\id, T_{\rho \to \mu_i})_\#\rho$ induced by maps $T_{\rho \to \mu_i}$, $i=0, 1$, the coupling $(\id, T_{\rho \to \mu_0}, \id, T_{\rho \to \mu_1})_\#\rho \in \Pi(\gamma_0, \gamma_1)$ provides the upper bound
\begin{equation*}
    W_q(\gamma_{\rho \to \mu_0}, \gamma_{\rho \to \mu_1}) \leq \|T_{\rho \to \mu_0} - T_{\rho \to \mu_1}\|_{L^q(\rho)}
\end{equation*}
so that in this case, map stability implies plan stability with fixed source, for the same constant and exponent.

Our third theorem gives a quantitative coercivity estimate for the transport problem. It says that for the transport problem between $\rho_0$ satisfying \ref{eq: assumption eqn} and any $\mu_0$, any transport plan $\gamma_1 \in \PP(\X \times \Y)$ (not necessarily optimal between its marginals) is quantitatively close to the optimal plan $\gamma_{\rho_0 \to \mu_0}$, under the assumption that $\gamma_1$ has similar marginals to $\rho_0$ and $\mu_0$ and realises a similar transport cost $\int \|x-y\|^2 \di \gamma_1$ to $\gamma_{\rho_0 \to \mu_0}$.

\begin{samepage}
\begin{theorem*}[Coercivity: Theorem~\ref{thrm: coercivity general marginals}, Proposition~\ref{prop: sharp exponent coercivity}]
    Let $\X$ and $\Y$ be compact subsets of $\R^d$ for $d \geq 2$, let $\rho_0$ be a probability measure on $\X$ satisfying \ref{eq: assumption eqn}, and fix any $\mu_0 \in \PP(\Y)$. Then for any $\gamma_1 \in \PP(\X \times \Y)$ with marginals $\rho_1 \in \PP(\X)$ and $\mu_1 \in \PP(\Y)$,
    \begin{equation}
         \quad W_2(\gamma_1, \gamma_{\rho_0 \to \mu_0}) \leq C \left( \left|\int \|x-y\|^2 \di \gamma_1 - W_2^2(\rho_0, \mu_0)\right| + W_1(\rho_0, \rho_1) + W_1(\mu_0, \mu_1)\right)^\frac{\theta}{2(1+\theta)}
    \end{equation}
    for an explicit constant $C(\X, \Y, M) >0$ given in the statement of Theorem \ref{thrm: coercivity general marginals}. Moreover, the bound is sharp in exponent for all such $\theta$.
\end{theorem*}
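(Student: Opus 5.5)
The plan is to reduce the statement to a one-sided estimate, for plans with first marginal $\rho_0$, on the Kantorovich duality gap of the optimal potentials. Let $\varphi$ be a Brenier potential for $(\rho_0,\mu_0)$: convex, Lipschitz on a neighbourhood of $\X$, with $\nabla\varphi(\X)\subset \operatorname{conv}\Y$. Let $\varphi^*$ be its Legendre transform and set
\[
g(x,y)=\varphi(x)+\varphi^*(y)-x\cdot y\geq 0 .
\]
By duality, $g=0$ $\gamma_{\rho_0\to\mu_0}$-a.e. Moreover $\tfrac12\|x-y\|^2$ differs from $g$ by $f(x)+h(y)$, where $f=\tfrac12|x|^2-\varphi$ and $h=\tfrac12|y|^2-\varphi^*$ are Lipschitz on $\X$ and $\Y$ with constants depending only on $\X,\Y$. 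Hence for $\gamma_1$ with marginals $\rho_1,\mu_1$,
\[
\int g\,\di\gamma_1 \leq \tfrac12\Big|\int\|x-y\|^2\di\gamma_1-W_2^2(\rho_0,\mu_0)\Big| + C\big(W_1(\rho_0,\rho_1)+W_1(\mu_0,\mu_1)\big)=:C\Delta ,
\]
since $\int (f\,\di\rho_0+h\,\di\mu_0)=\tfrac12W_2^2(\rho_0,\mu_0)$ and the integrals against $\rho_1,\mu_1$ change by at most Lipschitz constant times $W_1$.

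Next I replace $\gamma_1$ by a plan $\tilde\gamma$ with first marginal exactly $\rho_0$. To do this, glue $\gamma_1$ with an optimal $W_1$-coupling of $\rho_1$ and $\rho_0$, moving only the $x$-coordinate. Then $W_2(\gamma_1,\tilde\gamma)\leq W_2(\rho_0,\rho_1)\leq C\,W_1(\rho_0,\rho_1)^{1/2}$. This is harmless because $\theta/(2(1+\theta))\leq 1/4<1/2$ and $\Delta$ is bounded. Since $g$ is Lipschitz in $x$ (uniformly for $y\in\Y$), we also get $\delta:=\int g\,\di\tilde\gamma\leq C\Delta$. It then suffices to show
\[
W_2^2(\tilde\gamma,\gamma_{\rho_0\to\mu_0})\leq \int\|y-\nabla\varphi(x)\|^2\di\tilde\gamma(x,y)\leq C\delta^{\theta/(1+\theta)},
\]
where the first inequality uses the coupling $(x,y)\mapsto((x,y),(x,\nabla\varphi(x)))$.

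For the pointwise step, fix $(x,y)$ and $\eta>0$, and let $e$ be the unit vector along $y-\nabla\varphi(x)$. From $\varphi(x+\eta e)\geq \varphi(x)+y\cdot\eta e-g(x,y)$ and $\varphi(x-\eta e)\geq\varphi(x)-\nabla\varphi(x)\cdot\eta e$ we get
\[
\eta\,\|y-\nabla\varphi(x)\|\leq \varphi(x+\eta e)+\varphi(x-\eta e)-2\varphi(x)+g(x,y)\leq \eta\,\omega_\eta(x)+g(x,y),
\]
where $\omega_\eta(x)=\operatorname{diam}\partial\varphi(B_\eta(x))$. Combining this with the trivial bound $\|y-\nabla\varphi(x)\|\leq\operatorname{diam}(\operatorname{conv}\Y)$ and integrating against $\tilde\gamma$ gives
\[
\int\|y-\nabla\varphi\|^2\di\tilde\gamma\leq C\Big(\int\omega_\eta\,\di\rho_0+\delta/\eta\Big).
\]

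The main obstacle is the bound $\int\omega_\eta\,\di\rho_0\leq C M\eta^\theta$, which is where assumption \ref{eq: assumption eqn} enters. I would cover $\X$ by a grid of cubes $Q$ of side $\eta$ and enlarge each to $3Q$. The first ingredient is a BV-type estimate for monotone maps:
\[
\sum_Q\operatorname{diam}\partial\varphi(3Q)\leq C(\X,\Y)\,\eta^{-(d-1)} .
\]
I would prove this by slicing along lines parallel to the coordinate axes. On each slice the directional derivative of $\varphi$ is monotone with total variation at most $\operatorname{diam}\Y$, and there are about $\eta^{-(d-1)}$ relevant slices per direction, so summing the variations gives the bound. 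A transversal averaging step is needed here, since the oscillation of $\nabla\varphi$ over a cube should be controlled by the variation along lines through that cube, and this is the delicate part. The second ingredient is $\rho_0(3Q)\leq CM\eta^{d-1+\theta}$ from \ref{eq: assumption eqn}. Multiplying the two yields $\int\omega_\eta\,\di\rho_0\leq C\eta^\theta$.

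Finally I choose $\eta=\delta^{1/(1+\theta)}$. This gives $\int\|y-\nabla\varphi\|^2\di\tilde\gamma\leq C\delta^{\theta/(1+\theta)}$. Taking square roots and adding the gluing error $C\,W_1(\rho_0,\rho_1)^{1/2}\leq C\Delta^{\theta/(2(1+\theta))}$ proves the inequality. Sharpness would follow from the example of Proposition~\ref{prop: sharp exponent coercivity}.
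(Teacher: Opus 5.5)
Outside the regularity estimate, your argument is the paper's. Your $g$ is $G_\phi$, and $\tilde\gamma$ is the $(x_0,y_1)$-marginal of the coupling $\pi_\beta$ of \eqref{eq: pi beta defn} with $\beta$ optimal for $W_1(\rho_0,\rho_1)$. The bound $\delta\lesssim\Delta$ is Lemma~\ref{lem: direct control on the integral fenchel deficit by coupling} combined with Remark~\ref{rem: intro coercivity}, and your pointwise inequality is Lemma~\ref{lem: Fenchel bound distance}, proved by a second difference rather than monotonicity. The choice $\eta=\delta^{1/(1+\theta)}$ and the absorption of the $W_1^{1/2}$ gluing error follow the proof of Theorem~\ref{thrm: coercivity general marginals}. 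One normalisation point: take $\varphi$ as in \eqref{eq: double legendre trans eq}, finite on $\R^d$ with $\partial\varphi(\R^d)\subset\Y$. This makes $\varphi^*$ Lipschitz on $\Y$ and $\omega_\eta\leq D_\Y$ near $\partial\X$, with constants depending only on $\X,\Y$. ``Lipschitz on a neighbourhood of $\X$'' does not give this.

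The genuine gap is the bound $\int\omega_\eta\,\di\rho_0\lesssim M\eta^\theta$. This is the paper's main tool, Theorem~\ref{thrm: quant reg}, and you leave it as ``the delicate part''. Your reduction to $\sum_Q\diam\partial\varphi(3Q)\lesssim\eta^{1-d}$ is valid, but coordinate slicing as you describe it cannot prove that sum bound.

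First, along an $e_i$-line only $\partial_i\varphi$ is monotone. So slicing controls only the diagonal measures $\partial_{ii}\varphi$, after integrating over transversal variables. The other components can have arbitrarily large variation on a single $e_i$-line: take a maximum of affine functions whose gradients increase in the $e_i$-component and alternate in the $e_j$-component.

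Second, and more fundamentally, $\diam\partial\varphi(3Q)$ is a supremum of the full gradient over a $d$-dimensional cube. For a general $BV$ field with $d\geq 2$, such a supremum is not controlled by $\eta^{1-d}$ times the total variation on a larger cube; the indicator of a tiny ball is a counterexample. So your ``transversal averaging'' must use the monotonicity of $\partial\varphi$ in a genuinely $d$-dimensional way.

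The paper does this in three steps.
\begin{enumerate}
\item Integrating $\langle z'-z,y\rangle\leq\langle z'-z,y'\rangle$ over a half-ball gives $D_\eta\varphi(x)\lesssim\eta^{-d}\int_{B_{2\eta}(x)}\|\nabla\varphi-c\|$ (Lemma~\ref{lem: diam to avg int lemma}).
\item The $L^1$ Poincar\'e inequality turns this into $\eta^{1-d}\|D^2\varphi\|_{1,1}(B_{2\eta}(x))$.
\item Positive semidefiniteness, $|\partial_{ij}\varphi|\leq\frac12(\partial_{ii}\varphi+\partial_{jj}\varphi)$, gives $\|D^2\varphi\|_{1,1}\leq d\,\Delta\varphi$, which reduces everything to diagonal terms.
\end{enumerate}

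Only after these steps does your slicing help. The per-line variation bound then gives $\Delta\varphi(\Omega)\lesssim D_\Y\diam(\Omega)^{d-1}$ for the Laplacian measure. This is a legitimate substitute for the paper's mollification and integration-by-parts bound on $\Delta\phi(B_{R+2\eta})$. Summing over the boundedly overlapping cubes then replaces the paper's Fubini step.

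Finally, you only cite the sharpness claim. It requires the explicit construction of Proposition~\ref{prop: sharp exponent coercivity}.
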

\end{samepage}
The example given in Proposition \ref{prop: sharp exponent coercivity} is for fixed marginals, so that the exponent is still sharp even restricted to this case. In the text we state and prove two other coercivity theorems. Theorem \ref{thrm: coercivity rough measures} concerns transport problems where one marginal is close in Wasserstein distance to some regular measure, and coercivity holds up to an error quantified in terms of this distance. The second, Theorem \ref{thrm: map coercivity wrt dual suboptimality}, concerns the maps constructed out of the subgradients of nearly optimal potentials for the dual problem.

Our final theorem  gives a bound on the diameter of the set of optimal transport plans in terms of the distance of one marginal measure to a measure satisfying \ref{eq: assumption eqn}. This provides a quantification of the Brenier-McCann theorem, saying that if one marginal is close to regular then the set of optimisers is close to being a singleton, regardless of the target measure.  For $q \in [1,+\infty)$ and probability measures $\rho$ and $\mu$, denote the $W_q$ diameter of the optimal set by
\begin{equation*}
    \diam_{W_q}\Gamma(\rho,\mu)
:= \sup_{\gamma_0,\gamma_1 \in \Gamma(\rho,\mu)} W_q(\gamma_0,\gamma_1).
\end{equation*}
\begin{samepage}
\begin{theorem*}[Quantitative uniqueness: Theorem~\ref{thrm: quant unique general}, Proposition~\ref{prop: sharpness quant unique}]
Let $\X$ and $\Y$ be compact and convex subsets of $\R^d$ for $d\geq 2$, and let $\tilde\rho$ be a probability measure on $\X$ satisfying \ref{eq: assumption eqn}. Then for all $\rho\in\PP(\X)$ and $\mu\in\PP(\Y)$,
\begin{equation}
\label{eq: quant unique intro}
        \diam_{W_2}\Gamma(\rho,\mu)
        \leq C W_2(\rho,\tilde\rho)^{\frac{\theta}{2+\theta}},
\end{equation}
for an explicit constant $C(\X, \Y, M)>0$ given in the statement of Theorem \ref{thrm: quant unique general}. Moreover, the bound is sharp in exponent for all such $\theta$.
\end{theorem*}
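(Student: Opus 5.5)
The plan is to fix two optimal plans $\gamma_0,\gamma_1\in\Gamma(\rho,\mu)$ and couple them through the regular measure $\tilde\rho$. Write $W:=W_2(\rho,\tilde\rho)$ and let $\pi\in\Pi(\rho,\tilde\rho)$ be $W_2$-optimal. All optimal plans for $(\rho,\mu)$ are supported in the graph of $\partial u$ for one common Brenier potential $u$, since any dual optimiser is optimal against every primal optimiser. This $u$ is convex, and we may take it finite on $\R^d$ with $\partial u(\R^d)\subset \Y$.

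\textbf{Step 1: the coupling.} Disintegrate $\pi$ with respect to $\tilde x$. Draw $x_0,x_1$ conditionally independently from $\pi(\cdot\mid\tilde x)$, and then $y_i$ from $\gamma_i(\cdot\mid x_i)$. This gives a coupling of $\gamma_0$ and $\gamma_1$, so
\begin{equation*}
W_2^2(\gamma_0,\gamma_1)\le \mathbb E|x_0-x_1|^2+\mathbb E|y_0-y_1|^2 .
\end{equation*}
The first term is at most $4W^2$. For the second, fix a scale $\delta>0$ and split according to the event $G=\{|x_0-\tilde x|\le\delta,\ |x_1-\tilde x|\le\delta\}$.
\begin{itemize}
\item On $G$, both $y_i\in\partial u(x_i)\subset\partial u(B_\delta(\tilde x))$, so $|y_0-y_1|\le D_\delta(\tilde x):=\diam\,\partial u(B_\delta(\tilde x))$.
\item On the complement of $G$, we use $|y_0-y_1|\le\diam\Y$. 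By Chebyshev, $\mathbb P(G^c)\le 2W^2/\delta^2$.
\end{itemize}
Using $D_\delta\le\diam\Y$, this yields
\begin{equation*}
W_2^2(\gamma_0,\gamma_1)\le 4W^2+(\diam\Y)^2\frac{2W^2}{\delta^2}+\diam\Y\int D_\delta\,\di\tilde\rho .
\end{equation*}

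\textbf{Step 2: the key lemma.} I will prove that for every convex $u$ with $\partial u\subset\Y$,
\begin{equation*}
\int D_\delta(\tilde x)\,\di\tilde\rho(\tilde x)\le C(\X,\Y)\,M\,\delta^{\theta},
\end{equation*}
using that $\tilde\rho$ satisfies \ref{eq: assumption eqn}. Granting this, take $\delta=W^{2/(2+\theta)}$. Then $W^2/\delta^2=\delta^\theta=W^{2\theta/(2+\theta)}$, and $W^2\lesssim W^{2\theta/(2+\theta)}$ because $W\le\diam\X$. Taking square roots gives \eqref{eq: quant unique intro}. The same argument gives the $W_q$ version: replace squares by $q$-th powers and Chebyshev with exponent $q$, and use $D_\delta^q\le(\diam\Y)^{q-1}D_\delta$.

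\textbf{Step 3: proving the lemma (main obstacle).} I expect this step to be the hardest. I would cover $\X$ by a grid of cubes $Q$ of side $\delta$ and argue as follows.
\begin{itemize}
\item For $\tilde x\in Q$ we have $D_\delta(\tilde x)\le \operatorname{osc}_{3Q}\partial u$.
\item By \ref{eq: assumption eqn}, $\tilde\rho(Q)\lesssim M\delta^{d-1+\theta}$.
\item Therefore $\int D_\delta\,\di\tilde\rho\lesssim M\delta^{\theta}\sum_Q \delta^{d-1}\operatorname{osc}_{3Q}\partial u$.
\end{itemize}
The remaining sum should be controlled by an Alexandrov/monotone-operator estimate. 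For convex $u$ one has $\delta^{d-1}\operatorname{osc}_{3Q}\nabla u\lesssim \Delta u(5Q)$, with $\Delta u$ the Laplacian measure. Bounded overlap of the cubes $5Q$ then gives a bound by $\Delta u(\X_\delta)$, where $\X_\delta$ is a $\delta$-neighbourhood of $\X$. Finally, the divergence theorem bounds $\Delta u(\X_\delta)\le \diam\Y\cdot\mathcal H^{d-1}(\partial\X_\delta)$, which is bounded in terms of $\X$ because $\X$ is convex.

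The delicate points are the oscillation-versus-Laplacian-mass inequality for monotone gradients, and the handling of cubes meeting $\partial\X$. This is where convexity of $\X$, and of $\Y$ for the extension of $u$, is used. If the pointwise cube estimate proves awkward, my fallback is a one-dimensional slicing argument. Along each line in a coordinate direction, $\partial_e u$ is monotone with total variation at most $\diam\Y$, so only $\lesssim \diam\Y/t$ intervals of length $\delta$ carry a jump of size $\ge t$. A layer-cake integration in $t$, combined with the Ahlfors bound on $\delta$-slabs, then recovers the same $\delta^\theta$ estimate.
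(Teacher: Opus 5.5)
Your upper-bound argument is correct and is essentially the paper's proof of Theorem~\ref{thrm: quant unique general} with $q=r=2$. You couple the two optimal plans through an optimal coupling with $\tilde\rho$, split on a Chebyshev event at scale $\delta$, bound the good part by $\int D_\delta\,\di\tilde\rho\lesssim M\delta^\theta$ (Theorem~\ref{thrm: quant reg}), and balance.

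There are two minor differences. The paper couples $\rho$ with itself by the identity ($x_0=x_1$), so there is no $x$-displacement term and only one tail event. Where you cover by cubes, the paper uses Fubini, $\int\Delta\phi(B_{2\eta}(x))\,\di\tilde\rho(x)=\int\tilde\rho(B_{2\eta}(z))\,\di\Delta\phi(z)$. Both routes work.

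The estimate $\delta^{d-1}\operatorname{osc}_{3Q}\partial u\lesssim\Delta u(5Q)$, on which everything rests, is only asserted in your proposal. The paper proves it in Lemma~\ref{lem: diam to avg int lemma} and \eqref{eq: local Hessian measure bound}, in three steps:
\begin{enumerate}
\item Integrate monotonicity over a half ball of radius $\delta$ to bound $\|y-c\|$, for $y\in\partial u(z)$, by $\delta^{-d}\int\|\nabla u-c\|$.
\item Apply the $L^1$ Poincar\'e inequality for $BV$ functions on a convex domain.
\item Use $\|D^2u\|_{1,1}\le d\,\Delta u$.
\end{enumerate}
This gives your $5Q$ version directly, since $B_\delta(z)\subset 5Q$ for $z\in 3Q$.

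Your slicing fallback would not replace this step. The oscillation of one partial derivative along one line does not bound $\diam\partial u(\overline B_\delta(\tilde x))$. Moreover, a Chebyshev count of jumps $\ge t$ at each level, integrated in $t$, only yields $\delta^\theta\log(1/\delta)$.

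The genuine gap is the second half of the statement. The exponent $\theta/(2+\theta)$ is claimed to be sharp for every $\theta\in(0,1]$, and you give no example. You need a family with $W_2(\rho_\varepsilon,\tilde\rho)\lesssim\varepsilon^{1+\theta/2}$ but $\diam_{W_2}\Gamma(\rho_\varepsilon,\mu_\varepsilon)\gtrsim\varepsilon^{\theta/2}=(\varepsilon^{1+\theta/2})^{\theta/(2+\theta)}$. The missing idea is to collapse an $\varepsilon$-neighbourhood of the singular set of a Brenier potential onto that set, where the subdifferential is multivalued.

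In Proposition~\ref{prop: sharpness quant unique} the construction is as follows:
\begin{itemize}
\item $\tilde\rho$ has density $\theta 2^{-d}|x_1|^{\theta-1}$ on $[-1,1]^d$, which satisfies $A(M,\theta)$ with $M=\beta_{d-1}2^{1-d}$.
\item $\rho_\varepsilon$ is obtained by projecting the slab $\{|x_1|\le\varepsilon\}$ onto $\{x_1=0\}$. This creates mass $\varepsilon^\theta$ on the hyperplane at cost $W_2^2(\rho_\varepsilon,\tilde\rho)\le\frac{\theta}{2+\theta}\varepsilon^{2+\theta}$.
\item $\mu_\varepsilon$ consists of Dirac masses $\frac{1-\varepsilon^\theta}{2}$ at $\pm e_1$ and $\frac{\varepsilon^\theta}{2}$ at $\pm\frac12 e_1$.
\end{itemize}
Send $\{x_1>0\}$ to $e_1$ and $\{x_1<0\}$ to $-e_1$. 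On the hyperplane, send the mass with $x_2>0$ to $\frac12 e_1$ and the mass with $x_2<0$ to $-\frac12 e_1$, or use the reverse assignment. This gives two plans supported in $\graph(\partial|x_1|)$, so both are optimal. Mass $\varepsilon^\theta/4$ of one plan lies at distance at least $\frac12$ from the support of the other. Hence their $W_2$ distance is at least $\frac18\varepsilon^{\theta/2}$, and no exponent larger than $\theta/(2+\theta)$ is possible.
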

\end{samepage}

\subsection{Existing results}

Simple examples in \cite{ford2025quantitative} involving discrete measures show that without any regularity assumptions, problem \eqref{intro: quadratic OT problem} is not quantitatively stable. OT stability literature can be divided into two categories of assumptions: local stability in the neighbourhood of a transport problem with a regular map/potential, and local stability in the neighbourhood of a transport problem with at least one regular marginal measure. All stability results in this paper are of the second type.

For the first type of result, in \cite{gigli2011holder} it is proved that with fixed probability density $\rho$ and some target $\mu$, if $T_{\rho \to \mu}$ is Lipschitz, then OT maps are locally quantitatively stable in $L^2(\rho)$ with respect to $W_1$ perturbations of the target $\mu$. This is generalised to bi-marginal plan stability under the same assumption in \cite{li2021quantitative}. Assuming the transport from $\rho_0$ to $\mu_0$ has a class $\mathcal{C}^{1, 1}$ Brenier potential (see \eqref{eq: Brenier potential definition} below for the definition), Li and Nochetto establish the control
\begin{equation*}
\forall \rho_1 \in \PP(\X) \forall \mu_1 \in \PP(\Y), \quad W_2(\gamma_{\rho_0 \to \mu_0}, \gamma_{\rho_1 \to \mu_1}) \lesssim \Big(W_2(\rho_0, \rho_1) + W_2(\mu_0, \mu_1)\Big)^\frac{1}{2},
\end{equation*}
for a constant depending on $\X$, $\Y$, and the Lipschitz constant of $\nabla \phi_{\rho_0 \to \mu_0}$. We also mention \cite{gallouet2025strong} which adapts this assumption to a larger class of transport costs, as well as \cite{manole2024plugin} which derives statistical results under related regularity assumptions. Unfortunately, in applications, such regularity assumptions can be difficult to verify and need not hold. OT maps are often not even continuous, let alone Lipschitz, even between regular densities. 

Assuming instead that one marginal measure is regular is often more relevant to applications. Much progress has been made recently in this direction, in particular for stability of OT maps with a regular fixed source $\rho$, under perturbation of the target $\mu$. We mention \cite{berman2021convergence} for the first such result, followed by \cite{delalande2023quantitative, letrouit2024gluing} in which a fixed-source stability theory is developed directly for Brenier potentials -- and consequently maps -- using tools from functional inequalities. For $\X \subset \R^d$ a bounded Lipschitz domain and $\rho$ a probability density on $\X$ bounded above and below by positive constants, they prove that for all $\mu_0, \mu_1 \in \PP(\Y)$,
\begin{equation*}
     \|\phi_{\rho \to \mu_0} -\phi_{\rho \to \mu_1}\|_{L^2(\rho)} \lesssim W_1(\mu_0, \mu_1)^\frac{1}{2}\quad\; \text{ and } \quad\; \|T_{\rho \to \mu_0} - T_{\rho \to \mu_1}\|_{L^2(\rho)} \lesssim W_1(\mu_0, \mu_1)^\frac{1}{6},
\end{equation*}
where the potentials are normalised by $\int_\X\phi_{\rho\to\mu_i}\di\rho=0$, and the constants depend on $\X$, $\Y$ and the upper and lower density bounds for $\rho$. We refer the reader to the lecture notes \cite{letrouit2025lectures} for more details on these methods. The current state of the art papers \cite{merigot2026sharp, cazelles2026statistical} prove OT map and plan stability for a larger class of source measures. Notably, the support of the regular measure need not be connected and the density can vanish arbitrarily fast. In \cite{merigot2026sharp}, Mérigot proves that for $\rho$ a bounded, compactly supported probability density on $\R^d$,
\begin{equation*}
  \forall \mu_0, \mu_1 \in \PP(\Y), \quad \|T_{\rho \to \mu_0} - T_{\rho \to \mu_1}\|_{L^2(\rho)} \lesssim W_1(\mu_0, \mu_1)^\frac{1}{4},
\end{equation*} 
for a constant depending on $\Y$ and $\rho$. Due to the example in \cite{letrouit2026unstable}, the exponent $1/4$ is sharp, matching the special case of our Theorem~\ref{thrm: bi marginal map stability} and Proposition~\ref{prop: sharp exponent stability} when $\theta = 1$ and $q=2$. In \cite{cazelles2026statistical}, Cazelles, Pauwels and Portales prove that for $\rho_0$ a bounded probability density on $\X$, for all $\rho_1 \in \PP(\X)$ and all $\mu_0, \mu_1 \in \PP(\Y)$,
\begin{equation*}
W_2(\gamma_{\rho_0 \to \mu_0}, \gamma_{\rho_1 \to \mu_1}) \lesssim \Big( W_2(\rho_0, \rho_1) + W_2(\mu_0, \mu_1)\Big)^{\frac{1}{8}},
\end{equation*}
for a constant depending on $\diam(\X)$, $\diam(\Y)$ and $\rho_0$.
Here their exponent is not sharp. Using our Theorem~\ref{thrm: bi marginal plan stability} with $\theta = 1$ and $q=2$ improves their exponent from $1/8$ to $1/4$, since by Jensen's inequality $W_1 \leq W_2$. Our result is still not equal to the theoretical upper bound of $1/3$ given by Proposition~\ref{prop: sharp exponent stability}. The above results are consequences of the paper \cite{carlier2024pushforward} on the quantitative regularity of convex functions. Our techniques in this paper extend the quantitative regularity theory to allow for our new parameters $\theta$ and $q$.

Coercivity of problem \eqref{intro: quadratic OT problem} has also received some study. In \cite{li2021quantitative}, Li and Nochetto prove that any transport problem satisfying the same $\mathcal{C}^{1, 1}$ regularity assumption on a Brenier potential is coercive in a similar sense to our Theorem~\ref{thrm: coercivity general marginals} presented above.
In \cite{merigot2026sharp}, Mérigot proves two coercivity results for plans and maps with respect to both dual and primal suboptimality with fixed marginals. These form the basis for our Theorem \ref{thrm: map coercivity wrt dual suboptimality} and Theorem \ref{thrm: coercivity rough measures}. We also mention \cite{carlier2023fenchel} which proves coercivity of the distance of a plan to the subgradient of the Brenier potential with respect to the primal suboptimality gap, as well as \cite{carlier2024barycentres,ford2025quantitative} which prove a strong convexity inequality for the dual problem which acts as a dual coercivity result for Brenier potentials.

The current literature on uniqueness of the primal problem is, to the best of our knowledge, only qualitative. The foundational results of Brenier and McCann \cite{brenier1991polar, mccann1995existence} state that if $\rho$ vanishes on all sets of Hausdorff dimension $d-1$, then solutions to \eqref{intro: quadratic OT problem} are unique for any $\mu$, and the optimal plan is induced by a transport map. Our assumption \ref{eq: assumption eqn} can be in some sense seen as a quantitative strengthening of this Brenier-McCann hypothesis. The sharp hypotheses for qualitative uniqueness have since been identified as slightly weaker. To guarantee that the optimal transport is unique and map induced for any $\mu$, it is necessary and sufficient that $\rho$ vanishes on any hypersurface which can be locally written as the graph of a difference of convex functions; see \cite{gigli2011inverse,zajivcek1979differentiation}. For the dual OT problem, quantitative uniqueness has been recently investigated by the author, in \cite{ford2026quantitative}. Here almost connectedness of one marginal measure's support implies a quantitative bound on the diameter of the set of optimal dual potentials.

\subsection{Idea of the proofs}

Since
\begin{equation*}
    \frac{1}{2}W_2^2(\rho, \mu) = \frac{1}{2}\int \|x\|^2 \di \rho(x) + \frac{1}{2}\int\|y\|^2\di\mu(y) - \sup_{\gamma \in \Pi(\rho, \mu)} \int \langle x, y \rangle \di \gamma(x, y),
\end{equation*}
problem \eqref{intro: quadratic OT problem} is equivalent to the maximum correlation problem. Its dual formulation is
\begin{equation}
    \label{eq: Brenier potential definition}
    \sup_{\gamma \in \Pi(\rho, \mu)} \int \langle x, y \rangle \di \gamma = \inf_{\phi \text{ convex}} \int \phi(x) \di \rho(x) + \int \phi^*(y) \di \mu(y),
\end{equation}
where $\phi^*(y) =\sup_{x \in \R^d} \langle x, y \rangle - \phi(x)$ denotes the Legendre transform. We call any minimiser $\phi$ a Brenier potential. Primal and dual optimality imply that for $\phi$ a Brenier potential and $\gamma$ an optimal plan,
\begin{equation*}
    \spt\gamma \subset\graph(\partial\phi).
\end{equation*}
The main tool used for all the results in this paper is Theorem~\ref{thrm: quant reg}, which proves that for $q \geq 1$, a measure $\tilde \rho$ satisfying \ref{eq: assumption eqn}, and a convex Lipschitz function $\phi : \R^d \to \R$, 
\begin{equation}
    \label{eq: quant reg explainer simplified}
    \forall \eta\geq 0, \quad \int_\X \diam \partial\phi(\overline B_\eta(x))^q \di \tilde \rho(x) \lesssim \eta^\theta,
\end{equation}
where here
\begin{equation*}
    \partial\phi(\overline B_\eta(x)) = \bigcup_{z \in \overline B_\eta(x)}\partial \phi(z).
\end{equation*}
This extends the result of \cite{carlier2024pushforward}, which proves \eqref{eq: quant reg explainer simplified} when $q>1$ and $\rho$ is the Lebesgue measure and consequently for all bounded densities, i.e. $\rho$ satisfying \ref{eq: assumption eqn} with $\theta = 1$. This result should be seen as a ``quantitative regularity'' result for convex functions, with the integral quantity representing average local oscillation at scale $\eta$.

Once we have \eqref{eq: quant reg explainer simplified}, then the proof of Theorem~\ref{thrm: quant unique general}, quantitative uniqueness, is direct. We sketch the argument for the $W_1$ plan - $W_1$ marginal bound here. Fix any two optimal plans $\gamma_0, \gamma_1 \in \Gamma(\rho, \mu)$ and a Brenier potential $\phi$, and let $\alpha \in \Pi(\tilde \rho,\rho)$ be an optimal coupling for $W_1(\tilde \rho,\rho)$. Then, for any $\eta >0$, by choosing a coupling $\pi \in \Pi(\gamma_0, \gamma_1)$ which couples mass according to the identity for the first marginal $\rho$ of each plan (see \eqref{eq: pi beta defn} with $\beta \in \Pi(\rho, \rho)$ concentrated along the diagonal),
\begin{align*}
    W_1(\gamma_0, \gamma_1) \leq& \int_\X \diam \partial\phi(x) \di \rho(x) = \int_{\X^2} \diam \partial\phi(x) \di \alpha(\tilde x, x)\\
    \leq& \int_{\{\|\tilde x - x\| \leq \eta\}} \diam \partial\phi(\overline B_\eta(\tilde x)) \di \alpha(\tilde x, x)
    + D_\Y\alpha(\|x-\tilde x\|>\eta)\\
    \lesssim& \eta^\theta + \frac{W_1(\tilde \rho, \rho)}{\eta}.
\end{align*}
The bound follows by choosing $\eta = W_1(\tilde \rho, \rho)^{1/(1+\theta)}$.

The stability and coercivity theorems are less direct, as we must compare two plans which do not have the same marginals, and are not, in general, concentrated on the graph of the subgradient of the same convex function. For these bounds, we also couple transport plans in a manner which is optimal for the first marginal. The main term we are left to bound in this case is a quantitative measure of the suboptimality of one plan to being in the subgradient of the other problem's Brenier potential, of the form
\begin{equation*}
    \int_{\X \times \Y}G_{\phi_0}(x, y) \di \gamma_1(x, y),
\end{equation*}
where $G_\phi(x,y) = \phi(x) + \phi^*(y) - \langle x, y \rangle \geq 0$ is the slackness in the Fenchel--Young inequality at $(x, y)$ for a convex function $\phi$. In the coercivity theorem, this quantity is bounded by the difference in transport costs between $
\gamma_0$ and $\gamma_1$, plus the sum of $W_1$ differences between their marginals. In the stability theorem, this quantity is bounded directly by the $W_1$ differences between their marginals.

\section{Quantitative regularity: coupling transport plans}
\label{ch: quantitative regularity}
In the majority of settings considered in this paper, objects are invariant under translations of $\X$ and $\Y$, so we may assume without loss of generality that $0$ belongs to both sets. It will also cost us no loss of generality to assume they are convex since we never ask measures to have full support. We denote the radii and diameters of each set by
\begin{equation*}
    R_\X = \sup_{x \in \X} \|x\|, \quad R_\Y = \sup_{y \in \Y} \|y\|, \quad D_\X = \diam(\X), \quad D_\Y = \diam(\Y).
\end{equation*}
When $0 \in \X \cap \Y$, we have $R_\X \leq D_\X$ and $R_\Y \leq D_\Y$. In general, we will always assume that a pair of Brenier and dual $(\phi, \psi)$ potentials is of the form
    \begin{equation}
    \label{eq: double legendre trans eq}
        \phi(x) = \sup_{y \in \Y} \langle x, y \rangle  - \psi(y) \quad \text{ and } \quad \psi(y) = \sup_{x \in \X} \langle x, y \rangle - \phi(x),
    \end{equation}
rather than taking the Legendre transform with respect to all $\R^d$. Every optimal potential admits $\rho$ and $\mu$ a.e. representatives of this form; see \cite[Appendix B]{ford2026quantitative} for a rigorous justification of this assumption. Consequently, $\phi$ and $\psi$ are $R_\Y$ and $R_\X$ Lipschitz respectively, as envelopes of uniformly Lipschitz functions.

\subsection{An integral bound}
For a convex function $\phi:\R^d\to\R$ and $\eta\geq0$, denote
\begin{equation}
    D_\eta \phi(x):=\diam\partial\phi(\overline B_\eta(x)).
\end{equation}
The following theorem generalises \cite[Cor. 2.2]{carlier2024pushforward} to allow the exponent $q=1$ and measures satisfying \ref{eq: assumption eqn}. These bounds build on the previous works \cite{alberti1992singularities,alberti1994structure}, which study the size of singular sets of convex functions, rather than the average local oscillation quantity considered here. We first prove a lemma which gives an interior $L^\infty$ estimate for gradients of convex functions, as a consequence of monotonicity. Similar estimates exist in the literature, for example \cite{gutierrez2022estimates, bouchitte2007new}.
\begin{lemma}
\label{lem: diam to avg int lemma}
    Let $\phi : \R^d \to \R$ be a convex function. Then for any $x,c \in \R^d$ and any $\eta>0$,
    \begin{equation*}
        D_\eta \phi(x) \leq \frac{2(d+1)}{\beta_{d-1}\eta^d} \int_{B_{2\eta}(x)} \| \nabla \phi(z) - c\| \di z,
    \end{equation*}
    where $\beta_d := |B_1|$ is the volume of the $d$-dimensional unit ball. 
\end{lemma}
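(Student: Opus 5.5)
The plan is to reduce the diameter bound to a one-sided bound on a single subgradient, and then obtain that bound from monotonicity of $\partial\phi$ integrated over a half-ball. Since $\phi$ is convex and finite on $\R^d$, it is locally Lipschitz and differentiable Lebesgue-a.e., so $\nabla\phi$ is defined a.e. and the integral on the right-hand side makes sense.

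\textbf{Step 1 (reduction).} Take any $p\in\partial\phi(z)$ with $z\in\overline B_\eta(x)$. It suffices to show
\begin{equation*}
\|p-c\|\leq \frac{d+1}{\beta_{d-1}\eta^d}\int_{B_{2\eta}(x)}\|\nabla\phi(w)-c\|\di w .
\end{equation*}
Indeed, for $p\in\partial\phi(z)$ and $p'\in\partial\phi(z')$ with $z,z'\in\overline B_\eta(x)$, the triangle inequality $\|p-p'\|\leq\|p-c\|+\|p'-c\|$ then gives the factor $2$ in the statement. We may assume $p\neq c$ and set $e=(p-c)/\|p-c\|$.

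\textbf{Step 2 (monotonicity).} Let $w=z+su$ with $s\in(0,\eta)$, $u$ a unit vector, and $\nabla\phi(w)$ existing. Monotonicity of the subdifferential gives $\langle\nabla\phi(w)-p,\,w-z\rangle\geq0$, that is, $\langle\nabla\phi(w)-p,u\rangle\geq0$. Subtracting $c$ yields
\begin{equation*}
\langle \nabla\phi(w)-c,u\rangle\geq\langle p-c,u\rangle .
\end{equation*}
Hence, whenever $\langle e,u\rangle>0$,
\begin{equation*}
\|\nabla\phi(w)-c\|\geq \|p-c\|\,\langle e,u\rangle .
\end{equation*}

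\textbf{Step 3 (integration over a half-ball).} Let $H=\{w\in B_\eta(z):\langle w-z,e\rangle>0\}$. Since $\|z-x\|\leq\eta$, we have $H\subset B_{2\eta}(x)$. Integrating the inequality of Step 2 over $H$ in polar coordinates centred at $z$ gives
\begin{equation*}
\int_{B_{2\eta}(x)}\|\nabla\phi-c\|\di w\;\geq\;\|p-c\|\int_0^\eta s^{d-1}\di s\int_{\{u\in S^{d-1}:\langle u,e\rangle>0\}}\langle u,e\rangle\di\sigma(u).
\end{equation*}
The spherical integral is the $(d-1)$-dimensional volume of the orthogonal projection of the upper hemisphere onto $e^\perp$, which is $\beta_{d-1}$. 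The radial integral equals $\eta^d/d$. This gives the bound of Step 1 with constant $d/(\beta_{d-1}\eta^d)$, which is at least as good as the stated constant $(d+1)/(\beta_{d-1}\eta^d)$.

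\textbf{Main obstacle.} The only delicate point is the evaluation of $\int_{\text{hemisphere}}\langle u,e\rangle\di\sigma$. The divergence theorem, applied to the constant field $e$ on the half-ball, identifies it with the area of the flat face, which is $\beta_{d-1}$. Everything else is routine. If one preferred a solid cone in place of the half-ball, one would get a constant of the same order, possibly the stated $d+1$; any such choice suffices.
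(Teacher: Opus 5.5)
Your proof is correct and is essentially the paper's argument: you integrate monotonicity of $\partial\phi$ over the half-ball of radius $\eta$ centred at $z$ (which lies in $B_{2\eta}(x)$), then use the triangle inequality through $c$ to bound the diameter. The only difference is that you divide by $\|w-z\|$ before integrating, whereas the paper simply bounds $\|w-z\|\le\eta$ inside the integral, so you get the slightly sharper constant $d$ in place of $d+1$.
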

\begin{proof}
    Subtracting $\langle \cdot, c\rangle$ from $\phi$, it suffices to consider $c=0$. Choose $z \in \overline B_\eta(x)$ and $y \in \partial\phi(z)$. For all $z' \in B_\eta(z)$ and $y' \in \partial\phi(z')$, monotonicity implies
    \begin{equation*}
        \langle z' - z, y \rangle \leq \langle z'-z, y'\rangle \leq \eta \|y'\|.
    \end{equation*}
    Integrating in $z'$ over the half ball $A = \{ z' \in B_\eta(z) : \langle z' - z, y \rangle \geq 0 \} \subset B_{2\eta}(x)$ gives
    \begin{equation}
        \frac{\beta_{d-1}}{d+1} \eta^{d+1} \|y\| \leq \eta \int_{B_{2\eta}(x)} \| \nabla \phi(z')\| \di z'.
    \end{equation}
    Thus, for every $z \in \overline B_\eta(x)$ and $y \in \partial\phi(z)$,
    \begin{equation}
    \label{eq: L infinity control of convex functions}
        \|y\| \leq \frac{d+1}{\beta_{d-1} \eta^d} \int_{B_{2\eta}(x)} \| \nabla \phi(z')\| \di z'.
    \end{equation}
    Passing to the supremum over such $y$ gives the result. 
\end{proof}

\begin{theorem}
\label{thrm: quant reg}
Let $d \geq 2$, and let $\phi : \R^d \to \R$ be convex, with $\partial\phi(\R^d) \subset \Y$ for some compact convex $\Y\subset \R^d$. Let $R>0$, and $\tilde \rho \in \PP(\overline B_R)$ satisfy \ref{eq: assumption eqn}. Then, for every $q \in [1,\infty)$,
\begin{equation}
\label{eq: Lq quant reg}
    \forall \eta \geq 0, \qquad \int_{\overline B_R} D_\eta \phi(x)^q\di \tilde \rho(x)
    \leq c_d M R^{d-1+\theta} D_\Y^q\left(\frac{\eta}{R}\right)^{\theta},
\end{equation}
where
\begin{equation*}
    c_d=\frac{4}{3}\frac{6^d d^2(d+1)\beta_d}{\beta_{d-1}}.
\end{equation*}
\end{theorem}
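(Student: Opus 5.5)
The plan is to reduce to $q=1$ and to small $\eta$, and then to combine Lemma~\ref{lem: diam to avg int lemma} with a covering argument and a BV-type bound on $\nabla\phi$. Since $\partial\phi(\R^d)\subset\Y$, we always have $D_\eta\phi\le D_\Y$, so $D_\eta\phi^q\le D_\Y^{q-1}D_\eta\phi$. It therefore suffices to prove the case $q=1$ with right-hand side $c_d M R^{d-1+\theta}D_\Y(\eta/R)^\theta$. Applying \ref{eq: assumption eqn} to the ball $B_{2R}(0)\supset\spt\tilde\rho$ gives $1\le M(2R)^{d-1+\theta}$. Hence when $\eta\ge R$ the right-hand side already dominates $D_\Y^q\ge\int D_\eta\phi^q\,\di\tilde\rho$, because $c_d$ is much larger than $2^{d}$. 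So I would assume $0<\eta<R$; the case $\eta=0$ follows by monotonicity in $\eta$, or is handled directly by the same argument.

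\textbf{Covering step.} Tile $\overline B_R$ by cubes $Q_i$ of side $\eta/\sqrt d$ (or a comparable scale) with centres $x_i$, keeping only cubes meeting $\overline B_R$. For $x\in Q_i$ we have $\overline B_\eta(x)\subset\overline B_{2\eta}(x_i)$, so $D_\eta\phi(x)\le D_{2\eta}\phi(x_i)$. Lemma~\ref{lem: diam to avg int lemma} with an arbitrary constant vector $c_i$ then gives
\begin{equation*}
\int D_\eta\phi\,\di\tilde\rho\le\sum_i\tilde\rho(Q_i)\,\frac{2(d+1)}{\beta_{d-1}(2\eta)^d}\int_{B_{4\eta}(x_i)}\|\nabla\phi-c_i\|\,\di z .
\end{equation*}
Each $Q_i$ lies in a ball of radius about $\eta$, so \ref{eq: assumption eqn} gives $\tilde\rho(Q_i)\lesssim M\eta^{d-1+\theta}$. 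The prefactor is therefore $\lesssim M\eta^{\theta-1}$, and it remains to show
\begin{equation*}
\sum_i\inf_{c}\int_{B_{4\eta}(x_i)}\|\nabla\phi-c\|\,\di z\lesssim \eta R^{d-1}D_\Y .
\end{equation*}

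\textbf{BV step.} This is the main point and where I expect the most care is needed. Since $\phi$ is convex, $\nabla\phi\in BV_{loc}$ and $D^2\phi$ is a nonnegative matrix-valued measure, whose total variation is controlled by its trace, i.e.\ $|D^2\phi|\le\Delta\phi$ as measures. The BV Poincaré inequality on balls gives $\inf_c\int_{B_{4\eta}(x_i)}\|\nabla\phi-c\|\lesssim\eta\,\Delta\phi(B_{4\eta}(x_i))$. If Poincaré with measure Hessians feels delicate, I would first mollify $\phi$ (which preserves convexity and $\partial\phi\subset\Y$) and pass to the limit. The balls $B_{4\eta}(x_i)$ have overlap bounded by a dimensional constant (of order $6^d$-type, matching $c_d$) and all lie in $B_{R+4\eta}\subset B_{5R}$. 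Summing therefore gives $\lesssim\eta\,\Delta\phi(B_{5R})$.

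Finally, fix $c\in\Y$. Because $\nabla\phi-c$ takes values in $\Y-c$, the divergence theorem gives
\begin{equation*}
\Delta\phi(B_{5R})=\int_{\partial B_{5R}}\langle\nabla\phi-c,n\rangle\,\di\mathcal H^{d-1}\le D_\Y\,|\partial B_{5R}|\lesssim D_\Y R^{d-1}.
\end{equation*}
Combining the three steps yields $\int D_\eta\phi\,\di\tilde\rho\lesssim M\eta^{\theta-1}\cdot\eta R^{d-1}D_\Y=MR^{d-1+\theta}D_\Y(\eta/R)^\theta$, as required. The remaining work is bookkeeping of the constants: the cube scale, the overlap count and the Poincaré constant should be chosen so that they assemble into the stated $c_d$.
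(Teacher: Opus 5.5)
Your argument is correct up to the value of the constant. It rests on the same ingredients as the paper's proof: the reduction $D_\eta\phi^q\le D_\Y^{q-1}D_\eta\phi$, the trivial regime $\eta\ge R$, Lemma~\ref{lem: diam to avg int lemma}, the $BV$ Poincaré inequality on balls, $|D^2\phi|\lesssim\Delta\phi$, and the divergence bound $\Delta\phi(B_r)\lesssim D_\Y r^{d-1}$.

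The difference is how \ref{eq: assumption eqn} enters. You discretise with a grid of cubes, bound each $\tilde\rho(Q_i)$ separately, and then sum the Laplacian masses of the balls $B_{4\eta}(x_i)$ using a bounded-overlap count. The paper instead applies the lemma (with $c$ the mean of $\nabla\phi$ on $B_{2\eta}(x)$) and Poincaré at every point $x$. This gives the pointwise bound
\begin{equation*}
    D_\eta\phi(x)\le\frac{4d(d+1)}{\beta_{d-1}\eta^{d-1}}\Delta\phi(B_{2\eta}(x)).
\end{equation*}
The paper then integrates this against $\tilde\rho$ and exchanges the order of integration:
\begin{equation*}
    \int_{\overline B_R}\Delta\phi(B_{2\eta}(x))\di\tilde\rho(x)=\int_{B_{R+2\eta}}\tilde\rho\big(\overline B_R\cap B_{2\eta}(z)\big)\di\Delta\phi(z)\le M(2\eta)^{d-1+\theta}\,\Delta\phi(B_{R+2\eta}).
\end{equation*}
So \ref{eq: assumption eqn} is used once, on balls of radius $2\eta$, and no covering or overlap count is needed.

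This bears on your last sentence, which is the one real weak point: the constants will not simply ``assemble into the stated $c_d$''. In the paper, $c_d=2^{d+2}3^{d-1}d^2(d+1)\beta_d/\beta_{d-1}$. Here $2^{d}$ comes from $\tilde\rho(B_{2\eta}(z))\le M(2\eta)^{d-1+\theta}$ and $3^{d-1}$ from $\Delta\phi(B_{R+2\eta})\le d\beta_d D_\Y(3R)^{d-1}$; there is no overlap number in it.

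In your scheme, a point lies in roughly $\beta_d(4\sqrt d)^d$ of the balls $B_{4\eta}(x_i)$, which is not of $6^d$ type. More structurally, bounding each $\tilde\rho(Q_i)$ by $M(\text{circumradius})^{d-1+\theta}$ and then multiplying by an overlap count loses a factor relative to the Fubini exchange over the same balls. That factor is roughly $\beta_d d^{d/2}2^{-d}$, the ratio of the volume of a cube's circumscribed ball to that of the cube. Your enlarged radii add further factors; note also that cubes meeting $\overline B_R$ have centres up to $R+\eta/2$ from the origin, so the balls lie in $B_{R+9\eta/2}$, not $B_{R+4\eta}$.

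As written, you therefore prove the estimate with some dimensional constant $C_d$. That is enough for every qualitative use, but not the explicit $c_d$ asserted in the statement, which propagates into the constants of all the later theorems. To obtain $c_d$, drop the grid and use the Fubini exchange above. This is what your sum becomes if you exchange sum and integral instead of counting overlaps, apply the lemma at radius $\eta$ around each point, and let the cubes shrink.
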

\begin{proof}
If $D_\Y=0$, then $\phi$ is affine and the result is immediate. Otherwise, fix $x \in \R^d$ and $\eta>0$, and set
\begin{equation*}
    c_x = \frac{1}{|B_{2\eta}(x)|}\int_{B_{2\eta}(x)} \nabla \phi(z) \di z.
\end{equation*}
Applying Lemma~\ref{lem: diam to avg int lemma},
\begin{equation*}
    D_\eta \phi(x) \leq \frac{2(d+1)}{\beta_{d-1}\eta^d} \|\nabla\phi - c_x\|_{L^1(B_{2\eta}(x))}.
\end{equation*}
The $BV$ Poincaré inequality \cite[Theorem 3.44]{ambrosio2000functions}, applied to each component with the diameter bound and sharp universal coefficient $1/2$ \cite{acosta2004optimal}, gives
\begin{equation*}
    \|\nabla\phi - c_x\|_{L^1(B_{2\eta}(x))} \leq 2 \eta \|D^2\phi\|_{1, 1}(B_{2\eta}(x)),
\end{equation*}
where $D^2\phi$ (and later $\Delta \phi$) is a Radon measure and
\begin{equation*}
    A \in \R^{d \times d}, \quad\|A\|_{1, 1} = \sum_{i=1}^d \sum_{j=1}^d |A_{ij}|
\end{equation*}
denotes the entry-wise $l^1$ norm of a matrix. For a symmetric positive semi-definite matrix $A \in \R^{d \times d}$, by the inequality $|A_{ij}| \leq \frac{A_{ii}+A_{jj}}{2}$, it holds that $\|D^2\phi\|_{1,1} \leq d \Delta \phi$ in the sense of measures. Thus
\begin{equation}
\label{eq: local Hessian measure bound}
    D_\eta \phi(x) \leq \frac{4(d+1)}{\beta_{d-1} \eta^{d-1}}
     \|D^2\phi\|_{1, 1}(B_{2\eta}(x)) \leq \frac{4d(d+1)}{\beta_{d-1}\eta^{d-1}} \Delta\phi(B_{2\eta}(x)).
\end{equation}
Integrating \eqref{eq: local Hessian measure bound} in $x$ and using Fubini's theorem gives
\begin{align*}
    \int_{\overline B_{R}}D_\eta \phi(x)\di \tilde \rho(x) &\leq \frac{4d(d+1)}{\beta_{d-1}\eta^{d-1}} \int_{\overline B_{R}} \int_{B_{2\eta}(x)} \di \Delta\phi(z) \di \tilde \rho(x)\\
    &= \frac{4d(d+1)}{\beta_{d-1}\eta^{d-1}}\int_{B_{R+2\eta}} \tilde \rho(\overline B_{R} \cap B_{2\eta}(z)) \di \Delta \phi(z)\\
    &\leq \frac{2^{d+2}d(d+1)}{\beta_{d-1}} M\Delta \phi(B_{R+2\eta})\eta^\theta.
\end{align*}
Let $\chi \in \mathcal{C}^\infty_c(B_1)$ be a non-negative mollifier with $\int \chi = 1$, set $\chi_\varepsilon(x) = \varepsilon^{-d} \chi(x/\varepsilon)$, and define $\phi_\varepsilon = \phi * \chi_\varepsilon$. If $\phi$ is convex and $L$-Lipschitz, integration by parts yields
\begin{equation*}
    \Delta \phi_\varepsilon(B_{R+2\eta})\leq \lip(\phi_\varepsilon)\mathcal{H}^{d-1}(\partial B_{R+2\eta})
    \leq d\beta_d L(R+2\eta)^{d-1}.
\end{equation*}
Passing to the limit $\varepsilon \to 0$,
\begin{equation*}
    \Delta\phi(B_{R+ 2\eta}) \leq \liminf_{\varepsilon \to 0} \Delta \phi_\varepsilon(B_{R+2\eta}) \leq d\beta_d L(R+2\eta)^{d-1},
\end{equation*}
by the lower-semicontinuity of the measure of an open set for the convergence $\Delta \phi_\varepsilon \overset{\ast}{\rightharpoonup} \Delta \phi$. Noting that $\Delta\phi(A) = \Delta(\phi + \langle\cdot, c\rangle )(A)$ for any constant $c \in \R^d$, $L$ can be replaced by $D_\Y$. By assumption, for any $\eta>0$, $D_\eta \phi\leq D_\Y$, and hence
\begin{equation*}
    D_\eta \phi(x)^q\leq D_\Y^{q-1}D_\eta  \phi(x),
\end{equation*}
which proves that for all $\eta> 0$,
\begin{equation}
    \label{eq: local integral regularity}
    \int_{\overline B_{R}} D_\eta \phi(x)^q\di \tilde \rho(x)
    \leq 3^{1-d}c_d M (R+2 \eta)^{d-1} D_\Y^q\eta^{\theta}.
\end{equation}
If $\eta \leq R$, the stated bound is proven using $(R+ 2\eta) \leq 3R$; otherwise if $\eta \geq R$ the bound holds trivially since $1= \tilde \rho(\overline B_R) \leq M R^{d-1+ \theta}$ and $D_\eta \phi \leq D_\Y$ for all $\eta$. Finally, letting $\eta\downarrow0$ proves the case $\eta=0$.
\end{proof}

\begin{remark}[Sharpness of the exponent]
    The exponent of $\eta$ is sharp in general, as the following example demonstrates. Let $\phi(x) = |x_1|$ and $\rho(x) = c_\theta|x_1|^{\theta-1} \chi_{[-1, 1]^d}(x)$ for $c_\theta = \theta 2^{-d}$ a normalising constant. Then
    \begin{equation*}
        D_\eta \phi(x) = \begin{cases}
            2 & |x_1| \leq \eta\\
            0 & |x_1| > \eta,
        \end{cases}
    \end{equation*}
    so that for small $\eta$,
    \begin{equation*}
        \int_{[-1, 1]^d} D_\eta \phi (x)^q \di \rho(x) = 2^q \rho(|x_1| \leq \eta) = 2^q \eta^\theta.
    \end{equation*}
\end{remark}

\subsection{The Fenchel gap}

For $(x, y) \in \R^d \times \R^d$ and a convex function $\phi : \R^d \to \R$,  denote the remainder in the Fenchel-Young inequality by
\begin{equation*}
    G_\phi(x, y) = \phi(x) + \phi^*(y) - \langle x, y \rangle \geq 0,
\end{equation*}
where here $\phi^*(y)=\sup_{z\in\R^d}\langle z,y\rangle-\phi(z)$ is the convex conjugate. Equality $G_\phi(x, y) = 0$ is equivalent to the pair $(x, y)$ belonging to the graph of the subgradient of $\phi$, i.e. $y \in \partial \phi(x)$. In \cite[Lemma 1.1]{carlier2023fenchel}, Carlier proves a quantitative bound
\begin{equation*}
    G_\phi(x, y) \geq \frac{1}{2} \dist((x, y), \graph(\partial \phi))^2,
\end{equation*}
i.e. points which almost attain equality in Fenchel-Young are quantitatively close to  the set $\graph(\partial\phi)$.
The following lemma controls the distance of a specific $y$ to elements of $\partial \phi(x)$, rather than a pair $(x, y)$ to $\graph(\partial \phi)$.
\begin{lemma}
\label{lem: Fenchel bound distance}
    Let $\phi:\R^d \to \R$ be convex. Then for any $(x, y) \in \R^d \times \R^d$, any $g \in \partial\phi(x)$, and any $\eta>0$, 
    \begin{equation*}
        \|y-g\| \leq D_{\eta}\phi(x) + \frac{G_\phi(x, y)}{\eta}.
    \end{equation*}
\end{lemma}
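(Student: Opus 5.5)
The plan is to test the Fenchel–Young gap along a well-chosen direction. I use the lower bound $\phi(z) \ge \phi(x') + \langle g', z - x'\rangle$, with $x'$ a point on the sphere $\partial B_\eta(x)$ and $g'$ a subgradient there, and compare it with $\phi^*(y) \ge \langle z, y\rangle - \phi(z)$.

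If $y=g$ there is nothing to prove, so assume $y\neq g$ and set $e=(y-g)/\|y-g\|$ and $z=x+\eta e$. The conjugate gives $\phi^*(y)\geq \langle z,y\rangle-\phi(z)$, hence
\begin{equation*}
G_\phi(x,y)\geq \phi(x)-\phi(z)+\langle z-x,y\rangle = \phi(x)-\phi(z)+\eta\langle e,y\rangle .
\end{equation*}

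Next I bound $\phi(z)-\phi(x)$ from above using subgradients along the segment $[x,z]$. Pick $g'\in\partial\phi(z)$. Monotonicity-type convexity gives $\phi(x)\geq \phi(z)+\langle g',x-z\rangle$, so
\begin{equation*}
\phi(z)-\phi(x)\leq \langle g', z-x\rangle=\eta\langle e,g'\rangle .
\end{equation*}
Combining the two displays,
\begin{equation*}
G_\phi(x,y)\geq \eta\langle e,y-g'\rangle=\eta\big(\langle e,y-g\rangle+\langle e,g-g'\rangle\big)\geq \eta\|y-g\|-\eta\|g-g'\| .
\end{equation*}
Since $z\in\overline B_\eta(x)$, both $g\in\partial\phi(x)$ and $g'\in\partial\phi(z)$ lie in $\partial\phi(\overline B_\eta(x))$, so $\|g-g'\|\leq D_\eta\phi(x)$. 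Dividing by $\eta$ then yields the claim.

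I see no real obstacle. The only points to check are that the convexity inequality is used in the right direction (with the subgradient taken at $z$, not at $x$), and that $\partial\phi(z)$ is nonempty. The latter holds because $\phi$ is finite and convex on all of $\R^d$.
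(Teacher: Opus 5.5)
Your proof is correct and is essentially the paper's argument. The paper also tests at $x_1 = x + \eta\,(y-g)/\|y-g\|$ with $y_1 \in \partial\phi(x_1)$, and its inequality $G_\phi(x,y) \geq -G_\phi(x_1,y_1) - \langle x_1 - x, y_1 - y\rangle$ with $G_\phi(x_1,y_1)=0$ unpacks to exactly your conjugate bound at $z$ combined with the subgradient inequality at $z$; the only difference is that the paper phrases the second step as Fenchel--Young equality on the graph of $\partial\phi$.
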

\begin{proof}
    Write $(x_0,y_0)=(x,y)$. The conclusion is immediate if $G_\phi(x_0,y_0)=+\infty$. Otherwise, for any $(x_1,y_1)$ with $\phi^*(y_1)<+\infty$, Fenchel-Young inequalities give
    \begin{equation*}
        \phi(x_0) \geq \langle x_0, y_1\rangle - \phi^*(y_1), \quad \phi^*(y_0) \geq \langle x_1, y_0 \rangle - \phi(x_1),
    \end{equation*}
    so that
    \begin{align*}
        G_\phi(x_0, y_0) \geq& \langle x_0, y_1\rangle - \phi^*(y_1) +  \langle x_1, y_0 \rangle - \phi(x_1) - \langle x_0, y_0 \rangle \\
        =& - G_\phi(x_1, y_1) - \langle x_1 - x_0, y_1 - y_0 \rangle.
    \end{align*}
    In particular, when one takes $(x_1, y_1) \in \graph(\partial\phi)$, $G_\phi(x_1, y_1) = 0$ so that the first term on the right vanishes. Fix any $g_0 \in \partial \phi(x_0)$. If $y_0=g_0$, the result is immediate. Otherwise choose
    \begin{equation*}
        x_1 = x_0 + \eta\frac{y_0 - g_0}{\|y_0 - g_0\|},
    \end{equation*}
    and also choose any $y_1 \in \partial \phi(x_1)$. Then
    \begin{align*}
        G_\phi(x_0, y_0) \geq& \langle x_1 - x_0, y_0 - y_1 \rangle = \eta \Big\langle \frac{y_0 - g_0}{\|y_0 - g_0\|}, y_0 - g_0 + g_0- y_1 \Big\rangle\\
        \geq& \eta \left( \|y_0 - g_0\| - \|g_0 - y_1\|\right).
    \end{align*}
    Since $x_1 \in \overline B_\eta(x_0)$, rearranging,
    \begin{equation*}
        \|y_0 - g_0\| \leq D_\eta\phi(x_0) + \frac{G_\phi(x_0, y_0)}{\eta}.
    \end{equation*}
\end{proof}

\subsection{Coupling transport plans}

To compare probability measures $\gamma_0, \gamma_1 \in \PP(\R^d \times \R^d)$, we use the $q$-Wasserstein distance on $\R^{2d}$ with the norm
\begin{equation*}
   \|(x, y)\| = \left(\|x\|^q + \|y\|^q\right)^{\frac{1}{q}},
\end{equation*}
so that
\begin{equation*}
    W_q^q(\gamma_0, \gamma_1) = \inf_{\pi \in \Pi(\gamma_0, \gamma_1)} \int_{\R^{4d}} \|x_0 - x_1\|^q + \| y_0 - y_1\|^q \di \pi(x_0, y_0, x_1, y_1).
\end{equation*}
Since all norms on finite dimensional spaces are equivalent, this distance is equivalent to the Wasserstein distance induced by the Euclidean norm on $\R^{2d}$, up to multiplicative constants. All our bounds hold for any $q \in [1, \infty)$, with an exponent that degenerates as $q \to \infty$. The examples given in the sharpness section indicate that in general, one cannot hope for any of our bounds to hold for $q= \infty$.

For $\rho_0, \rho_1 \in \PP(\X)$ and $\mu_0, \mu_1 \in \PP(\Y)$, two plans $\gamma_i \in \Pi(\rho_i, \mu_i)$ and a coupling $\beta \in \Pi(\rho_0, \rho_1)$, we denote by $\pi_\beta \in \Pi(\gamma_0, \gamma_1)$ the coupling given by
\begin{equation}
    \label{eq: pi beta defn}
    \pi_\beta(\di x_0, \di y_0, \di x_1, \di y_1) = \beta(\di x_0, \di x_1) \gamma_0(\di y_0 \mid x_0) \gamma_1(\di y_1 \mid x_1),
\end{equation}
where $\gamma_i(\cdot \mid x_i)$ denotes the disintegration of $\gamma_i$ at $x_i$. We will use the coupling $\pi_\beta \in \Pi(\gamma_0, \gamma_1)$ as a competitor for the minimisation problem $W_q(\gamma_0, \gamma_1)$, controlling its transport cost in terms of our objects of interest. In particular,
\begin{align*}
    W_q^q(\gamma_0, \gamma_1) \leq& \int_{\R^{4d}} \|x_0 - x_1\|^q + \| y_0 - y_1\|^q \di \pi_\beta \\
    =& \int_{\R^{2d}}\|x_0 - x_1\|^q \di \beta + \int_{\R^{4d}}\| y_0 - y_1\|^q \di \pi_\beta.
\end{align*}
Choosing $\beta \in \Pi(\rho_0, \rho_1)$ optimal for some $r$-Wasserstein distance, the first term is easily controllable in terms of $W_r(\rho_0, \rho_1)$. The majority of our work will be spent controlling the second term. Observe that when $\rho= \rho_0 = \rho_1$ and the $\gamma_i$ are induced by maps $T_i$, then for $\beta$ the identity coupling,
\begin{equation*}
   \pi_\beta = (\id, T_0, \id, T_1)_\#\rho, \quad  \int_{\R^{4d}}\| y_0 - y_1\|^q \di \pi_\beta = \|T_0 - T_1\|_{L^q(\rho)}^q.
\end{equation*}
The below lemma provides the general framework for all the bounds in this paper. The first bound considers when both plans are concentrated on the same subgradient, and the second when only one is.
\begin{lemma}[Coupling transport plans]
    \label{lem: general coupling of plans}
    Let $q \in [1, +\infty)$, $\X, \Y \subset\R^d$ be compact and convex, and let $\phi : \R^d \to \R$ be a proper convex function with $\partial \phi(\R^d) \subset \Y$. Let $\tilde \rho, \rho_0, \rho_1 \in \PP(\X)$ and  $\mu_0, \mu_1 \in \PP(\Y)$, and suppose for $i=0, 1$ that $\gamma_i \in \Pi(\rho_i, \mu_i)$ satisfy $\spt \gamma_i \subset \graph(\partial\phi)$. Then for any $\alpha \in \Pi(\tilde \rho, \rho_0)$ and $\beta\in \Pi(\rho_0, \rho_1)$ and $\eta,s\geq0$,
    \begin{align*}
            \int_{(\X \times \Y)^2}\|y_0-y_1\|^q\di\pi_\beta  \leq& \int_{\X} D_{s+\eta} \phi(\tilde x)^q \di \tilde \rho(\tilde x)\\
                &+D_\Y^q\alpha(\|\tilde x-x_0\|>\eta) + D_\Y^q\beta(\|x_0-x_1\|>s).
    \end{align*}
    Alternatively, if we do not have the hypothesis $\spt \gamma_1 \subset \graph(\partial\phi)$, then for $s>0$ instead
    \begin{align*}
            \int_{(\X \times \Y)^2}\|y_0-y_1\|^q\di\pi_\beta  \leq& D_\Y^{q-1}\int_{\X} D_{s+\eta} \phi(\tilde x) \di \tilde \rho(\tilde x)\\
                &+ \frac{D_\Y^{q-1}}{s}\int_{(\X \times \Y)^2} G_\phi(x_0, y_1) \di \pi_\beta+D_\Y^q\alpha(\|\tilde x-x_0\|>\eta).
        \end{align*}
\end{lemma}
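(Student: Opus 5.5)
The plan is to build a single joint measure that carries $\tilde x$ together with $\pi_\beta$. Glue $\alpha\in\Pi(\tilde\rho,\rho_0)$ and $\pi_\beta$ along the common $x_0$-marginal $\rho_0$, which gives a probability measure $\Lambda(\di\tilde x,\di x_0,\di y_0,\di x_1,\di y_1)=\alpha(\di\tilde x\mid x_0)\,\pi_\beta(\di x_0,\di y_0,\di x_1,\di y_1)$. Its $(\tilde x)$-marginal is $\tilde\rho$, its $(\tilde x,x_0)$-marginal is $\alpha$, its $(x_0,x_1)$-marginal is $\beta$, and its $(x_i,y_i)$-marginals are $\gamma_i$. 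The integral $\int\|y_0-y_1\|^q\di\pi_\beta$ equals the same integral against $\Lambda$. Split the domain into the good set $E=\{\|\tilde x-x_0\|\le\eta,\ \|x_0-x_1\|\le s\}$ and its complement.

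On $E^c$, use $\|y_0-y_1\|\le D_\Y$ because $y_0,y_1\in\Y$. The $\Lambda$-mass of $E^c$ is at most $\alpha(\|\tilde x-x_0\|>\eta)+\beta(\|x_0-x_1\|>s)$, which gives the two error terms of the first bound. On $E$, $\Lambda$-a.e. we have $y_i\in\partial\phi(x_i)$, since $\spt\gamma_i\subset\graph(\partial\phi)$. Moreover $x_0,x_1\in\overline B_{s+\eta}(\tilde x)$, because $\|x_1-\tilde x\|\le s+\eta$ and $\|x_0-\tilde x\|\le\eta$. Hence $\|y_0-y_1\|\le D_{s+\eta}\phi(\tilde x)$. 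Integrating against $\Lambda$ and keeping only the $\tilde x$-marginal bounds this part by $\int D_{s+\eta}\phi(\tilde x)^q\di\tilde\rho$. A small measurability point arises here: $D_r\phi$ is upper semicontinuous (closed-graph property of $\partial\phi$), hence Borel. This proves the first inequality. If $\phi$ is only finite on a subset, one works where $\rho_i$ lives; this is harmless since $\partial\phi(\R^d)\subset\Y$ forces $\phi$ to be finite and Lipschitz.

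For the second bound we no longer know $y_1\in\partial\phi(x_1)$, so the plan is to compare $y_1$ with a subgradient at $x_0$ instead. First write $\|y_0-y_1\|^q\le D_\Y^{q-1}\|y_0-y_1\|$. Only the event $\{\|\tilde x-x_0\|>\eta\}$ is discarded, at cost $D_\Y^q\alpha(\|\tilde x-x_0\|>\eta)$; the distance between $x_0$ and $x_1$ is no longer truncated. On the remaining set, $y_0\in\partial\phi(x_0)$ holds a.e. Apply Lemma~\ref{lem: Fenchel bound distance} at the point $(x_0,y_1)$ with $g=y_0$ and radius $s$:
\begin{equation*}
\|y_1-y_0\|\le D_s\phi(x_0)+\frac{G_\phi(x_0,y_1)}{s}.
\end{equation*}
Since $\|x_0-\tilde x\|\le\eta$, we have $\overline B_s(x_0)\subset\overline B_{s+\eta}(\tilde x)$, and so $D_s\phi(x_0)\le D_{s+\eta}\phi(\tilde x)$.

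Multiply by $D_\Y^{q-1}$ and integrate against $\Lambda$. The first term is bounded by $D_\Y^{q-1}\int D_{s+\eta}\phi\di\tilde\rho$, using the $\tilde x$-marginal. The Fenchel term is nonnegative, so enlarging its domain back to everything gives $\frac{D_\Y^{q-1}}{s}\int G_\phi(x_0,y_1)\di\pi_\beta$. This yields the second inequality.

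The only delicate points are the gluing construction and measurability. The gluing is standard and is done via disintegration with respect to $x_0$. Measurability holds because $G_\phi$ is lower semicontinuous and $D_r\phi$ is upper semicontinuous, so I expect no real obstacle.
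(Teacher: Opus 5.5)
Your proposal is correct and is essentially the paper's argument. You discard the same two bad events, $\{\|\tilde x-x_0\|>\eta\}$ and $\{\|x_0-x_1\|>s\}$, at cost $D_\Y^q$ times their mass, handle the good set via $\overline B_s(x_0)\subset\overline B_{s+\eta}(\tilde x)$, and obtain the second bound from Lemma~\ref{lem: Fenchel bound distance} at $(x_0,y_1)$ with $g=y_0$.

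The only difference is bookkeeping. The paper partitions sequentially: it first reduces to $\int D_s\phi(x_0)^q\,\di\rho_0$, then transfers to $\tilde\rho$ through $\alpha$, applying the Fenchel lemma before rather than after discarding the $\alpha$-bad event. You instead make the gluing of $\alpha$ and $\pi_\beta$ explicit in $\Lambda$ and partition once.
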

\begin{proof}
    Partitioning, for $s \geq 0$, over $ \|x_0 - x_1\| >s$ and $\|x_0 - x_1\| \leq s$,
    \begin{equation*}
        \int_{(\X\times \Y)^2}\|y_0-y_1\|^q\di\pi_\beta \leq \int_{\X} D_s \phi(x_0)^q \di \rho_0(x_0) + D_\Y^q \beta(\|x_0 - x_1\|>s).
    \end{equation*}
    The integral term is controlled by partitioning for $\eta\geq 0$, on $\| \tilde x - x_0\| \leq \eta$ or $> \eta$,
    \begin{equation*}
        \int_{\X} D_s \phi(x_0)^q \di \rho_0(x_0) \leq \int_{\X} D_{s+\eta} \phi(\tilde x)^q \di \tilde \rho(\tilde x) + D_\Y^q \alpha(\|x_0 - \tilde x\|>\eta),
    \end{equation*}
    which completes the proof of the first statement. For the second, supposing instead we do not know that $\spt \gamma_1 \subset \graph(\partial \phi)$, we instead apply Lemma \ref{lem: Fenchel bound distance}, to deduce
    \begin{align*}
        \int_{(\X\times \Y)^2}\|y_0-y_1\|^q\di\pi_\beta \leq&
         D_\Y^{q-1} \left( \int_{(\X\times \Y)^2} D_s \phi(x_0)\di\pi_\beta + \frac{1}{s} \int_{(\X\times \Y)^2} G_{\phi}(x_0, y_1) \di \pi_\beta\right).
    \end{align*}
    The first term is controlled as above, giving
    \begin{align*}
        \int_{(\X\times \Y)^2} D_s \phi(x_0)\di\pi_\beta \leq \int_{\X} D_{s+\eta} \phi(\tilde x) \di \tilde \rho(\tilde x) + D_\Y \alpha(\|x_0 - \tilde x\|>\eta),
    \end{align*}
    which completes the proof.
\end{proof}

\section{Quantitative uniqueness: plans on a common subgradient}
\label{ch: quantitative uniqueness}

In this section we prove two theorems regarding the comparison of transport plans concentrated on the subgradient of the same convex function. The first, quantitative uniqueness, compares plans with the same marginals, when one marginal is close to regular. The second compares a plan with a regular first marginal to another plan with potentially different marginals. Although, by the triangle inequality, the second implies the first (with a larger constant), we prefer to prove the two results separately to isolate the proof mechanisms involved.

\subsection{Quantitative uniqueness}

\begin{samepage}
\begin{theorem}[Quantitative uniqueness]
\label{thrm: quant unique general}
Let $q\in[1,\infty)$ and $r \in [1, \infty]$, and let $\X$ and  $\Y$ be compact and convex subsets of $\R^d$ for $d\geq2$. Let $\tilde\rho$ be a probability measure on $\X$ satisfying \ref{eq: assumption eqn}. Then for all $\rho\in\PP(\X)$ and $\mu\in\PP(\Y)$,
\begin{equation}
\label{eq: quant unique}
        \diam_{W_q}\Gamma(\rho,\mu)
        \leq C W_r(\rho,\tilde\rho)^{\nu},
\end{equation}
where $\nu \in (0, 1]$ is given by
\begin{equation*}
   \nu = \begin{cases}
            \frac{ r\theta}{q(r+\theta)} & r<\infty\\
            \frac{\theta}{q}& r= \infty, 
        \end{cases}
\end{equation*}
and $C = D_\Y \left(c_d M D_\X^{d-1+\theta}+ 1\right)^{1/q}D_\X^{-\nu}>0$.
\end{theorem}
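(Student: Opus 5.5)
Fix $\gamma_0,\gamma_1\in\Gamma(\rho,\mu)$ and a Brenier potential $\phi$ in the form \eqref{eq: double legendre trans eq}, so that $\partial\phi(\R^d)\subset\Y$ (since $\Y$ is convex and compact) and $\spt\gamma_i\subset\graph(\partial\phi)$ for $i=0,1$. As in the translation-invariance discussion, I assume $0\in\X$, so $\X\subset\overline B_{R_\X}$ with $R_\X\le D_\X$, and $\tilde\rho\in\PP(\overline B_{D_\X})$.

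I will use the coupling $\pi_\beta$ with $\beta=(\id,\id)_\#\rho$ the diagonal coupling of $\rho$ with itself. This makes the $x$-part of the cost vanish, so
\begin{equation*}
W_q^q(\gamma_0,\gamma_1)\le \int\|y_0-y_1\|^q\di\pi_\beta .
\end{equation*}
I then apply the first statement of Lemma~\ref{lem: general coupling of plans} with $\rho_0=\rho_1=\rho$, $s=0$, and $\alpha\in\Pi(\tilde\rho,\rho)$ optimal for $W_r$. The $\beta$-term vanishes since $\beta(\|x_0-x_1\|>0)=0$. This gives
\begin{equation*}
W_q^q(\gamma_0,\gamma_1)\le \int_\X D_\eta\phi(\tilde x)^q\di\tilde\rho(\tilde x)+D_\Y^q\,\alpha(\|\tilde x-x\|>\eta).
\end{equation*}
Theorem~\ref{thrm: quant reg} with $R=D_\X$ bounds the first term by $c_dMD_\X^{d-1+\theta}D_\Y^q(\eta/D_\X)^\theta$.

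For $r<\infty$, Markov's inequality bounds the second term by $D_\Y^q W_r^r(\rho,\tilde\rho)/\eta^r$. For $r=\infty$, taking $\eta=W_\infty(\rho,\tilde\rho)$ makes it zero. In that case the bound is $c_dMD_\X^{d-1+\theta}D_\Y^q(W_\infty/D_\X)^\theta$, which after taking $q$-th roots gives exponent $\theta/q$ and a constant within the stated $C$.

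For $r<\infty$ I balance the two terms by choosing $\eta=D_\X^{\theta/(r+\theta)}W_r^{r/(r+\theta)}$, writing $W_r=W_r(\rho,\tilde\rho)$.
\begin{itemize}
\item First term: $(\eta/D_\X)^\theta=(W_r/D_\X)^{r\theta/(r+\theta)}$.
\item Second term: $W_r^r\eta^{-r}=W_r^{r\theta/(r+\theta)}D_\X^{-r\theta/(r+\theta)}$.
\end{itemize}
Both terms are then multiples of $(W_r/D_\X)^{q\nu}$, and summing gives
\begin{equation*}
W_q^q(\gamma_0,\gamma_1)\le D_\Y^q\bigl(c_dMD_\X^{d-1+\theta}+1\bigr)(W_r/D_\X)^{q\nu}.
\end{equation*}
Taking $q$-th roots and the supremum over $\gamma_0,\gamma_1$ yields \eqref{eq: quant unique} with the stated $C$.

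Each step is a direct plug-in of earlier results. The main points to check are:
\begin{itemize}
\item $\partial\phi(\R^d)\subset\Y$ for the chosen representative $\phi$, as justified by the convention \eqref{eq: double legendre trans eq}.
\item The $\eta$ in Theorem~\ref{thrm: quant reg} being allowed to exceed $D_\X$ (the theorem covers all $\eta\ge0$).
\item The degenerate case $W_r=0$, which is handled by letting $\eta\downarrow0$ and using the $\eta=0$ case of Theorem~\ref{thrm: quant reg}.
\end{itemize}
Finally, $\nu\le1$ holds since $r\theta\le q(r+\theta)$.
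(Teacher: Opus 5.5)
Your proposal is correct and follows the paper's proof essentially step for step. It uses the diagonal $\beta$ with $s=0$ in the first part of Lemma~\ref{lem: general coupling of plans}, $\alpha$ optimal for $W_r$, Theorem~\ref{thrm: quant reg} with Markov's inequality, and the same balancing choice $\eta=(W_r^rD_\X^\theta)^{1/(r+\theta)}$ (or $\eta=W_\infty$), which gives exactly the stated constant. The only cosmetic differences are that you keep $D_\eta\phi^q$ instead of bounding it by $D_\Y^{q-1}D_\eta\phi$, and that you handle $W_r=0$ through the $\eta=0$ case of Theorem~\ref{thrm: quant reg} rather than quoting Brenier--McCann; both variations are valid.
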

\end{samepage}
\begin{proof}
Fix two optimal plans $\gamma_0, \gamma_1 \in \Gamma(\rho, \mu)$. If $W_r(\rho, \tilde \rho)=0$ then $\rho = \tilde \rho$ and the result holds by the Brenier-McCann theorem, so assume that $W_r(\rho, \tilde \rho)>0$, with $\alpha$ the optimal coupling between them. Fix a Brenier potential $\phi$ for the transport from $\rho$ to $\mu$. We apply Lemma \ref{lem: general coupling of plans} with $\beta$ the identity coupling and $s=0$:
\begin{equation*}
    \int\|y_0-y_1\|^q\di\pi_\beta  \leq D_\Y^{q-1} \int_{\X} D_{\eta} \phi(\tilde x) \di \tilde \rho(\tilde x) +D_\Y^q\alpha(\|\tilde x-x_0\|>\eta).
\end{equation*}
We apply Theorem \ref{thrm: quant reg} and a Markov bound. For $\eta>0$,
\begin{equation*}
    \int\|y_0-y_1\|^q\di\pi_\beta \leq D_\Y^q \left( c_d M D_\X^{d-1+\theta} \left(\frac{\eta}{D_\X}\right)^\theta + \frac{W_r^r(\rho, \tilde \rho)}{\eta^r}\right).
\end{equation*}
We balance the terms containing $\eta$, choosing
\begin{equation*}
    \left(\frac{\eta}{D_\X}\right)^\theta = \frac{W_r^r(\rho, \tilde \rho)}{\eta^r}, \quad \text{ so that } \quad \eta = \left(W_r^r(\rho, \tilde \rho) D_\X^\theta\right)^{1/(r+\theta)}.
\end{equation*}
Consequently,
\begin{equation}
    \label{eq: uniqueness bound just before ry opt}
    \int\|y_0-y_1\|^q\di\pi_\beta \leq D_\Y^q \left(c_d M D_\X^{d-1+\theta}+ 1\right)D_\X^{-\frac{r\theta}{r+\theta}}W_r(\rho, \tilde \rho)^{\frac{r\theta}{r + \theta}}.
\end{equation}
This is the claimed bound. For $r=+\infty$, the proof is the same, choosing $\eta= W_\infty(\rho, \tilde \rho)$ without needing any Markov bound.
\end{proof}

\begin{remark}[Uniformity of constant in $q$ and $r$ and $\theta$]
    \label{rmk: uniformity of constant in q r theta}
    The statements of the theorems in the introduction claim the existence of a constant depending on only $\X$, $\Y$ and $M$, and a priori, the above constant
    \begin{equation*}
        C = D_\Y\left(c_d M D_\X^{d-1+\theta}+ 1\right)^{1/q}D_\X^{-\nu}
    \end{equation*}
    does not satisfy this. Since any constant which holds for all measures on $\X'$ also holds for all measures on $\X\subset \X'$, with $D_{\X} \leq D_\X'$, we can freely replace instances of $D_\X$ by $\max(1, D_\X)$; then $\max(1, D_\X)^{-\nu} \leq 1$ and $\max(1, D_\X)^{d-1+\theta} \leq \max(1, D_\X)^d$. Furthermore, since the expression inside the $q$th root is larger than $1$, we can bound this by the same term without the $q$th root. Thus a choice of constant depending only on $\X, \Y$ and $M$ is given by
    \begin{equation*}
        C = D_\Y(1 + c_d M \max(1, D_\X)^d).
    \end{equation*}
    The same principle applies to all the other theorems using similar calculations.
\end{remark}

\subsection{Plans on the same subgradient with different marginals}

The above result uses the $\alpha$ coupling in Lemma \ref{lem: general coupling of plans}. If instead we use the $\beta$ coupling, we can arrive at the following theorem of independent interest, which generalises a result of \cite{cazelles2026statistical} to compare transport plans in the same subgradient with different marginals. We improve their exponent and give a general $W_q-W_r$ bound rather than $W_2-W_2$. Our exponent is proven sharp in Proposition \ref{prop: sharpness quant unique}. We use this theorem in the Appendix to give an alternative proof of stability for the optimal transport plans.
\begin{samepage}
\begin{theorem}[Same subgradient plan control]
\label{thrm: same subgrad plan control}
    Let $q \geq1$ and $r \in [1, \infty]$, let $\X$ and $\Y$ be compact and convex subsets of $\R^d$ for $d\geq2$, and let $\rho_0$ be a probability measure on $\X$ satisfying \ref{eq: assumption eqn}. Let $\phi : \R^d \to \R$ be convex with $\partial \phi(\R^d) \subset \Y$, and set $\gamma_0= (\id, \nabla \phi)_\#\rho_0$. Then for any $\gamma_1 \in \PP(\X \times \Y)$ satisfying $\spt \gamma_1 \subset \graph(\partial \phi)$ with first marginal $\rho_1 \in \PP(\X)$,
    \begin{equation*}
        W_q(\gamma_0, \gamma_1) \leq  C W_r(\rho_0, \rho_1)^\nu,
    \end{equation*}
    where $\nu \in (0, 1]$ is given by
    \begin{equation*}
    \nu = \begin{cases}
                \frac{ r\theta}{q(r+\theta)} & r<\infty\\
                \frac{\theta}{q}& r= \infty, 
            \end{cases}
    \end{equation*}
    and $C^q= D_\X^{q(1-\nu)}+ D_\Y^q(1+ c_d M D_\X^{d-1+\theta}) D_\X^{-q\nu}$.
\end{theorem}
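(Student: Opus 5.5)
We compare $\gamma_0$ and $\gamma_1$ through the coupling $\pi_\beta\in\Pi(\gamma_0,\gamma_1)$ of \eqref{eq: pi beta defn}. Here $\beta\in\Pi(\rho_0,\rho_1)$ is chosen optimal for $W_r(\rho_0,\rho_1)$. Both plans are concentrated on $\graph(\partial\phi)$, so the first bound of Lemma~\ref{lem: general coupling of plans} applies. We take $\tilde\rho=\rho_0$, $\alpha$ the identity coupling of $\rho_0$ with itself, and $\eta=0$, so the $\alpha$-term vanishes. For every $s\geq0$ this gives
\begin{equation*}
    W_q^q(\gamma_0,\gamma_1)\leq \int\|x_0-x_1\|^q\di\beta + \int_\X D_s\phi(x_0)^q\di\rho_0(x_0) + D_\Y^q\,\beta(\|x_0-x_1\|>s).
\end{equation*}
This is the mirror image of the proof of Theorem~\ref{thrm: quant unique general}: there the displacement was carried by $\alpha$ and $\beta$ was trivial, and here the roles are swapped.

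Next we bound each term. For the middle term we use Theorem~\ref{thrm: quant reg} with $R=D_\X$, after translating so that $0\in\X$ and hence $\X\subset\overline B_{D_\X}$. This gives the bound $c_dMD_\X^{d-1+\theta}D_\Y^q(s/D_\X)^\theta$. For the last term, Markov's inequality gives $\beta(\|x_0-x_1\|>s)\leq W_r^r(\rho_0,\rho_1)/s^r$; when $r=\infty$ it vanishes for $s=W_\infty(\rho_0,\rho_1)$. We then choose $s$ exactly as in Theorem~\ref{thrm: quant unique general}, namely $s=(W_r^r D_\X^\theta)^{1/(r+\theta)}$. With this choice the last two terms together are at most
\begin{equation*}
D_\Y^q(1+c_dMD_\X^{d-1+\theta})D_\X^{-q\nu}W_r^{q\nu}.
\end{equation*}

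The step needing a little care is the first term $\int\|x_0-x_1\|^q\di\beta$, which does not appear in the uniqueness theorem. We must show it is at most $D_\X^{q(1-\nu)}W_r^{q\nu}$ so that it matches the stated constant. If $r\geq q$, then by Jensen the term is at most $W_r^q=W_r^{q\nu}W_r^{q(1-\nu)}\leq D_\X^{q(1-\nu)}W_r^{q\nu}$, using $W_r\leq D_\X$ and $\nu\leq1$. If $r<q$, we use $\|x_0-x_1\|^q\leq D_\X^{q-r}\|x_0-x_1\|^r$, so the term is at most $D_\X^{q-r}W_r^r$. Since $W_r/D_\X\leq1$ and $r\geq q\nu$ (because $r\theta/(r+\theta)\leq r$), we get $W_r^r\leq D_\X^{r-q\nu}W_r^{q\nu}$. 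Combining, the term is at most $D_\X^{q(1-\nu)}W_r^{q\nu}$.

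Summing the three contributions and taking $q$th roots gives $W_q(\gamma_0,\gamma_1)\leq CW_r(\rho_0,\rho_1)^\nu$ with $C^q$ as stated. The degenerate case $W_r(\rho_0,\rho_1)=0$ is immediate: then $\rho_1=\rho_0$, and since $\rho_0$ satisfies \ref{eq: assumption eqn}, $\nabla\phi$ is $\rho_0$-a.e. single valued, which forces $\gamma_1=\gamma_0$.
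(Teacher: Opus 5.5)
Your proposal is correct and follows the paper's own proof essentially step for step. It uses the same instance of Lemma~\ref{lem: general coupling of plans} (identity $\alpha$, $\eta=0$, $\beta$ optimal for $W_r$), then Theorem~\ref{thrm: quant reg} plus Markov with the same choice $s=(W_r^r D_\X^\theta)^{1/(r+\theta)}$, and the same two-case bound on $\int\|x_0-x_1\|^q\di\beta$; your explicit interpolation $W_r^r\leq D_\X^{r-q\nu}W_r^{q\nu}$ in the case $r<q$ spells out a step the paper leaves implicit.
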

\end{samepage}
\begin{proof}
    If $W_r(\rho_0, \rho_1)=0$, then $\rho_0=\rho_1$, and differentiability of $\phi$ $\rho_0$-almost everywhere implies $\gamma_0=\gamma_1$, so assume that $W_r(\rho_0, \rho_1)>0$.

    We apply Lemma~\ref{lem: general coupling of plans} with $\tilde \rho = \rho_0$, $\rho_0 = \rho_0$, $\rho_1 = \rho_1$, $\alpha\in \Pi(\rho_0, \rho_0)$ the identity plan, $\beta$ optimal for $W_r(\rho_0, \rho_1)$, and $\eta=0$. Then for all $s \geq 0$,
    \begin{align*}
        W_q^q(\gamma_0, \gamma_1) \leq& \int_{\X^2}\|x_0 -x_1\|^q \di \beta(x_0, x_1)\\ &+ \int_\X D_{s}^q \phi(x_0)\di \rho_0(x_0)
        + D_\Y^q\beta(\|x_0 - x_1\|> s).
    \end{align*}
    For the last two terms, Theorem~\ref{thrm: quant reg} and Markov's inequality give, for finite $r$ and $0<s$,
    \begin{align*}
        \int_\X D_{s} \phi(x_0)^q\di \rho_0(x_0)
        + D_\Y^q\beta(\|x_0 - x_1\|> s)\\
        \leq D_\Y^q \left(c_d M D_\X^{d-1}s^\theta + \frac{W_r^r(\rho_0, \rho_1)}{s^r}\right).
    \end{align*}
    As in the quantitative uniqueness theorem, we choose
    \begin{equation*}
        \left(\frac{s}{D_\X}\right)^\theta = \frac{W_r^r(\rho_0, \rho_1)}{s^r}, \quad \text{ so that } \quad s = \left(W_r^r(\rho_0, \rho_1) D_\X^\theta\right)^{1/(r+\theta)},
    \end{equation*}
    which, substituting, gives
    \begin{equation}
        \label{eq: same subgradient diff marg before beta}
        W_q^q(\gamma_0, \gamma_1) \leq \int\|x_0 -x_1\|^q \di \beta + D_\Y^q(1+ c_d M D_\X^{d-1+\theta}) D_\X^{-q\nu} W_r(\rho_0, \rho_1)^{q\nu}.
    \end{equation}
    The same bound holds in the case $r=\infty$ by directly choosing $s= W_\infty(\rho_0, \rho_1)$ without need of the Markov bound. To control the $\beta$ term,
    \begin{equation}
    \label{eq: bound on source coupling control q or r ordered}
        \int_{\X^2} \|x_0 - x_1\|^q \di \beta \leq \begin{cases}
            D_\X^{q-r}W_r^r(\rho_0, \rho_1) & 1 \leq r \leq q <\infty\\
            W_r^q(\rho_0, \rho_1) & 1 \leq q \leq r \leq \infty.
        \end{cases}
    \end{equation}
    Here the first case follows from $\|x_0 - x_1\| \leq D_\X$ and the second by Jensen's inequality if $r<\infty$ or $\beta$ almost everywhere if $r=\infty$. Thus for $q\geq 1$, $r \in [1, \infty]$,
    \begin{equation*}
        \int_{\X^2} \|x_0 - x_1\|^q \di \beta \leq D_\X^{q(1-\nu)} W_r(\rho_0, \rho_1)^{q\nu},
    \end{equation*}
    which, after combining with \eqref{eq: same subgradient diff marg before beta}, gives the result.
\end{proof}

\section{Coercivity and stability: plans close to a common subgradient}

In this section, we prove two coercivity theorems for plans, showcasing two different types of bound. In principle, one could create a combined coercivity theorem containing both, but for readability, we prefer to present each idea separately. We then prove a coercivity theorem for maps with respect to dual suboptimality, followed by the two main stability theorems for maps and plans.

\subsection{Coercivity for plans with arbitrary marginals}

We note that unlike every other theorem in this paper, the constants in the following theorem are not translation invariant with respect to the marginals, as the suboptimality gap between plans with different marginals is not translation invariant.
Indeed, for any $a \in \R^d$ and $\gamma_0, \gamma_1\in\PP(\X \times \Y)$, define
\begin{equation*}
    \gamma_i^a  = (x+ a, y)_\#\gamma_i \in \PP((\X + a) \times \Y).
\end{equation*}
Then
\begin{align*}
    \int_{(\X + a) \times \Y} \langle  x, y \rangle \di ( \gamma_1^a -  \gamma_0^a) =& \int_{\X \times \Y} \langle x + a, y \rangle \di (\gamma_1 - \gamma_0) \\ =& \int_{\X \times \Y} \langle x, y \rangle \di (\gamma_1 - \gamma_0) + \big\langle a, \int_\Y y\di (\mu_1 - \mu_0) \big \rangle, 
\end{align*} 
which can be made to take any value by varying $a$, if $\mu_1$ and $\mu_0$ do not have the same barycentre. Thus some constants here will be given in terms of $R_\X$ and $R_\Y$. We will control the mixed Fenchel deficit appearing in Lemma \ref{lem: general coupling of plans} using the following lemma.
\begin{lemma}
    \label{lem: direct control on the integral fenchel deficit by coupling}
    Let $\X$ and $\Y$ be compact convex subsets of $\R^d$ for $d\geq 2$, and let $\rho_0, \rho_1 \in \PP(\X)$ and $\mu_0, \mu_1 \in \PP(\Y)$. Fix $\phi$ a Brenier potential between $\rho_0$ and $\mu_0$. Then for any $\gamma_0 \in \Gamma(\rho_0, \mu_0)$ optimal and any $\gamma_1 \in \Pi(\rho_1, \mu_1)$,
    \begin{align*}
        \int_{(\X \times \Y)^2} G_\phi(x_0, y_1) \di \pi_\beta \leq& \left|\int_{\X \times \Y} \langle x, y \rangle \di (\gamma_{0} - \gamma_1)\right| + R_\Y W_1(\rho_0, \rho_1) + R_\X W_1(\mu_0, \mu_1),
    \end{align*}
    where $\pi_\beta$ is the coupling defined in \eqref{eq: pi beta defn}, with $\beta$ chosen optimal for $W_1(\rho_0, \rho_1)$.
\end{lemma}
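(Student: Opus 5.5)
The plan is to expand the integrand of $G_\phi(x_0,y_1)$ directly and to integrate each piece against $\pi_\beta$, using only the marginals of $\pi_\beta$ together with the duality relation for the pair $(\phi,\psi)$. Take $(\phi,\psi)$ in the form \eqref{eq: double legendre trans eq}, so that $\phi$ is $R_\Y$-Lipschitz and $\psi$ is $R_\X$-Lipschitz.

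We may replace $\phi^*$ by $\psi$. Since $\partial\phi(\R^d)\subset\Y$ and $y_1\in\Y$, we have $\phi^*(y_1)\le\psi(y_1)$; indeed $\psi$ is a supremum over $\X$ only, but $\phi^*=\psi$ holds on $\Y$ in the relevant sense, and in any case the inequality in the right direction suffices. This gives
\begin{equation*}
G_\phi(x_0,y_1)\le \phi(x_0)+\psi(y_1)-\langle x_0,y_1\rangle .
\end{equation*}
Under $\pi_\beta$, the variable $x_0$ has law $\rho_0$ and $y_1$ has law $\mu_1$. Hence
\begin{equation*}
\int G_\phi(x_0,y_1)\di\pi_\beta\le \int\phi\di\rho_0+\int\psi\di\mu_1-\int\langle x_0,y_1\rangle\di\pi_\beta .
\end{equation*}

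Next we use optimality of $\gamma_0$. Duality gives $\int\phi\di\rho_0+\int\psi\di\mu_0=\int\langle x,y\rangle\di\gamma_0$. Substituting,
\begin{equation*}
\int G_\phi\di\pi_\beta\le \int\langle x,y\rangle\di\gamma_0+\int\psi\di(\mu_1-\mu_0)-\int\langle x_0,y_1\rangle\di\pi_\beta .
\end{equation*}
The middle term is at most $R_\X W_1(\mu_0,\mu_1)$ by Kantorovich--Rubinstein duality, since $\psi$ is $R_\X$-Lipschitz.

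For the last term, we split
\begin{equation*}
\langle x_0,y_1\rangle=\langle x_1,y_1\rangle+\langle x_0-x_1,y_1\rangle .
\end{equation*}
The $(x_1,y_1)$-marginal of $\pi_\beta$ is $\gamma_1$, so $\int\langle x_1,y_1\rangle\di\pi_\beta=\int\langle x,y\rangle\di\gamma_1$. For the remainder,
\begin{equation*}
\Big|\int\langle x_0-x_1,y_1\rangle\di\pi_\beta\Big|\le R_\Y\int\|x_0-x_1\|\di\beta=R_\Y W_1(\rho_0,\rho_1),
\end{equation*}
by the choice of $\beta$. Combining these bounds gives
\begin{equation*}
\int G_\phi\di\pi_\beta\le\int\langle x,y\rangle\di(\gamma_0-\gamma_1)+R_\Y W_1(\rho_0,\rho_1)+R_\X W_1(\mu_0,\mu_1),
\end{equation*}
and bounding the first term by its absolute value yields the claim.

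The main point needing care is the justification that $\phi^*(y)\le\psi(y)$ for $y\in\Y$, i.e.\ that the supremum defining $\phi^*$ over all of $\R^d$ is attained on $\X$ or bounded by $\psi$. Since $\psi$ is convex and $\phi$ is its transform over $\Y$, we have $\phi^*=\psi^{**}$ relative to $\Y$, and this is at most $\psi$ on $\Y$. If this step were delicate, one could simply use $\psi$ in place of $\phi^*$ throughout, since the nonnegative quantity used later only needs the inequality $G_\phi\le\phi+\psi-\langle\cdot,\cdot\rangle$.
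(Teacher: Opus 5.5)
Your proof is correct and follows the paper's argument essentially step for step: you expand $G_\phi$ against the marginals of $\pi_\beta$, use the duality identity for $\gamma_0$, split $\langle x_0,y_1\rangle=\langle x_1,y_1\rangle+\langle x_0-x_1,y_1\rangle$, and bound the two error terms by Cauchy--Schwarz and Kantorovich--Rubinstein. The point you flag as delicate is a one-line check: for $y\in\Y$ and any $x$, the definition of $\phi$ gives $\langle x,y\rangle-\phi(x)\le\psi(y)$, so $\phi^*\le\psi$ on $\Y$, and in fact $\phi^*=\psi$ there because the supremum defining $\phi^*$ runs over a larger set than the one defining $\psi$; this is why the paper simply writes $\psi=\phi^*$.
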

\begin{proof}
    Denote $\psi = \phi^*$. We have
    \begin{align*}
        \int_{(\X \times \Y)^2} G_\phi(x_0, y_1) \di \pi_\beta =& \int \phi \di \rho_0 + \int \psi \di \mu_1 - \int \langle x_0, y_1 \rangle \di \pi_\beta\\
        =& \int \phi \di \rho_0 + \int \psi \di \mu_0 - \int \langle x_1, y_1 \rangle \di \gamma_1 \\   
        &+ \int \langle x_1 - x_0, y_1 \rangle \di \pi_\beta + \int \psi \di (\mu_1 - \mu_0). 
    \end{align*}
    By optimality, $\int \phi \di \rho_0 + \int \psi \di \mu_0 = \int \langle x, y \rangle \di \gamma_{0}$. By the Cauchy-Schwarz and Kantorovich-Rubinstein inequalities,
    \begin{equation*}
        \int \langle x_1 - x_0, y_1 \rangle \di \pi_\beta \leq R_\Y \int \|x_0 - x_1 \| \di \beta, \qquad \int \psi \di (\mu_1 - \mu_0) \leq R_\X W_1(\mu_0, \mu_1),
    \end{equation*}
    which completes the proof.
\end{proof}

\begin{theorem}[Arbitrary marginal coercivity]
    \label{thrm: coercivity general marginals}
    Let $q \geq1$, and let $\X$ and $\Y$ be compact and convex subsets of $\R^d$ for $d \geq 2$. Let $\rho_0$ be a probability measure on $\X$ satisfying \ref{eq: assumption eqn}, let $\mu_0 \in \PP(\Y)$, and denote by $\gamma_0$ the unique optimal plan between them. Then for any $\rho_1 \in \PP(\X)$, $\mu_1 \in \PP(\Y)$ and any plan $\gamma_1 \in \Pi(\rho_1, \mu_1)$,
    \begin{equation*}
        W_q(\gamma_0, \gamma_1) \leq C \left(  \left|\int_{\X \times \Y} \langle x, y \rangle \di (\gamma_0 - \gamma_1)\right|+ R_\Y W_1(\rho_0, \rho_1) + R_\X W_1(\mu_0, \mu_1) \right)^\frac{\theta}{q(1+\theta)}
    \end{equation*}
    where $C^q = D_\X^{q-1 +1/(1+\theta)}R_\Y^{-\theta/(1+\theta)}+ D_\Y^q(1 + c_d M R_\X^{d-1+ \theta}) (D_\Y R_\X)^{- \theta/((1+\theta))}$.
\end{theorem}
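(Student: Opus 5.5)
We combine the coupling framework of Lemma~\ref{lem: general coupling of plans} with the Fenchel-deficit control of Lemma~\ref{lem: direct control on the integral fenchel deficit by coupling}. Fix a Brenier potential $\phi$ between $\rho_0$ and $\mu_0$ of the form \eqref{eq: double legendre trans eq}, so that $\partial\phi(\R^d)\subset\Y$ and $\spt\gamma_0\subset\graph(\partial\phi)$. Since $\rho_0$ satisfies \ref{eq: assumption eqn}, it is absolutely continuous with respect to Hausdorff measure of dimension $>d-1$. Hence $\phi$ is differentiable $\rho_0$-a.e. and $\gamma_0=(\id,\nabla\phi)_\#\rho_0$ is the unique optimal plan. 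Let $\beta\in\Pi(\rho_0,\rho_1)$ be optimal for $W_1$ and use the competitor $\pi_\beta\in\Pi(\gamma_0,\gamma_1)$ from \eqref{eq: pi beta defn}. This gives
\begin{equation*}
W_q^q(\gamma_0,\gamma_1)\leq \int\|x_0-x_1\|^q\di\beta+\int\|y_0-y_1\|^q\di\pi_\beta .
\end{equation*}
The first term is at most $D_\X^{q-1}W_1(\rho_0,\rho_1)$, because $\|x_0-x_1\|\leq D_\X$.

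For the second term we apply the second statement of Lemma~\ref{lem: general coupling of plans}, since $\gamma_1$ need not lie in $\graph(\partial\phi)$. We take $\tilde\rho=\rho_0$, $\alpha$ the identity coupling and $\eta=0$, so the $\alpha$ term vanishes. For $s>0$ this yields
\begin{equation*}
\int\|y_0-y_1\|^q\di\pi_\beta\leq D_\Y^{q-1}\int_\X D_s\phi\di\rho_0+\frac{D_\Y^{q-1}}{s}\int G_\phi(x_0,y_1)\di\pi_\beta .
\end{equation*}
Theorem~\ref{thrm: quant reg}, applied with $R=R_\X$ (so that $\X\subset\overline B_{R_\X}$ once $0\in\X$) and exponent $1$, bounds the first integral by $c_dMR_\X^{d-1+\theta}D_\Y(s/R_\X)^\theta$. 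Lemma~\ref{lem: direct control on the integral fenchel deficit by coupling} bounds the Fenchel integral by the quantity
\begin{equation*}
\varepsilon:=\left|\int\langle x,y\rangle\di(\gamma_0-\gamma_1)\right|+R_\Y W_1(\rho_0,\rho_1)+R_\X W_1(\mu_0,\mu_1).
\end{equation*}
It remains to optimise in $s$. We have
\begin{equation*}
\int\|y_0-y_1\|^q\di\pi_\beta\leq D_\Y^{q}\Big(c_dMR_\X^{d-1+\theta}(s/R_\X)^\theta+\frac{\varepsilon}{D_\Y s}\Big),
\end{equation*}
so we balance with $s^{1+\theta}=\varepsilon R_\X^\theta/D_\Y$. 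This gives a bound
\begin{equation*}
D_\Y^q(1+c_dMR_\X^{d-1+\theta})(D_\Y R_\X)^{-\theta/(1+\theta)}\varepsilon^{\theta/(1+\theta)},
\end{equation*}
which matches the second summand of $C^q$.

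The $\beta$ term also has to be converted into a power $\varepsilon^{\theta/(1+\theta)}$. We use $W_1(\rho_0,\rho_1)\leq\varepsilon/R_\Y$ together with $W_1(\rho_0,\rho_1)\leq D_\X$, and write
\begin{equation*}
W_1=W_1^{\theta/(1+\theta)}W_1^{1/(1+\theta)}\leq (\varepsilon/R_\Y)^{\theta/(1+\theta)}D_\X^{1/(1+\theta)}.
\end{equation*}
This produces the first summand $D_\X^{q-1+1/(1+\theta)}R_\Y^{-\theta/(1+\theta)}$. Taking $q$th roots then gives the exponent $\theta/(q(1+\theta))$. The edge case $\varepsilon=0$ is handled by letting $s\downarrow0$. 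If $R_\Y=0$ or $R_\X=0$ the constant degenerates, and we treat that case trivially.

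The main obstacle is the bookkeeping in the optimisation: the constant must come out exactly in the stated form, in terms of $R_\X$ rather than $D_\X$. This works because Theorem~\ref{thrm: quant reg} is applied on $\overline B_{R_\X}$, which is why this theorem's constants are not translation invariant. One also has to check that the regime $s>R_\X$ in Theorem~\ref{thrm: quant reg} is harmless, which it is since that theorem holds for all $\eta\geq0$.
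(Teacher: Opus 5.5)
Your proposal is correct and follows the paper's proof essentially step for step: the same $W_1$-optimal coupling $\pi_\beta$ with $\alpha$ the identity, the same Fenchel-gap bound combined with Theorem~\ref{thrm: quant reg} at radius $R_\X$, the same balancing $s^{1+\theta}=\varepsilon R_\X^\theta/D_\Y$, and the same absorption of $D_\X^{q-1}W_1(\rho_0,\rho_1)$. The only difference is that you insert the bound from Lemma~\ref{lem: direct control on the integral fenchel deficit by coupling} before optimising in $s$ rather than after, which is immaterial; note also that $\X\subset\overline B_{R_\X}$ holds by the definition of $R_\X$ whether or not $0\in\X$, which matters because this theorem is not translation invariant.
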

\begin{proof}
    Let $\phi$ be a Brenier potential for the transport between $\rho_0$ and $\mu_0$. We apply Lemma~\ref{lem: general coupling of plans} with $\tilde \rho = \rho_0$, $\rho_0 = \rho_0$, $\rho_1 = \rho_1$, $\alpha$ the identity coupling, and $\beta \in \Pi(\rho_0, \rho_1)$ optimal for $W_1$. Then
    \begin{align*}
        W_q^q(\gamma_0, \gamma_1) \leq& \int_{\X^2} \|x_0 - x_1\|^q \di \beta + \int_{(\X\times \Y)^2} \|y_0 - y_1\|^q \di \pi_\beta\\
        \leq& D_\X^{q-1}W_1(\rho_0, \rho_1) + D_\Y^{q-1}\int_{(\X \times \Y)^2} \|y_0 - y_1\| \di \pi_\beta.
    \end{align*}
    Applying Lemma \ref{lem: Fenchel bound distance} with $x=x_0$, $y=y_1$ and $g=y_0$, followed by Theorem \ref{thrm: quant reg}, for $s>0$,
    \begin{align}
        \int_{(\X \times \Y)^2} \|y_0 - y_1\| \di \pi_\beta \leq D_\Y c_d M R_\X^{d-1} s^\theta + \frac{\Delta}{s},
    \end{align}
    where
    \begin{equation*}
        \Delta = \int G_\phi(x_0, y_1) \di \pi_\beta.
    \end{equation*}
    If $\Delta =0$, passing $s \downarrow 0$ gives the result. Otherwise for $\Delta>0$ we balance the terms in $s$ on the unitless length scale, choosing
    \begin{equation*}
        \left(\frac{s}{R_\X}\right)^\theta = \frac{\Delta}{D_\Y s} \implies s^{\theta+1} = \frac{\Delta R_\X^{\theta}}{D_\Y},
    \end{equation*}
    so that 
    \begin{equation*}
        \frac{\Delta}{s} = R_\X^{-\theta/(1+\theta)}D_\Y^{1/(1+\theta)} \Delta^{\theta/(1+\theta)}, \qquad R_\X^{d-1}s^\theta = R_\X^{d-1 + \theta} R_\X^{-\theta/(1+\theta)} D_\Y^{-\theta/(1+\theta)} \Delta^{\theta/(1+ \theta)}.
    \end{equation*}
    Substituting gives
    \begin{equation*}
        \int_{(\X \times \Y)^2} \|y_0 - y_1\|^q  \di \pi_\beta \leq D_\Y^{q}(1 + c_d M R_\X^{d-1+ \theta}) (D_\Y R_\X)^{- \theta/(1+\theta)}\Delta^{\theta/(1+\theta)}.
    \end{equation*}
    The proof is completed by applying Lemma \ref{lem: direct control on the integral fenchel deficit by coupling} above, and absorbing the $W_1$ term into the constant via
    \begin{equation*}
        D_\X^{q-1}W_1(\rho_0, \rho_1) \leq D_\X^{q-1 +1/(1+\theta)} R_\Y^{-\theta/(1+\theta)} \Big(R_\Y W_1(\rho_0, \rho_1) \Big)^{\theta/(1+\theta)}.
    \end{equation*}
\end{proof}

\begin{remark}[Recovering the version in the introduction]
    \label{rem: intro coercivity}
    Since we are comparing measures with different marginals, the max correlation suboptimality gap above and the quadratic cost optimality gap are not equal. We nonetheless have
    \begin{align*}
        \int_{\X \times \Y} 2\langle x,y\rangle \di(\gamma_0-\gamma_1)
        =& \int_{\X \times \Y}\|x-y\|^2\di(\gamma_1-\gamma_0)\\
        &+\int_\X\|x\|^2\di(\rho_0-\rho_1)+\int_\Y\|y\|^2\di(\mu_0-\mu_1)\\
        \leq& \int_{\X \times \Y}\|x-y\|^2\di(\gamma_1-\gamma_0)\\
        &+2R_\X W_1(\rho_0,\rho_1)+2R_\Y W_1(\mu_0,\mu_1).
    \end{align*}
    After doing the same bound for $\gamma_1 - \gamma_0$, we can control the absolute value, and thus recover the version presented in the introduction, up to a different constant.
\end{remark}

\subsection{Approximate coercivity with an almost regular source measure}

The second plan coercivity theorem concerns the transport problem where one marginal $\rho$ satisfies \ref{eq: assumption eqn} only approximately, similar to the quantitative uniqueness theorem. In this case, we prove that for any $\mu$, the $\rho \to \mu$ transport problem is coercive up to an error given in terms of $W_r(\rho, \tilde \rho)$. While for $r=1$ this theorem is implied by the above coercivity theorem combined with the stability theorem, for other $r$ the error exponent here is sharper. Since we only treat the fixed marginal case, we are once again in the setting where constants are translation invariant, so can be stated in terms of the diameters.

\begin{theorem}[Coercivity with almost regular source]
    \label{thrm: coercivity rough measures}
    Let $q \geq 1$ and $r \in [1, \infty]$, and let $\X$ and $\Y$ be compact and convex subsets of $\R^d$ for $d \geq 2$. Fix $\tilde \rho$ a probability measure on $\X$ satisfying \ref{eq: assumption eqn}. Then for any $\rho \in \PP(\X)$ and $\mu \in \PP(\Y)$ and any optimal plan $\gamma_{\rho \to \mu} \in \Gamma(\rho, \mu)$,
    \begin{equation*}
        \forall \gamma \in \Pi(\rho, \mu), \quad W_q(\gamma_{\rho \to \mu}, \gamma) \leq C_0\left( \int_{{\X \times \Y}} \|x - y\|^2 \di \gamma - W_2^2(\rho, \mu) \right)^{\frac{\theta}{q(1+\theta)}} + C_1 W_r(\rho, \tilde \rho)^\nu,
    \end{equation*}
    where $\nu \in (0, 1]$ is given by
    \begin{equation*}
    \nu = \begin{cases}
                \frac{ r\theta}{q(r+\theta)} & r<\infty\\
                \frac{\theta}{q}& r= \infty, 
            \end{cases}
    \end{equation*}
    where $C_0^q = D_\Y^q(1 + c_d M D_\X^{d-1+ \theta}) (D_\Y D_\X)^{- \theta/(1+\theta)}$ and $C_1>0$ is the constant from Theorem \ref{thrm: quant unique general}.
\end{theorem}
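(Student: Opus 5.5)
The plan is to combine the proof of Theorem~\ref{thrm: coercivity general marginals} with that of Theorem~\ref{thrm: quant unique general}. The regularity estimate of Theorem~\ref{thrm: quant reg} is applied with respect to $\tilde\rho$ rather than $\rho$. The price of transferring from $\rho$ to $\tilde\rho$ is paid through an $\alpha$-coupling, which produces the additive error $C_1 W_r(\rho,\tilde\rho)^\nu$.

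Fix a Brenier potential $\phi$ for $\rho\to\mu$ in the form \eqref{eq: double legendre trans eq}, so $\partial\phi(\R^d)\subset\Y$ and $\spt\gamma_{\rho\to\mu}\subset\graph(\partial\phi)$. Set $\gamma_0=\gamma_{\rho\to\mu}$ and $\gamma_1=\gamma$; both have marginals $(\rho,\mu)$. We take $\beta$ to be the identity coupling of $\rho$ with itself, so the $x$-part of the cost of $\pi_\beta$ vanishes.

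\textbf{Step 1 (split the $y$-cost).} Bound $\|y_0-y_1\|^q\le 2^{q-1}(\|y_0-g\|^q+\|g-y_1\|^q)$, where $g$ is some selection of $\partial\phi(x_0)$. Alternatively, and more cleanly, introduce an auxiliary optimal plan and use the triangle inequality in $W_q$.

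\begin{itemize}
\item The first piece compares two plans on $\graph(\partial\phi)$ with the same marginals. By Theorem~\ref{thrm: quant unique general} it is at most $C_1W_r(\rho,\tilde\rho)^\nu$.
\item The second piece is controlled by the second part of Lemma~\ref{lem: general coupling of plans} with the same $\tilde\rho,\alpha$, the identity $\beta$, and parameters $s,\eta$. Concretely, write $W_q(\gamma_0,\gamma)\le W_q(\gamma_0,\gamma_g)+W_q(\gamma_g,\gamma)$, where $\gamma_g$ is the plan obtained from $\gamma$ by replacing $y$ with a measurable selection $g(x)\in\partial\phi(x)$. Then $\gamma_g$ has first marginal $\rho$ and lies on $\graph(\partial\phi)$, but its second marginal may differ from $\mu$.
\end{itemize}

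This is the main obstacle. Theorem~\ref{thrm: quant unique general} needs both plans to be optimal with the same marginals, which $\gamma_g$ is not. So I would instead not split. I would apply the second bound of Lemma~\ref{lem: general coupling of plans} directly to $\pi_\beta$, using $G_\phi(x_0,y_0)=0$. This gives
\[
\int\|y_0-y_1\|^q\di\pi_\beta\le D_\Y^{q-1}\Big(\int D_{s+\eta}\phi\di\tilde\rho+\frac{\Delta}{s}\Big)+D_\Y^q\alpha(\|\tilde x-x_0\|>\eta),
\]
where $\Delta=\int G_\phi(x,y)\di\gamma=\tfrac12\big(\int\|x-y\|^2\di\gamma-W_2^2(\rho,\mu)\big)$ by optimality of $\phi$ and the shared marginals.

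\textbf{Step 2 (estimate and balance).} Theorem~\ref{thrm: quant reg} bounds the regularity term by $c_dMD_\X^{d-1}(s+\eta)^\theta$. Subadditivity of $t\mapsto t^\theta$ splits this into an $s^\theta$ part and an $\eta^\theta$ part. Markov's inequality bounds the $\alpha$ term by $W_r^r(\rho,\tilde\rho)/\eta^r$. The $\eta$-terms are balanced exactly as in the proof of Theorem~\ref{thrm: quant unique general}, giving $D_\Y^q(1+c_dMD_\X^{d-1+\theta})D_\X^{-q\nu}W_r^{q\nu}$. The $s$-terms are balanced as in Theorem~\ref{thrm: coercivity general marginals}, with $s^{1+\theta}=\Delta D_\X^\theta/D_\Y$, giving $C_0^q\Delta^{\theta/(1+\theta)}$. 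For $r=\infty$ we take $\eta=W_\infty$, and if $\Delta=0$ we let $s\downarrow0$.

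\textbf{Step 3 (conclude).} Take $q$-th roots and use $(a+b)^{1/q}\le a^{1/q}+b^{1/q}$ to separate the two contributions. The constants must be tracked so that the second one matches $C_1$; the factor $\tfrac12$ in $\Delta$ and factors $2^{\theta}$ can be absorbed or handled as in Remark~\ref{rmk: uniformity of constant in q r theta}.
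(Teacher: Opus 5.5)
Once you drop the split in Step 1, your argument is the paper's proof. It applies the second bound of Lemma~\ref{lem: general coupling of plans} with $\alpha$ optimal for $W_r(\rho,\tilde\rho)$ and $\beta$ the identity coupling, then uses Theorem~\ref{thrm: quant reg} with subadditivity of $t\mapsto t^\theta$ and Markov's inequality, balances $s$ and $\eta$ separately, and takes $q$-th roots. The constants come out as exactly $C_0$ and $C_1$ with nothing to absorb: $(s+\eta)^\theta\le s^\theta+\eta^\theta$ has constant one, no factor $2^\theta$ arises, and $\Delta$ is half the cost gap, so $\Delta^{\theta/(1+\theta)}$ is at most the cost gap raised to $\theta/(1+\theta)$.
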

\begin{proof}
       Let $\phi$ be a Brenier potential for the transport $\rho \to \mu$. We apply Lemma \ref{lem: general coupling of plans} with $\tilde \rho = \tilde \rho$, $\rho_0 = \rho_1 = \rho$, $\alpha$ optimal for $W_r(\rho, \tilde \rho)$, and $\beta$ the identity coupling. Then
    \begin{equation*}
        W_q^q(\gamma_0, \gamma_1) \leq \int_{(\X\times \Y)^2} \|y_0 - y_1\|^q \di \pi_\beta \leq D_\Y^{q-1}\int_{(\X \times \Y)^2} \|y_0 - y_1\| \di \pi_\beta.
    \end{equation*}
    For $r<\infty$, applying Lemma \ref{lem: Fenchel bound distance} followed by Theorem \ref{thrm: quant reg}, the subadditivity of $(\cdot)^\theta$, and a Markov bound,
    \begin{align*}
        \int_{(\X \times \Y)^2} \|y_0 - y_1\| \di \pi_\beta \leq& \int_\X D_{\eta+ s} \phi(\tilde x) \di \tilde \rho(\tilde x) + \frac{\Delta}{s} + D_\Y\alpha(\|\tilde x - x\| > \eta)\\
        \leq& D_\Y c_d M D_\X^{d-1} ( s^\theta + \eta^\theta)+ \frac{\Delta}{s} + D_\Y\frac{W_r^r(\rho, \tilde \rho)}{\eta^r}
    \end{align*}
    where as before
    \begin{equation*}
        \Delta = \int G_\phi(x_0, y_1) \di \pi_\beta.
    \end{equation*}
    When $\Delta = 0$ or $W_r(\tilde \rho, \rho)=0$ we collapse into cases covered by the previous theorems. Assuming both are positive, we choose $s$ as in the previous theorem, and $\eta$ as in the quantitative uniqueness theorem, which gives
    \begin{equation*}
        W_q^q(\gamma_0, \gamma_1) \leq C_0^q \Delta^{\theta/(1+\theta)} + C_1^q W_r(\rho, \tilde \rho)^{q\nu}
    \end{equation*}
    for the constants given in the statement of the theorem. We take the $q$th root then use subadditivity, before applying Lemma \ref{lem: direct control on the integral fenchel deficit by coupling} to complete the proof when $r<\infty$. Here we recover the quadratic cost gap directly since the plans have the same marginals, so that
    \begin{equation*}
        \int \langle x, y \rangle \di (\gamma_0 - \gamma_1) = \frac{1}{2}\int\|x-y\|^2 \di (\gamma_1 - \gamma_0).
    \end{equation*}
    When $r=\infty$, the same bounds hold directly, taking the same choice of $s$ and instead $\eta = W_\infty(\rho, \tilde \rho)$ without needing any Markov bound.
\end{proof}

\subsection{Coercivity of maps with respect to dual suboptimality}

For compact sets $\X, \Y$, we denote the maximum correlation semidual functional by $\K_{\rho, \mu} : \mathcal{C}^0(\Y) \to \R$,
\begin{equation}
    \label{eq: k rho mu definition}
    \K_{\rho, \mu}(\psi) = \int_\X \psi^* \di \rho + \int_\Y \psi \di \mu,
\end{equation} 
where for $\psi \in \mathcal{C}^0(\Y)$, $\psi^*(x) = \sup_{y \in \Y} \langle x, y \rangle - \psi(y)$ denotes the Legendre transform of $\psi$ extended to $+\infty$ outside of $\Y$. The following theorem provides a hybrid dual to primal coercivity theorem. It serves to directly extend \cite[Theorem 3.2]{merigot2026sharp}, which corresponds to the case $q=2$ and $\theta = 1$.
\begin{theorem}[Map coercivity with respect to dual suboptimality]
\label{thrm: map coercivity wrt dual suboptimality}
Let $q \in [1, +\infty)$, and let $\X$ and $\Y$ be compact and convex subsets of $\R^d$ for $d \geq2$. Let $\rho$ be a probability measure on $\X$ satisfying \ref{eq: assumption eqn}, and $\mu \in \PP(\Y)$. Then
\begin{equation*}
    \forall \psi \in \mathcal{C}^0(\Y),\quad\| \nabla \psi^* - T_{\rho \to \mu}\|_{L^q(\rho)}^q \leq C\left( \K_{\rho, \mu}(\psi) - \min_{\mathcal{C}^0(\Y)} \K_{\rho, \mu} \right)^{\frac{\theta}{1+\theta}},
\end{equation*}
where $C= D_\Y^{q}(1 + c_d M D_\X^{d-1+ \theta}) (D_\Y D_\X)^{- \theta/(1+\theta)}$.
\end{theorem}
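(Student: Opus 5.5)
Set $\phi := \psi^*$, which is convex and $R_\Y$-Lipschitz, with $\partial\phi(\R^d)\subset\Y$ since $\psi^*$ is a supremum of affine functions with slopes in $\Y$. Let $\phi_0$ be a Brenier potential for $\rho\to\mu$, with $\gamma_0=(\id,T_{\rho\to\mu})_\#\rho$ the unique optimal plan. The idea is to swap the roles in the proof of Theorem~\ref{thrm: coercivity general marginals}. There we measured the plan against the \emph{true} potential. Here we measure the true plan $\gamma_0$ against the \emph{approximate} potential $\phi$, so that the regularity estimate Theorem~\ref{thrm: quant reg} is applied to $\phi$ rather than $\phi_0$. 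This is allowed because Theorem~\ref{thrm: quant reg} holds for any convex $\phi$ with subgradients in $\Y$.

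First, I would identify the dual gap with an integrated Fenchel deficit. Since $\psi\in\mathcal{C}^0(\Y)$ and $\phi^*\le\psi$ on $\Y$ (indeed $\phi^*=\psi^{**}$ is the convex envelope of $\psi$),
\begin{equation*}
\int_{\X\times\Y} G_\phi(x,y)\di\gamma_0=\int\phi\di\rho+\int\phi^*\di\mu-\int\langle x,y\rangle\di\gamma_0\le \K_{\rho,\mu}(\psi)-\min_{\mathcal{C}^0(\Y)}\K_{\rho,\mu}=:\Delta .
\end{equation*}
This uses the fact that the minimum of $\K_{\rho,\mu}$ equals the maximal correlation $\int\langle x,y\rangle\di\gamma_0$, by strong duality \eqref{eq: Brenier potential definition}.

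Next, for $\rho$-a.e.\ $x$ the potential $\phi$ is differentiable, because $\rho$ satisfies \ref{eq: assumption eqn} and hence vanishes on sets of Hausdorff dimension $d-1$. At such $x$, apply Lemma~\ref{lem: Fenchel bound distance} with $y=T_{\rho\to\mu}(x)$ and $g=\nabla\phi(x)$: for every $s>0$,
\begin{equation*}
\|T_{\rho\to\mu}(x)-\nabla\psi^*(x)\|\le D_s\phi(x)+\frac{G_\phi(x,T_{\rho\to\mu}(x))}{s}.
\end{equation*}
Both vectors lie in $\Y$, so the $q$-th power is bounded by $D_\Y^{q-1}$ times the first power. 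Integrating against $\rho$ and applying Theorem~\ref{thrm: quant reg} to $\phi$, after translating so that $\X\subset\overline B_{D_\X}$, gives
\begin{equation*}
\|\nabla\psi^*-T_{\rho\to\mu}\|_{L^q(\rho)}^q\le D_\Y^{q-1}\Big(c_d M D_\X^{d-1}D_\Y s^\theta+\frac{\Delta}{s}\Big).
\end{equation*}

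Finally, optimise over $s$ exactly as in the proof of Theorem~\ref{thrm: coercivity general marginals}, with $D_\X$ in place of $R_\X$: take $s^{1+\theta}=\Delta D_\X^\theta/D_\Y$. This yields $C\Delta^{\theta/(1+\theta)}$ with the stated constant. If $\Delta=0$, let $s\downarrow0$ instead. The only delicate point, though routine, is the first step: checking that $\phi^*\le\psi$ on $\Y$, so that the Fenchel deficit is dominated by the dual gap even though $\psi$ need not be convex. The remaining steps are the established pattern.
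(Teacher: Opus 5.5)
Your proposal is correct and follows the paper's proof essentially line for line. The paper applies Lemma~\ref{lem: general coupling of plans} with $\phi=\psi^*$, identity couplings $\alpha,\beta$ and $\eta=0$, which reduces exactly to your pointwise use of Lemma~\ref{lem: Fenchel bound distance} with $y=T_{\rho\to\mu}(x)$ and $g=\nabla\psi^*(x)$. It then uses Theorem~\ref{thrm: quant reg}, the same choice $s^{1+\theta}=\Delta D_\X^\theta/D_\Y$, and bounds the integrated Fenchel deficit by the dual gap via $\psi^{**}\le\psi$ and strong duality.
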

\begin{proof}
    Denote $T = T_{\rho \to \mu}$. Since $\partial\psi^*(\X) \subset \Y$, we apply Lemma \ref{lem: general coupling of plans} with $\rho_0 = \rho_1 = \tilde \rho = \rho$, $\gamma_0 = (\id, \nabla\psi^*)_\#\rho$, $\gamma_1 = (\id, T)_\#\rho$, $\phi = \psi^*$, $\alpha, \beta$ the identity couplings, and $\eta= 0$. For this choice of coupling,
    \begin{equation*}
        \int_{(\X \times \Y)^2} \|y_0 - y_1\|^q \di \pi_\beta = \|\nabla\psi^* - T\|_{L^q(\rho)}^q.
    \end{equation*}
    The lemma implies that for all $s>0$
    \begin{equation*}
        \|\nabla\psi^* - T\|_{L^q(\rho)}^q \leq D_\Y^{q-1}\left(\int_\X D_s \psi^*(x) \di \rho(x) + \frac{\Delta}{s} \right),
    \end{equation*}
    where
    \begin{equation*}
        \Delta = \int_\X G_{\psi^*}(x, T(x)) \di \rho(x).
    \end{equation*}
    Applying Theorem~\ref{thrm: quant reg}, for all $s>0$,
 \begin{equation*}
        \|\nabla\psi^* - T\|_{L^q(\rho)}^q \leq D_\Y^{q-1}\left(D_\Y c_d M D_\X^{d-1} s^\theta + \frac{\Delta}{s} \right).
    \end{equation*}
    If $\Delta = 0$, letting $s\downarrow 0$ completes the proof. Otherwise, balancing the terms in $s$ as in previous theorems, we choose
    \begin{equation*}
        \left(\frac{s}{D_\X}\right)^\theta = \frac{\Delta}{D_\Y s} \implies s = \left(\frac{\Delta D_\X^\theta}{D_\Y}\right)^{1/(1+\theta)}.
    \end{equation*}
    Substituting this value gives
    \begin{equation*}
        \|\nabla\psi^* - T\|_{L^q(\rho)}^q \leq D_\Y^{q}(1 + c_d M D_\X^{d-1+ \theta}) (D_\Y D_\X)^{- \theta/(1+\theta)}\Delta^{\theta/(1+\theta)}.
    \end{equation*}
    The proof is completed by observing that $\Delta$ is a lower bound for the dual suboptimality gap. Since $\psi^{**} \leq \psi$,
    \begin{align*}
        \int_{\X} G_{\psi^*}(x, T(x)) \di \rho(x) \leq& \int_\X \psi^*(x) + \psi(T(x)) - \langle x, T(x)\rangle \di \rho(x)\\
        =& \K_{\rho, \mu}(\psi)  - \int_\X\langle x, T(x)\rangle \di \rho(x)\\
        =& \K_{\rho, \mu}(\psi) - \min_{\mathcal{C}^0(\Y)} \K_{\rho, \mu},
    \end{align*}
    by optimality of $T$ and strong duality.
\end{proof}

\subsection{Stability of optimal maps}

The following theorem provides a stability bound for optimal transport maps with respect to perturbation of both marginals, again extending a result \cite[Theorem 3.3]{merigot2026sharp} to general exponents $q$ and $\theta$, as well as improving the numerical constant.

A priori, there are some issues with regard to what one means by a bi-marginal stability result for optimal transport maps. In general, as soon as both marginals are perturbed, a Brenier map $T_{\rho_1 \to \mu_1}$ need not exist, since $\partial \phi_{\rho_1 \to \mu_1}$ may be multivalued on a set of positive $\rho_1$ measure. Here we compare the gradients of the Brenier potentials with respect to $L^q(\rho_0)$, using that the gradient of any convex function is well defined $\rho_0$ almost everywhere.

Nonetheless, given a Brenier potential for the transport from $\rho_1$ to $\mu_1$, there is no canonical choice for what values it should take outside the support of $\rho_1$, and so the gradients of two different Brenier potentials, although they are both well defined, might differ on a set of positive $\rho_0$ measure. The below theorem applies to \textit{any} Brenier potential which satisfies \eqref{eq: double legendre trans eq}.

\begin{theorem}[Bi-marginal Brenier gradient stability]
\label{thrm: bi marginal map stability}
Let $q \in [1, +\infty)$, let $\X$ and $\Y$ be compact subsets of $\R^d$, and let $\rho_0 \in \PP(\X)$ satisfy \ref{eq: assumption eqn}. Then for all $\rho_1 \in \PP(\X)$ and $\mu_0, \mu_1 \in \PP(\Y)$, and Brenier potentials $\phi_{\rho_i \to \mu_i}$
\begin{equation*}
    \|\nabla \phi_{\rho_0 \to \mu_0} - \nabla \phi_{\rho_1 \to \mu_1}\|_{L^q(\rho_0)}^q \leq C \Big(D_\Y W_1(\rho_0, \rho_1) +D_\X W_1(\mu_0, \mu_1)\Big)^{\frac{\theta}{1+\theta}}
\end{equation*}
where $C$ is the constant in Theorem~\ref{thrm: map coercivity wrt dual suboptimality}.
\end{theorem}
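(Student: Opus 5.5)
The plan is to reduce to Theorem~\ref{thrm: map coercivity wrt dual suboptimality}, applied to the fixed problem $\rho_0 \to \mu_0$ with the competitor $\psi = \psi_1 := \phi_{\rho_1\to\mu_1}^*$. Here $\psi_1$ is the dual potential of the perturbed problem, in the form \eqref{eq: double legendre trans eq}. With this form, $\psi_1 \in \mathcal{C}^0(\Y)$ and $\psi_1^* = \phi_{\rho_1 \to \mu_1}$ on $\X$, with Legendre transforms taken over $\X$ and $\Y$ respectively. Since $\rho_0$ satisfies \ref{eq: assumption eqn}, it vanishes on sets of dimension $d-1$, so $T_{\rho_0\to\mu_0} = \nabla\phi_{\rho_0\to\mu_0}$ holds $\rho_0$-a.e. and $\nabla\psi_1^*$ is defined $\rho_0$-a.e. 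Theorem~\ref{thrm: map coercivity wrt dual suboptimality} then gives
\begin{equation*}
\|\nabla\phi_{\rho_0\to\mu_0}-\nabla\phi_{\rho_1\to\mu_1}\|^q_{L^q(\rho_0)} \leq C\big(\K_{\rho_0,\mu_0}(\psi_1)-\min \K_{\rho_0,\mu_0}\big)^{\frac{\theta}{1+\theta}}.
\end{equation*}
The constant is exactly the one claimed. If needed, we first translate so that $0\in\X\cap\Y$ and replace $\X,\Y$ by their convex hulls; this does not change the diameters or the $W_1$ distances.

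It then remains to bound the dual suboptimality gap by $D_\Y W_1(\rho_0,\rho_1)+D_\X W_1(\mu_0,\mu_1)$. Since $\psi_1$ minimises $\K_{\rho_1,\mu_1}$, we have $\K_{\rho_1,\mu_1}(\psi_1)\le \K_{\rho_1,\mu_1}(\psi_0)$ with $\psi_0 = \phi_{\rho_0\to\mu_0}^*$. Hence
\begin{equation*}
\K_{\rho_0,\mu_0}(\psi_1)-\K_{\rho_0,\mu_0}(\psi_0) \le \int (\psi_1^*-\psi_0^*)\,\di(\rho_0-\rho_1) + \int(\psi_1-\psi_0)\,\di(\mu_0-\mu_1).
\end{equation*}
Each integrand is a difference of two Lipschitz functions, and these differences are invariant under adding constants to the integrating measure difference. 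On $\X$, both $\psi_i^*$ are $R_\Y$-Lipschitz, being suprema of $\langle\cdot,y\rangle$ over $y\in\Y$. A naive estimate would give the constant $2R_\Y$, which is not what is claimed.

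The main obstacle is getting the sharp constants $D_\Y$ and $D_\X$. For this, use translation invariance more cleverly. The difference $\psi_1^*-\psi_0^*$ has gradients of the form $\nabla\psi_1^*-\nabla\psi_0^*$, each lying in $\Y$, so its Lipschitz constant is at most $\diam\Y = D_\Y$. Likewise, $\psi_1-\psi_0$ on $\Y$ has a.e. gradients that are differences of points of $\X$, so it is $D_\X$-Lipschitz. This uses the fact that a difference of two $\Y$-valued-gradient convex functions is Lipschitz with constant at most $\sup\|y-y'\|$ along segments, which is justified a.e. on segments by convexity of the domains. Kantorovich--Rubinstein then yields the bound $D_\Y W_1(\rho_0,\rho_1)+D_\X W_1(\mu_0,\mu_1)$.

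Finally, substitute this into the coercivity estimate. Monotonicity of $t\mapsto t^{\theta/(1+\theta)}$ gives the result.
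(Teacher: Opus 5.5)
Your proposal is correct and is essentially the paper's own proof. You apply Theorem~\ref{thrm: map coercivity wrt dual suboptimality} to the $\rho_0\to\mu_0$ problem with competitor $\psi_1$, add the nonnegative dual gap of $\psi_0$ for the $\rho_1\to\mu_1$ problem, and conclude by Kantorovich--Rubinstein, using that $\phi_1-\phi_0$ and $\psi_1-\psi_0$ are $D_\Y$- and $D_\X$-Lipschitz (the preliminary translation to $0\in\X\cap\Y$ is harmless but unnecessary).
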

\begin{proof}
Let $(\phi_0, \psi_0)$ and $(\phi_1, \psi_1)$ be pairs of optimal potentials for the transport between $\rho_i$ and $\mu_i$ respectively. We apply the dual coercivity Theorem~\ref{thrm: map coercivity wrt dual suboptimality} to the $\rho_0, \mu_0$ problem, using that $(\phi_1, \psi_1)$ is a competitor to the dual $\K_{\rho_0, \mu_0}$. Thus
\begin{equation}
    \label{eq: bi marignal map subopt gap}
    \| \nabla \phi_{\rho_0 \to \mu_0} - \nabla \phi_{\rho_1 \to \mu_1}\|_{L^q(\rho_0)}^q \leq C \left( \int \phi_1 \di \rho_0 + \int \psi_1 \di \mu_0 - \int \phi_0 \di \rho_0  -\int \psi_0 \di \mu_0 \right)^{\frac{\theta}{1+\theta}}.
\end{equation}
Adding the second dual suboptimality gap
\begin{equation*}
    \int \phi_0 \di \rho_1  + \int \psi_0 \di \mu_1 - \int \phi_1 \di \rho_1  - \int \psi_1 \di \mu_1  \geq 0
\end{equation*}
to the right hand side of \eqref{eq: bi marignal map subopt gap} inside the bracket,
\begin{equation*}
    \| \nabla \phi_{\rho_0 \to \mu_0} - \nabla \phi_{\rho_1 \to \mu_1}\|_{L^q(\rho_0)}^q \leq C \left( \int (\phi_1 - \phi_0) \di(\rho_0-\rho_1) + \int (\psi_1 - \psi_0) \di(\mu_0 - \mu_1) \right)^{\frac{\theta}{1+\theta}}.
\end{equation*}
Since $\nabla \phi_i \subset \Y$ and $\nabla \psi_i \subset \X$ for $i=0, 1$, then the functions $\phi_1 - \phi_0$ and $\psi_1 - \psi_0$ are $D_\Y$ and $D_\X$ Lipschitz respectively, so the result follows by Kantorovich-Rubinstein duality.
\end{proof}

\begin{remark}[Interpretation for statistical estimation of transport maps]
    \label{rmk: statistical estimation of transport maps}
The theorem has the following implications for approximation of transport maps. It says that if one can solve the dual problem for $\rho_1, \mu_1$ to get some Brenier potential $\phi_{\rho_1 \to \mu_1}$ defined on all $\X$, then $\nabla \phi_{\rho_1 \to \mu_1}$ is a good approximator in $L^q(\rho_0)$ for the transport map $T_{\rho_0 \to \mu_0} = \nabla \phi_{\rho_0 \to \mu_0}$. This means that a map constructed out of the gradient of the Brenier potential $\phi_{\rho_1 \to \mu_1}$ will, on average, send samples from $\rho_0$ to a point near the image of the true optimal map $T_{\rho_0 \to \mu_0}$.
\end{remark}

The fixed source stability of optimal transport maps presented in the introduction is a direct consequence of the above version by taking $\rho_0 = \rho_1$, since $T_{\rho \to \mu_i} = \nabla \phi_{\rho \to \mu_i}$.

\subsection{Stability of optimal plans}

We now prove the bi-marginal stability of optimal transport plans with respect to perturbation of both marginals. This is a relatively direct consequence of Theorem \ref{thrm: coercivity general marginals}. In the appendix, we present a second proof of this theorem using Theorem \ref{thrm: bi marginal map stability} and Theorem \ref{thrm: same subgrad plan control}.

\begin{theorem}[Bi-marginal stability of optimal plans]
    \label{thrm: bi marginal plan stability}
    Let $q \in [1, +\infty)$, and let $\X$ and $\Y$ be compact convex subsets of $\R^d$ for $d \geq 2$. Let $\rho_0$ be a probability measure on $\X$ satisfying \ref{eq: assumption eqn}, let $\mu_0 \in \PP(\Y)$, and let $\gamma_{\rho_0 \to \mu_0} \in \Gamma(\rho_0, \mu_0)$ be the unique optimal plan between them. Then for all $\rho_1 \in \PP(\X)$, all $\mu_1 \in \PP(\Y)$ and any optimal plan $\gamma_{\rho_1 \to \mu_1} \in \Gamma(\rho_1, \mu_1)$,
    \begin{equation*}
        W_q(\gamma_{\rho_0 \to \mu_0}, \gamma_{\rho_1 \to \mu_1}) \leq C\Big( W_1(\rho_0, \rho_1)+  W_1(\mu_0, \mu_1)\Big)^{\frac{\theta}{q(1+\theta)}},
    \end{equation*}
    where $C = 2\max(D_\X, D_\Y)^{\theta/(q(1+\theta))}C'$ for $C'$ the constant in Theorem \ref{thrm: coercivity general marginals}, with $R_\X$ and $R_\Y$ replaced by $D_\X$ and $D_\Y$.
\end{theorem}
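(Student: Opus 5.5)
The plan is to deduce the result from the arbitrary marginal coercivity Theorem~\ref{thrm: coercivity general marginals}, applied with $\gamma_0=\gamma_{\rho_0\to\mu_0}$ and $\gamma_1=\gamma_{\rho_1\to\mu_1}\in\Pi(\rho_1,\mu_1)$. The translation-dependent constants $R_\X,R_\Y$ are handled first. The quantity $W_q(\gamma_0,\gamma_1)$ and the $W_1$ distances between marginals are invariant under translating $\X$ by some $a$ and $\Y$ by some $b$. Optimality is also preserved: translating the source changes $\int\langle x,y\rangle\di\gamma$ only by $\langle a,\int y\di\mu\rangle$, which is constant on $\Pi(\rho,\mu)$, and similarly for the target. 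Assumption \ref{eq: assumption eqn} is translation invariant as well. So we may assume $0\in\X\cap\Y$, which gives $R_\X\le D_\X$ and $R_\Y\le D_\Y$. This explains why $C'$ may be taken with $R$'s replaced by $D$'s; note the constant in that theorem is monotone increasing in $R_\X$, while $R_\Y$ enters it only through a factor that is harmless after the replacement.

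The only genuine step is to bound the correlation gap $\left|\int\langle x,y\rangle\di(\gamma_0-\gamma_1)\right|$ by the marginal perturbations, using optimality of both plans and duality. Let $(\phi_i,\psi_i)$ be optimal potentials in the form \eqref{eq: double legendre trans eq}, so $\int\langle x,y\rangle\di\gamma_i=\int\phi_i\di\rho_i+\int\psi_i\di\mu_i$. For the upper bound, $(\phi_1,\psi_1)$ is admissible for the $(\rho_0,\mu_0)$ dual problem (it satisfies $\phi_1(x)+\psi_1(y)\ge\langle x,y\rangle$). Hence
\begin{equation*}
\int\langle x,y\rangle\di\gamma_0-\int\langle x,y\rangle\di\gamma_1\le\int\phi_1\di(\rho_0-\rho_1)+\int\psi_1\di(\mu_0-\mu_1)\le R_\Y W_1(\rho_0,\rho_1)+R_\X W_1(\mu_0,\mu_1),
\end{equation*}
by Kantorovich--Rubinstein, since $\phi_1,\psi_1$ are $R_\Y$- and $R_\X$-Lipschitz. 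The symmetric argument with $(\phi_0,\psi_0)$ admissible for the $(\rho_1,\mu_1)$ problem bounds the reverse difference. Together these control the absolute value by $R_\Y W_1(\rho_0,\rho_1)+R_\X W_1(\mu_0,\mu_1)$.

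Substituting into Theorem~\ref{thrm: coercivity general marginals}, the bracket there is at most $2\big(R_\Y W_1(\rho_0,\rho_1)+R_\X W_1(\mu_0,\mu_1)\big)\le 2\max(D_\X,D_\Y)\big(W_1(\rho_0,\rho_1)+W_1(\mu_0,\mu_1)\big)$. Raising to the power $\theta/(q(1+\theta))\le1$ and using $2^{\theta/(q(1+\theta))}\le 2$ gives the stated constant $C=2\max(D_\X,D_\Y)^{\theta/(q(1+\theta))}C'$.

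The main obstacle is the bookkeeping in the translation reduction: checking that the coercivity constant in Theorem~\ref{thrm: coercivity general marginals} remains valid after replacing $R$'s by $D$'s, given that negative powers of $R_\Y$ appear in it. If this fails, the fallback is to redo the final absorption step of that proof directly with $D_\X,D_\Y$, using the bounds $R_\X\le D_\X$ and $R_\Y\le D_\Y$ only where they appear with positive powers.
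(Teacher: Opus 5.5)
Your proposal is correct and is essentially the paper's proof: translate so that $0\in\X\cap\Y$, apply Theorem~\ref{thrm: coercivity general marginals} to the two optimal plans, and bound the correlation gap in each direction by using the other problem's optimal potentials as a dual competitor together with Kantorovich--Rubinstein, so that the whole bracket is at most $2\max(D_\X,D_\Y)\big(W_1(\rho_0,\rho_1)+W_1(\mu_0,\mu_1)\big)$. On the constant bookkeeping (which the paper also handles in one line), your claim that the constant is monotone in $R_\X$ is not visible from the formula, because of the factor $R_\X^{-\theta/(1+\theta)}$; it does hold on $[R_\X,D_\X]$, since $A(M,\theta)$ forces $MR_\X^{d-1+\theta}\geq 1$, but the cleaner route is to translate $\X$ and $\Y$ independently so that the origin is an endpoint of a diameter of each, which gives $R_\X=D_\X$ and $R_\Y=D_\Y$ exactly and lets Theorem~\ref{thrm: coercivity general marginals} apply verbatim with the stated constant.
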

\begin{proof}
    Since all the quantities in the theorem are invariant under translation of source and target domains, we can assume without loss of generality that $0 \in \X \cap \Y$. We apply the arbitrary marginal coercivity theorem, Theorem \ref{thrm: coercivity general marginals}, with $\gamma_i = \gamma_{\rho_i \to \mu_i}$, so that
    \begin{equation}
        \label{eq: stability bound pre coercivity}
        W_q(\gamma_0, \gamma_1)\leq C' \left( \left|\int_{\X \times \Y} \langle x, y \rangle \di(\gamma_0 - \gamma_1)\right| + D_\Y W_1(\rho_0, \rho_1) + D_\X W_1(\mu_0, \mu_1) \right)^{\frac{\theta}{q(1+\theta)}},
    \end{equation}
    and we replaced the instances of $R_\X$ and  $R_\Y$ by $D_\X$ and  $D_\Y$, since $0 \in \X \cap \Y$. Let $(\phi_0, \psi_0)$ and $(\phi_1, \psi_1)$ be pairs of optimal maximum correlation potentials for the respective dual problems from $\rho_i$ to $\mu_i$. Write $\gamma_i=\gamma_{\rho_i\to\mu_i}$. The compatibility condition for each of the two transport problems implies
    \begin{align*}
        \int \langle x,y\rangle\di(\gamma_0-\gamma_1)
        =& \int\phi_0\di\rho_0+\int\psi_0\di\mu_0-\int\phi_1\di\rho_1-\int\psi_1\di\mu_1\\
        \leq& \int\phi_1\di(\rho_0-\rho_1)+\int\psi_1\di(\mu_0-\mu_1)\\
        \leq& D_\Y W_1(\rho_0,\rho_1)+D_\X W_1(\mu_0,\mu_1),
    \end{align*}
    where we used the suboptimality of $(\phi_1,\psi_1)$ in the $\rho_0$ to $\mu_0$ dual problem, followed by Kantorovich-Rubinstein duality. Interchanging $\gamma_0$ and $\gamma_1$ and applying a symmetric bound controls the absolute value $\left|\int \langle x, y \rangle \di (\gamma_0 - \gamma_1)\right|$. Thus the gap
    \begin{equation*}
        \Delta = \left|\int_{\X \times \Y} \langle x, y \rangle \di (\gamma_0 - \gamma_1) \right| + D_\Y W_1(\rho_0, \rho_1) + D_\X W_1(\mu_0, \mu_1)
    \end{equation*}
    in Theorem~\ref{thrm: coercivity general marginals} satisfies
    \begin{equation*}
        \Delta\leq 2D_\Y W_1(\rho_0,\rho_1)+2D_\X W_1(\mu_0,\mu_1) \leq 2\max(D_\X, D_\Y)(W_1(\rho_0, \rho_1) + W_1(\mu_0, \mu_1)).
    \end{equation*}
    Substituting this bound into \eqref{eq: stability bound pre coercivity} completes the proof.
\end{proof}

\section{Sharpness}

In this section we provide two examples which prove that the four main theorems are sharp in exponent, in the sense that the same bound cannot hold with a better exponent. The first example provides sharpness for quantitative uniqueness as well as the same subgradient plan control, while the second provides sharpness for the stability and coercivity of plans, as well as stability of maps. The second example is heavily based on the example given in \cite{letrouit2026unstable}, which corresponds to the case $\theta = 1$.

\subsection{Sharpness for uniqueness}

In general, it is challenging to ascertain a lower bound on $W_q(\gamma_0, \gamma_1)$, since often the optimal coupling between the plans is not clear. This is in contrast to lower bounding a distance between maps $\|T_0 - T_1\|_{L^q(\rho)}$, where as soon as the optimal map is known explicitly, the distance can be calculated. For this first example, it is sufficient for us to use the crude bound
\begin{equation*}
        W_q^q(\gamma_0, \gamma_1)  \geq \int \dist( (x, y), \spt\gamma_0)^q \di \gamma_1(x, y),
\end{equation*}
as the expression on the right is more easily calculable. This bound holds since, at best, every point in $\spt\gamma_1$ is coupled with its closest point to $\spt \gamma_0$.
\begin{proposition}
\label{prop: sharpness quant unique}
    Let $d \geq 2$ and $\theta \in (0, 1]$. There exists a probability measure $\tilde  \rho$ on $[-1, 1]^d$ satisfying \ref{eq: assumption eqn} for some $M>0$, and families of probability measures 
    \begin{equation*}
        (\rho_\varepsilon)_{\varepsilon \in [0, 1]}, (\mu_\varepsilon)_{\varepsilon \in [0, 1]} \in \PP([-1, 1]^d),
    \end{equation*}
    such that for all $q\geq1$ and $r \in [1, \infty]$,
    \begin{equation*}
        \diam_{W_q} \Gamma(\rho_\varepsilon, \mu_\varepsilon) \geq 8^{-1} \varepsilon^{\frac{\theta}{q}}, \quad \text{ but } \quad W_r(\rho_\varepsilon, \tilde \rho) \leq \begin{cases}
        \varepsilon^{\frac{r+\theta}{r}} & r<\infty, \\
        \varepsilon & r = \infty.
        \end{cases}
    \end{equation*}
    Consequently, Theorems~\ref{thrm: quant unique general} and \ref{thrm: same subgrad plan control} are optimal in exponent for all $q, r, \theta$.
\end{proposition}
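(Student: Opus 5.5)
We would build the example from the density in the remark on sharpness of the exponent in Theorem~\ref{thrm: quant reg}. Let $\tilde\rho$ have density $c_\theta|x_1|^{\theta-1}$ on $[-1,1]^d$. Then $\tilde\rho(B_\eta(x))\lesssim \eta^{d-1}\cdot\eta^{\theta}$, since the slab $\{|x_1-a|\le\eta\}$ has $\tilde\rho$-mass at most $\lesssim\eta^\theta$ per unit cross-section, so $\tilde\rho$ satisfies \ref{eq: assumption eqn} for some $M$.

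Define $\rho_\varepsilon := P_\#\tilde\rho$, where $P$ collapses the slab $S_\varepsilon=\{|x_1|\le\varepsilon\}$ onto the hyperplane $H=\{x_1=0\}$ via $x\mapsto(0,x_2,\dots,x_d)$ and is the identity elsewhere. Then $m_\varepsilon:=\rho_\varepsilon(H)=\tilde\rho(S_\varepsilon)=2^{1-d}\varepsilon^\theta\cdot 2^{d-1}=\varepsilon^\theta$ (up to the normalisation). The coupling $(\id,P)_\#\tilde\rho$ moves mass $\varepsilon^\theta$ by at most $\varepsilon$, so
\begin{equation*}
W_r^r(\rho_\varepsilon,\tilde\rho)\le\varepsilon^{r+\theta}\quad (r<\infty), \qquad W_\infty(\rho_\varepsilon,\tilde\rho)\le\varepsilon,
\end{equation*}
which are exactly the claimed upper bounds. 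For the target take $\mu_\varepsilon=\frac12(\delta_{e_1}+\delta_{-e_1})$ (scaled into $[-1,1]^d$ if needed), and take $\phi(x)=|x_1|$. Then $\partial\phi=\{e_1\}$ on $\{x_1>0\}$, $\{-e_1\}$ on $\{x_1<0\}$, and $[-e_1,e_1]$ on $H$. Any plan in $\Pi(\rho_\varepsilon,\mu_\varepsilon)$ supported on $\graph(\partial\phi)$ is optimal, since $\phi$ is a convex potential and $\phi+\phi^*$ is integrated exactly by such a plan. By the symmetry $x_1\mapsto-x_1$, the off-hyperplane mass already sends $(1-m_\varepsilon)/2$ to each point, so the mass on $H$ must be split half–half. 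We define $\gamma_0$ to send $H\cap\{x_2>0\}$ to $e_1$ and $H\cap\{x_2<0\}$ to $-e_1$, and $\gamma_1$ to do the reverse. Both lie in $\Gamma(\rho_\varepsilon,\mu_\varepsilon)$.

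The main obstacle is the lower bound $W_q(\gamma_0,\gamma_1)\ge 8^{-1}\varepsilon^{\theta/q}$. The crude bound $\int\dist(\cdot,\spt\gamma_0)^q\di\gamma_1$ stated before the proposition is useless here: a point $(x,-e_1)$ with $x\in H$ is within distance $\varepsilon$ of $\spt\gamma_0$, through points with $x_1\approx-\varepsilon$, and this only yields $\varepsilon^{q+\theta}$. Instead we would use a mass-counting argument. Fix any $\pi\in\Pi(\gamma_0,\gamma_1)$ and let $\kappa$ be the mass of $\pi$ on which $y_0\ne y_1$; since the $y$-values differ by $2$ there, this mass contributes at least $2^q\kappa$ to the cost. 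On the remaining mass, $\pi$ couples, up to total mass $\kappa$ in each class, the $x$-marginal of $\gamma_0$ restricted to $\{y=-e_1\}$ with the corresponding marginal of $\gamma_1$. Off $H$ these two $x$-marginals coincide. On $H$, however, one carries $m_\varepsilon/2$ on $\{x_2<0\}$ and the other carries it on $\{x_2>0\}$.

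To convert this into a lower bound we project onto the coordinate $x_2$, which is $1$-Lipschitz, so the $x$-part of the cost dominates a $W_q^q$, hence $W_1^q$, cost between the $x_2$-marginals. These $x_2$-marginals differ by moving mass $m_\varepsilon/2$ from the distribution of $x_2$ on $(-1,0)$ to that on $(0,1)$. Since the $x_2$-law of the common part is independent of the sign structure, a CDF computation gives a $W_1$ difference $\gtrsim m_\varepsilon$, even after discarding mass $\kappa$. Combining the two contributions with Jensen's inequality gives
\begin{equation*}
\int\|x_0-x_1\|^q+\|y_0-y_1\|^q\di\pi\ \gtrsim\ \min\big(2^q\kappa,\ (m_\varepsilon-c\kappa)_+\big)\ \gtrsim\ \varepsilon^\theta,
\end{equation*}
where we use that $W_1\le$ (total displaced mass)$\times$(distance), so a $W_1$ gap of order $m_\varepsilon$ with distances at most $\sqrt d$ forces cost $\gtrsim m_\varepsilon$. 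This gives $W_q\gtrsim\varepsilon^{\theta/q}$. We would then track the constants in the CDF estimate to reach the factor $8^{-1}$.

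Finally, comparing with Theorem~\ref{thrm: quant unique general}: $W_r(\rho_\varepsilon,\tilde\rho)^{\nu'}$ with any exponent $\nu'>\nu$ would give $\varepsilon^{\nu'(r+\theta)/r}=o(\varepsilon^{\theta/q})$, contradicting the lower bound. For Theorem~\ref{thrm: same subgrad plan control}, note that $\gamma_0$ and $\gamma_1$ both lie on $\graph(\partial\phi)$, as does $(\id,\nabla\phi)_\#\tilde\rho$. By the triangle inequality, one of $\gamma_0,\gamma_1$ is at $W_q$-distance at least $16^{-1}\varepsilon^{\theta/q}$ from it, while its first marginal $\rho_\varepsilon$ is at the stated $W_r$-distance from $\tilde\rho$. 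This gives optimality of that exponent as well.
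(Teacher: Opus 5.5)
There is a genuine gap, in the step you flagged as the main obstacle. With your target $\mu_\varepsilon=\frac12(\delta_{e_1}+\delta_{-e_1})$, the bound $W_q(\gamma_0,\gamma_1)\geq 8^{-1}\varepsilon^{\theta/q}$ is false for every $q>1$.

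The faulty inference is your last one, that a $W_1$-gap of order $m_\varepsilon$ ``forces cost $\gtrsim m_\varepsilon$''. A $W_1$-gap of order $m_\varepsilon$ between the $x_2$-marginals only forces $\int\|x_0-x_1\|^q\,\mathrm{d}\pi\gtrsim m_\varepsilon^q$, by Jensen, because the coupled mass is of order one, not $m_\varepsilon$. Bounded distances give $t^q\leq D^{q-1}t$, which points the wrong way. So your estimate should read $\min(2^q\kappa,(m_\varepsilon-c\kappa)_+^q)$, which only gives $W_q\gtrsim\varepsilon^\theta$.

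This weaker bound is the truth for your example, because the bulk mass sharing the atom $e_1$ acts as a conveyor belt. The $x$-marginals of $\gamma_0$ and $\gamma_1$ restricted to $\{y=e_1\}$ are $\rho_\varepsilon|_{\{x_1\geq\varepsilon\}}+\rho_\varepsilon|_{H\cap\{x_2>0\}}$ and $\rho_\varepsilon|_{\{x_1\geq\varepsilon\}}+\rho_\varepsilon|_{H\cap\{x_2<0\}}$. Couple them Knothe-style, keeping $y_0=y_1$:
\begin{itemize}
\item First couple monotonically in $x_2$. Both $x_2$-densities are at least $\frac{1-m_\varepsilon}{4}$ and their distribution functions differ by at most $\frac{m_\varepsilon}{2}$, so points move by $O(m_\varepsilon)$.
\item Then couple monotonically in $x_1$ given $x_2$, with the identity on $x_3,\dots,x_d$. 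The conditional laws are the normalised law $\propto x_1^{\theta-1}$ on $[\varepsilon,1]$, possibly mixed with an atom at $0$ of weight $\frac{2m_\varepsilon}{1+m_\varepsilon}$. The monotone map between such laws sends the atom into $[\varepsilon,(3m_\varepsilon)^{1/\theta}]=[\varepsilon,3^{1/\theta}\varepsilon]$ and moves bulk points by $O(m_\varepsilon/\theta)$.
\end{itemize}
Doing the same on $\{y=-e_1\}$ gives $W_q(\gamma_0,\gamma_1)\leq W_\infty(\gamma_0,\gamma_1)\leq C_\theta\varepsilon^\theta=o(\varepsilon^{\theta/q})$ for $q>1$. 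So your example only establishes sharpness for $q=1$.

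The missing idea is to give the hyperplane mass its own target atoms, so it cannot be traded against the bulk. The paper takes $\mu_\varepsilon=\frac{1-\varepsilon^\theta}{2}(\delta_{e_1}+\delta_{-e_1})+\frac{\varepsilon^\theta}{2}(\delta_{e_1/2}+\delta_{-e_1/2})$. It sends $x\in H$ to $\frac{(-1)^i}{2}\operatorname{sign}(x_2)e_1\in[-e_1,e_1]=\partial\phi(x)$, so both plans remain optimal for $\phi=|x_1|$.

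Now the crude support bound you discarded suffices. The set $(H\cap\{x_2\geq\frac12\})\times\{-\frac12e_1\}$ has $\gamma_1$-mass $\varepsilon^\theta/4$. It lies at distance at least $\frac12$ from the support of $\gamma_0$: there, $y=-\frac12e_1$ forces $x_2\leq0$, and every other $y$-value is at distance at least $\frac12$. Hence $W_q^q(\gamma_0,\gamma_1)\geq2^{-2-q}\varepsilon^\theta$ and $W_q\geq8^{-1}\varepsilon^{\theta/q}$.

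The rest of your proposal coincides with the paper's and goes through unchanged once the target is modified: the choice of $\tilde\rho$, the collapse map, the $W_r$ bounds, and the triangle-inequality step for Theorem~\ref{thrm: same subgrad plan control}.
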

\begin{proof}
    We choose
    \begin{equation*}
        \di \tilde \rho(x) = \frac{\theta}{2^d} |x_1|^{\theta-1} \chi_{[-1,1]^d}(x)\di x
    \end{equation*}
    as the uniform measure weighted by an integrable singularity $|x_1|^{\theta - 1}$. Then for any $B_\eta(z)$, denoting by $B_\eta(z_{-1})$ the ball in $\R^{d-1}$ centred at the point $(z_2,...,z_d)$,
    \begin{align*}
        \tilde \rho(B_\eta(z)) \leq& \tilde \rho( [z_1-\eta, z_1+\eta] \times B_\eta(z_{-1})) \\\leq& \eta^{d-1} \beta_{d-1} \frac{\theta}{2^d}\int_{z_1-\eta}^{z_1+\eta} |x_1|^{\theta - 1} \di x_1  \leq \frac{\beta_{d-1}}{2^{d-1}}\eta^{d-1+ \theta}  
    \end{align*}
    since the integral is maximised when $z_1 = 0$. Consequently $\tilde \rho$ satisfies \ref{eq: assumption eqn} with $M = \beta_{d-1}/2^{d-1}$. We define $\rho_\varepsilon$ by collapsing the $\varepsilon$ neighbourhood around the hyperplane $x_1 = 0$, so that $\rho_\varepsilon = P_{\varepsilon\#}\tilde \rho$ where
    \begin{equation*}
        P_\varepsilon(x) = \begin{cases}
            x - x_1 e_1 & \text{ if } |x_1| \leq \varepsilon \\
            x & \text{ otherwise.}
        \end{cases}
    \end{equation*}
    Take target measures
    \begin{equation*}
        \mu_\varepsilon = \frac{1- \varepsilon^\theta}{2} (\delta_{(1, 0,...)} + \delta_{(-1, 0,...)}) + \frac{\varepsilon^\theta}{2}(\delta_{(1/2, 0,...)} + \delta_{(-1/2, 0,...)}).
    \end{equation*}
    For any $\varepsilon>0$, consider for $i=0, 1$ the maps
    \begin{equation*}
        T_i^\varepsilon(x) = \begin{cases}
            \sign(x_1)e_1 &\text{ if } x_1 \neq 0 \\
            \frac{(-1)^i}{2} \sign(x_2)e_1 & \text{ if } x_1 =0,
        \end{cases}
    \end{equation*}
    and induced plans $\gamma_i^\varepsilon = (\id, T_i^\varepsilon)_{\#}\rho_\varepsilon \in \Pi(\rho_\varepsilon, \mu_\varepsilon)$; see Figure~\ref{fig: quant unique example}.
    \begin{figure}[ht]
        \centering
            \begin{tikzpicture}[
    x=2.18cm,
    y=1.72cm,
    >=Latex,
    line cap=round,
    line join=round,
    every node/.style={font=\small},
    flow/.style={-{Latex[length=2.4mm,width=1.65mm]},line width=.72pt,targetblue},
    massbrace/.style={decorate,decoration={brace,amplitude=3.2pt},line width=.66pt},
    target/.style={circle,fill=targetblue,draw=white,line width=.45pt,minimum size=4.9pt,inner sep=0pt},
    outertarget/.style={circle,fill=black!72,draw=white,line width=.4pt,minimum size=4.4pt,inner sep=0pt}
    ]

    \def\epsstrip{0.17}

    \newcommand{\sourcepanel}[1]{%
    \shade[left color=sourcegray!6,right color=sourcegray!48]
        (-1,-1) rectangle (-\epsstrip,1);
    \shade[left color=sourcegray!48,right color=sourcegray!6]
        (\epsstrip,-1) rectangle (1,1);
    \fill[white] (-\epsstrip,-1) rectangle (\epsstrip,1);
    \draw[black!68,line width=.55pt] (-1,-1) rectangle (1,1);

    \draw[black!32,densely dotted,line width=.42pt] (-1.06,0)--(1.06,0);
    \draw[black,line width=2.15pt] (0,-1)--(0,1);

    \node[outertarget] at (-1,0) {};
    \node[outertarget] at ( 1,0) {};
    \node[target] at (-.5,0) {};
    \node[target] at ( .5,0) {};

    \node[font=\scriptsize,black!62,above right=2.1pt and 2pt] at (-1,0) {$-e_1$};
    \node[font=\scriptsize,black!62,above left=2.1pt and 2pt] at ( 1,0) {$e_1$};
    \node[font=\scriptsize,below=3.5pt] at (-.5,0) {$-\tfrac12 e_1$};
    \node[font=\scriptsize,below=3.5pt] at ( .5,0) {$\tfrac12 e_1$};

    \node[font=\scriptsize,black!68,fill=white,inner sep=1.2pt]
        at (0,1.10) {$x_1=0$};
    \draw[<->,black!62,line width=.48pt,{Latex[length=1.45mm]}-{Latex[length=1.45mm]}]
        (-\epsstrip,-1.105)--(\epsstrip,-1.105)
        node[midway,below=1.5pt,font=\scriptsize] {$2\varepsilon$};
    \node[font=\normalsize] at (0,1.43) {#1};
    }

    \begin{scope}[xshift=-2.72cm]
    \sourcepanel{$\gamma_0^\varepsilon$}
    \draw[massbrace, decoration={brace,mirror,amplitude=3.2pt}] (.045,.10)--(.045,.90);
    \draw[massbrace,decoration={brace,amplitude=3.2pt}] (-.045,-.90)--(-.045,-.10);
    \draw[flow,shorten >=3pt] (.09,.53)--(.5,0);
    \draw[flow,shorten >=3pt] (-.09,-.53)--(-.5,0);
    \end{scope}

    \begin{scope}[xshift=2.72cm]
    \sourcepanel{$\gamma_1^\varepsilon$}
    \draw[massbrace,decoration={brace,mirror,amplitude=3.2pt}] (-.045,.90)--(-.045,.10);
    \draw[massbrace] (.045,-.10)--(.045,-.90);
    \draw[flow,shorten >=3pt] (-.09,.53)--(-.5,0);
    \draw[flow,shorten >=3pt] (.09,-.53)--(.5,0);
    \end{scope}

            \end{tikzpicture}
        \caption{The optimal transport plans $\gamma_i^\varepsilon$ between $\rho_\varepsilon$ and $\mu_\varepsilon$.}
        \label{fig: quant unique example}
    \end{figure}
    
    Both plans are concentrated on the subgradient of the convex function $\phi(x) = |x_1|$, so that $\gamma_i^\varepsilon \in \Gamma(\rho_\varepsilon, \mu_\varepsilon)$ are optimal. Comparing the marginal measures, for finite $r$, by monotonicity
    \begin{equation*}
        W_r^r(\tilde \rho, \rho_\varepsilon) = \frac{\theta}{2}\int_{-\varepsilon}^\varepsilon |x_1|^{r+ \theta-1} \di x_1 = \frac{\theta}{r+\theta} \varepsilon^{r+\theta}.
    \end{equation*}
    The coupling induced by $P_\varepsilon$ gives $W_\infty(\tilde \rho, \rho_\varepsilon) \leq \varepsilon$. To compare the plans, the distance can be lower bounded by
    \begin{equation*}
        W_q^q(\gamma_0^\varepsilon, \gamma_1^\varepsilon)  \geq \int_{[-1, 1]^{2d}} \dist( (x, y), \spt\gamma_0^\varepsilon)^q \di \gamma_1^\varepsilon(x, y).
    \end{equation*}
    The set $(\{0\} \times [1/2, 1] \times [-1, 1]^{d-2}) \times (\{(-1/2, 0)\} \times [-1, 1]^{d-2})$ has $\gamma_1^\varepsilon$ mass $\varepsilon^\theta/4$, and every point in the set is at least distance $1/2$ from $\spt \gamma_0^\varepsilon$, so that
    \begin{equation*}
        W_q^q(\gamma_0^\varepsilon, \gamma_1^\varepsilon) \geq 2^{-2-q}\varepsilon^\theta, \quad \text{ implying } \quad W_q(\gamma_0^\varepsilon, \gamma_1^\varepsilon) \geq 2^{-3} \varepsilon^{\theta/q}.
    \end{equation*}
    It follows that Theorem \ref{thrm: quant unique general} cannot hold in general with a better exponent.

    Now consider
    \begin{equation*}
        \gamma = (\id, \nabla |x_1|)_\#\tilde\rho = \gamma^0_0 = \gamma^0_1  \in \Pi(\rho_0, \mu_0).
    \end{equation*}
    By construction, $\spt \gamma \subset \graph(\partial \phi)$. The triangle inequality gives
\begin{equation*}
    \max_{i=0,1} W_q(\gamma, \gamma_i^\varepsilon) \geq \frac{1}{2}W_q(\gamma_0^\varepsilon, \gamma_1^\varepsilon) \geq 2^{-4}\varepsilon^{\theta/q}.
\end{equation*}
Together with the above bounds on $W_r(\tilde\rho,\rho_\varepsilon)$, the exponent in Theorem \ref{thrm: same subgrad plan control} is also sharp for any $q, r$ and $\theta$.
\end{proof}

\subsection{Sharpness for stability of maps and plans}

In our second example, we cannot use the crude support distance lower bound, and $W_q(\gamma_0, \gamma_1)$ is instead lower bounded by a form of measure theoretic pigeonhole principle. Roughly speaking, we show that although for every point in $\spt \gamma_0$ there is a nearby point in $\spt \gamma_1$, the mass given locally to certain regions of the product space by each plan is different, and so any coupling between the two plans must couple a certain amount of mass across a long distance.

The example shows that even for the fixed source estimate, the plan stability exponent is optimal. This automatically implies the same upper bound on the exponent for the distance between maps, since $W_q(\gamma_{\rho \to \mu_0}, \gamma_{\rho \to \mu_1}) \leq \|T_{\rho \to \mu_0} - T_{\rho \to \mu_1}\|_{L^q(\rho)}$. Thus although we do not explicitly calculate the map distance, this example also gives sharpness of the map stability, Theorem \ref{thrm: bi marginal map stability}.
\begin{proposition}
    \label{prop: sharp exponent stability}
    Let $d\geq 2$ and $\theta \in (0, 1]$. Let $\X$ and  $\Y$ be compact convex subsets of $\R^d$ with nonempty interior. There exists a probability measure $\rho$ on $\X$ satisfying \ref{eq: assumption eqn} for some $M>0$, and measures $\mu_0, (\mu_i)_{i\geq 1} \in \PP(\Y)$ such that for any $q \geq 1$,  $r \in [1, \infty]$ and
    \begin{equation*}
        \alpha > \begin{cases}
        \frac{r\theta}{q(r+\theta)} & r< \infty,\\
        \frac{\theta}{q} & r = \infty,
        \end{cases}
    \end{equation*}
    denoting by $\gamma_i \in \Gamma(\rho, \mu_i)$ the unique optimal transport plan,
    \begin{equation}
        \label{eq: sup the stability fails}
        \limsup_{i \to \infty} \frac{W_q(\gamma_{\rho \to \mu_0}, \gamma_{\rho \to \mu_i})}{W_r(\mu_0, \mu_i)^\alpha} = \infty. 
    \end{equation}
    Consequently the exponents in Theorems~\ref{thrm: bi marginal map stability} and \ref{thrm: bi marginal plan stability} are sharp.
\end{proposition}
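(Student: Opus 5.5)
The construction follows the example of \cite{letrouit2026unstable}, which is the case $\theta=1$, with the uniform source replaced by the singular weight already used in Proposition~\ref{prop: sharpness quant unique}. After an affine change of variables we may assume $[-1,1]^d\subset\X$ and $[-2,2]^d\subset\Y$. We take
\begin{equation*}
\di\rho(x)=\frac{\theta}{2^d}|x_1|^{\theta-1}\chi_{[-1,1]^d}(x)\di x ,
\end{equation*}
which satisfies \ref{eq: assumption eqn} by the computation in Proposition~\ref{prop: sharpness quant unique}. We take the target $\mu_0=\frac12(\delta_{e_1}+\delta_{-e_1})$. Then $\gamma_{\rho\to\mu_0}$ is induced by $\nabla|x_1|=\sign(x_1)e_1$, so the interface separating the two cells is the hyperplane $\{x_1=0\}$. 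The relevant feature of $\rho$ is that a slab $\{|x_1|\le h\}$ carries mass of order $h^\theta$ rather than $h$.

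\emph{Construction of $\mu_i$.} For a scale $h=h_i\downarrow0$, we replace each Dirac mass $\delta_{\pm e_1}$ by a cloud of $N=N_i$ atoms. The atoms are placed within distance $\ell$ of $\pm e_1$ and indexed by a partition of the $x_2$ interval $[-1,1]$ into $N$ strips of width $1/N$. Their positions and weights are chosen so that the Laguerre (power) cells of the new target are exactly prescribed. On the $k$th strip, the region sent to the cloud near $+e_1$ should be $\{x_1>(-1)^k h\}$, up to a transition layer of width $\ll 1/N$. In other words, the interface becomes a square wave of amplitude $h$ and period $2/N$ in the $x_2$ direction. The weights must be fixed so that each atom receives exactly its $\rho$-mass. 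Alternating the sign of the shift between strips makes the total mass on each side equal to $\frac12$, which keeps the perturbation local.

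We then bound $W_r(\mu_0,\mu_i)$. Moving mass within a cloud only costs the displacement $\ell$, so $W_r(\mu_0,\mu_i)\le\ell$. We also need to check that the geometry forces $\ell\sim h N^{-1}$, or better. The interface normals in adjacent strips must tilt so as to produce steps of height $h$ across a transition of width $\lesssim1/N$, and this requires atom displacements of order $hN\cdot(\text{width})$. We then tune the transition width so that $\ell\approx h^{1+\theta/r}$, which reproduces the $r$-dependence of Proposition~\ref{prop: sharpness quant unique}.

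\emph{Lower bound on the plans, which is the main obstacle.} Here the support-distance bound fails, because every point of $\spt\gamma_i$ is close to $\spt\gamma_0$ once we allow $x$-displacements of order $1/N$. Instead we use a pigeonhole argument. Consider the boxes $Q_k^\pm=\{\text{$k$th strip}\}\times\{|x_1|\le h\}\times B_{1/2}(\pm e_1)$. On each box the two plans give masses differing by $\gtrsim h^\theta/N$, and all these discrepancies share the same sign on alternating strips. Any coupling $\pi\in\Pi(\gamma_0,\gamma_i)$ must therefore move this excess mass either by distance $\ge1$ in $y$, or by distance $\ge c/N$ in $x$ to a neighbouring strip. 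This gives
\begin{equation*}
W_q^q(\gamma_0,\gamma_i)\gtrsim h^\theta\min(1,N^{-q}) .
\end{equation*}
With $N$ bounded this is $\gtrsim h^\theta$. Taking $N$ of order one while still obtaining $\ell$ small is precisely the delicate balancing in \cite{letrouit2026unstable}. I would first try $N$ fixed and obtain the gap purely from weight choices, checking the needed $\ell\lesssim h^{(r+\theta)/r}$.

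Granting these bounds, $W_q\gtrsim h^{\theta/q}$ while $W_r(\mu_0,\mu_i)^{\alpha}\lesssim h^{\alpha(r+\theta)/r}$. For every $\alpha>\frac{r\theta}{q(r+\theta)}$ the ratio blows up as $h\to0$, which proves \eqref{eq: sup the stability fails}. The case $r=\infty$ is the same argument with $\ell\lesssim h$. Sharpness for maps follows from $W_q(\gamma_{\rho\to\mu_0},\gamma_{\rho\to\mu_i})\le\|T_{\rho\to\mu_0}-T_{\rho\to\mu_i}\|_{L^q(\rho)}$.
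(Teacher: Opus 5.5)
Your proposal has a genuine gap, and it sits in the target-side estimate. For finite $r$, the bound $W_r(\mu_0,\mu_i)\lesssim h^{1+\theta/r}$ cannot hold in the construction you describe, and no choice of transition width rescues it.

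\textbf{Why the construction fails.} Suppose all $+$ atoms lie within $\ell$ of $e_1$ and all $-$ atoms within $\ell$ of $-e_1$. Write $\Phi^\pm(x)=\max_y(\langle x,y\rangle-\psi(y))$ over the respective cloud, and set $u=\Phi^+-\Phi^-$. Then $\partial_1u\geq 2-2\ell$ and $|\partial_ju|\leq 2\ell$ for $j\geq2$. So the interface is a graph $x_1=g(x_2,\dots,x_d)$ with Lipschitz constant $\lesssim\ell$.
\begin{itemize}
\item A jump of $2h$ across a layer of width $w\lesssim 1/N$ therefore needs $\ell\gtrsim h/w\gtrsim hN$, not $hN^{-1}$. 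In particular $\ell\ll h$ is impossible.
\item More decisively, each cloud has mass $\frac12$ and $\rho$ is symmetric in $x_1$, so $g$ must vanish somewhere. Hence $|g|\lesssim\ell$ everywhere, the switched mass is $O(\ell^\theta)$, and $W_q(\gamma_0,\gamma_i)\leq\|T_0-T_i\|_{L^q(\rho)}\lesssim\ell^{\theta/q}$.
\item On the other hand, in your design an atom displaced by $\ell$ carries mass $\approx 1/(2N)$, so
\begin{equation*}
W_r^r(\mu_0,\mu_i)\geq\int\dist\big(y,\{e_1,-e_1\}\big)^r\di\mu_i(y)\gtrsim\frac{\ell^r}{N}.
\end{equation*}
\end{itemize}
For bounded $N$ this gives $W_q(\gamma_0,\gamma_i)/W_r(\mu_0,\mu_i)^\alpha\lesssim N^{\alpha/r}\ell^{\theta/q-\alpha}$, which stays bounded for every $\alpha\leq\theta/q$. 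So your family reaches at best the $r=\infty$ threshold $\theta/q$, which a single tilt $\mu_i=\frac12(\delta_{e_1+he_2}+\delta_{-e_1-he_2})$ already gives. It never reaches $\frac{r\theta}{q(r+\theta)}<\frac{\theta}{q}$, which is the actual content of the proposition. Letting $N$ grow does not help, since your own pigeonhole bound then degrades by a factor $N^{-q}$.

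\textbf{What is missing is localisation.} To get $W_r(\mu_0,\mu_i)\approx h^{1+\theta/r}$, you may only displace target mass of the same order $h^\theta$ as the mass that switches, by a distance $\approx h$. This forces two things:
\begin{itemize}
\item the source mass served by the perturbed atoms must itself lie within $O(h)$ of the interface;
\item the flipped mass must have no cheap re-matching in $x$.
\end{itemize}
The paper achieves this by supporting $\rho$ on separated thin cells $R_i^\pm$ of thickness $h_i$, weighted by $|x_2|^{\theta-1}$. Each pair of cells is served by its own atoms $y_i^\pm$ of mass $\sigma_i\approx l_ih_i^\theta$ at distance $a_i$.
\begin{itemize}
\item Moving only that pair by $h_i$ turns the bisector enough to flip half of each cell, so $W_r(\mu_0,\mu_i)\leq 2^{1/r}h_i\sigma_i^{1/r}$.
\item The separation $2a_i$ between $R_i^+$ and $R_i^-$ gives the pigeonhole bound $W_q(\gamma_0,\gamma_i)\geq 2a_i\sigma_i^{1/q}$.
\end{itemize}
Because $\rho$ must stay fixed while $h_i\to0$, infinitely many such pairs are packed into one compactly supported source, with $h_i=2^{-(i+i_0)}$ and $a_i=l_i=(i+i_0)^{-2}$. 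The exponential decay of $h_i$ against the polynomial decay of $a_i,l_i$ then isolates the exponent of $h_i$.
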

\begin{proof}
    It suffices to construct one such compactly supported example in $\R^d$; then since the quadratic optimal transport problem is invariant with respect to dilations and translations of source and target, and $\X, \Y$ have non-empty interior, the same example can be dilated and translated inside these sets.  We use a modified form of the example given in \cite[Theorem 2]{letrouit2026unstable}. We construct $\rho$ out of multiple copies of cells of the form
    \begin{equation*}
        Q(l, h) := \left\{ z \in \R^d: 0 < z_1 < l, |z_2| <h/2, |z_k| < 1/2\; (3 \leq k \leq d)\right\}
    \end{equation*}
     for some $l, h>0$. (If $d=2$ we omit the last condition.) For a centre $c_i \in \R$ and offset $a_i>0$, a horizontal length $l_i>0$ and vertical thickness $h_i>0$, define
     \begin{align*}
        R_i^+ =& (c_i + a_i, 0,...,0) + Q(l_i, h_i),\\
        R_i^- =& (c_i - a_i, 0,...,0) - Q(l_i, h_i)
     \end{align*}
     in the sense of Minkowski. We will choose $a_i$ and $c_i$ at scales such that all cells are separated. Now for a collection of positive lengths $(l_i, h_i, a_i)_{i \in \mathbb{N}}$ and centres $(c_i)_{i \in \mathbb{N}}$ such that $\sum_{i \geq 1} l_i h_i^\theta < +\infty$ to be fixed later, we denote
     \begin{equation*}
        U := \bigcup_{i \geq 1} (R_i^+ \cup R_i^-), \quad \rho(x) := Z_\theta^{-1}|x_2|^{\theta - 1} \chi_U(x),
     \end{equation*}
     where 
     \begin{equation*}
        Z_\theta = \frac{2^{2-\theta}}{\theta} \sum_{i \geq 1} l_i h_i^\theta
     \end{equation*}
     is a normalising constant so that $\rho$ is a probability density. For any ball $B_\eta(x)$,
     \begin{align*}
        \rho(B_\eta(x)) \leq & \int_{x_2 - \eta}^{x_2 + \eta} \int_{B_\eta(x_{-2})} \rho(x) \di x_1 \di x_3 ... \di x_d \di x_2 \\
        \leq & \eta^{d-1}\beta_{d-1} Z_\theta^{-1} \int_{x_2 - \eta}^{x_2 +\eta} |x_2|^{\theta-1} \di x_2 \leq \frac{2}{\theta Z_\theta} \beta_{d-1} \eta^{d-1 + \theta},
     \end{align*}
     where we used that $\int_{x_2 - \eta}^{x_2 +\eta} |x_2|^{\theta-1} \di x_2$ is maximised at $x_2 = 0$. Hence $\rho$ satisfies \ref{eq: assumption eqn} with $M = 2\theta^{-1} Z_\theta^{-1} \beta_{d-1}$. Define
     \begin{equation*}
        \sigma_i := \rho(R_i^+) = \rho(R_i^-) = c_{\rho, \theta} l_i h_i^\theta, \quad \text{ where } c_{\rho, \theta} = \frac{2^{1-\theta}}{\theta Z_\theta}.
     \end{equation*}
     We introduce target points for the mass in the $i$th cells:
     \begin{equation*}
        y_i^+ = (c_i, a_i, 0,...,0), \quad\quad y_i^- = (c_i, -a_i, 0,...,0),
     \end{equation*}
     and their perturbations by $h_i$ in the $x_1$ coordinate:
     \begin{equation*}
        \hat y_i^+ = (c_i +h_i, a_i, 0,...,0), \quad\quad \hat y_i^- = (c_i -h_i, -a_i, 0,...,0).
     \end{equation*}
     We claim that every point $x^+ \in R_i^+$ is closer to $\hat y_i^+$ than $\hat y_i^-$, and similarly every $x^- \in R_i^-$ is closer to $\hat y_i^-$ than $\hat y_i^+$. Indeed, write $x^+ = (c_i + a_i, 0,...,0) + z_i^+$ for some $z_i^+ \in Q(l_i, h_i)$. The worst case is given by $z_i^+ = (0, -h_i/2,...)$ where the remaining coordinates contribute the common term $\sum_{k=3}^d|x_k^+|^2$ to both squared distances. In this limiting case,
     \begin{align*}
        \|x^+ - \hat y_i^+\|^2 =& (h_i- a_i)^2 + (a_i + h_i/2)^2 + \sum_{k=3}^d|x_k^+|^2,\\
        \|x^+ - \hat y_i^-\|^2 =& (h_i + a_i)^2 + (a_i - h_i/2)^2 + \sum_{k=3}^d|x_k^+|^2,
     \end{align*}
     so that
     \begin{equation*}
        \|x^+ - \hat y_i^-\|^2 - \|x^+ - \hat y_i^+\|^2 = 2a_i h_i >0. 
     \end{equation*}
     We define a reference target measure, as well as the $i$th perturbed target by 
     \begin{align*}
        \mu_0 =& \sum_{j \geq 1} \sigma_j (\delta_{y_j^+} + \delta_{y_j^-})\\
        \mu_i =& \mu_0 + \sigma_i(\delta_{\hat y_i^+} + \delta_{\hat y_i^-} -\delta_{y_i^+} - \delta_{y_i^-}).
     \end{align*}
     The perturbation $\mu_0 \to \mu_i$ simply moves the centred masses near the cells $R_i^+, R_i^-$ to their perturbed positions. We impose that
     \begin{align}
        \forall i \geq 2, \quad \min(c_i - c_{i-1}, c_{i+1} - c_i) \geq& 100 \max(l_i, h_i, a_i),\label{eq: constraint 1 on sequences}\\
         \forall i \geq 1, \quad a_i \geq& 100 h_i. \label{eq: constraint 2 on sequences}
     \end{align}
     The first ensures that clusters with different indices are well separated: the first two coordinate projections of the cells and targets indexed by $i$ lie much closer to each other than to those of any other cluster. The second guarantees that
     \begin{equation*}
        \|y_i^+ - \hat y_i^+\| \leq \|y_i^+ - \hat y_i^-\| \quad \text{ and } \quad \|y_i^- - \hat y_i^-\| \leq \|y_i^- - \hat y_i^+\|.
     \end{equation*}
     Consequently, by coupling  mass via $y_i^+ \leftrightarrow \hat y_i^+$ and $y_i^- \leftrightarrow \hat y_i^-$,
     \begin{equation*}
        W_r(\mu_0, \mu_i) \leq 2^{1/r} h_i \sigma_i^{1/r} \quad (r<+\infty), \qquad W_\infty(\mu_0,\mu_i)\leq h_i.
     \end{equation*}
     Since $\rho$ is absolutely continuous, the optimal plans from $\rho$ to $\mu_i$ are induced by maps. The optimal map from $\rho$ to $\mu_0$ is
     \begin{equation*}
        T_0(x) = \begin{cases}
            y_j^+ & x \in R_j^+ \cup R_j^- \text{ and } x_2 \geq 0,\\
            y_j^- & x \in R_j^+ \cup R_j^- \text{ and } x_2 < 0,
        \end{cases}
     \end{equation*}
     for each $j\geq1$, since every point is sent to the nearest point in $\spt \mu_0$. The optimal map from $\rho$ to $\mu_i$ is
     \begin{equation*}
        T_i(x) = \begin{cases}
            T_0(x) & x \notin R_i^+ \cup R_i^-,\\
            \hat y_i^+ & x \in R_i^+,\\
            \hat y_i^- & x \in R_i^-,\\
        \end{cases}
     \end{equation*}
     as again every point is sent to the nearest point in $\spt \mu_i$ by the above calculations.
     \begin{figure}[ht]
        \centering
        \begin{tikzpicture}[
        x=1cm,
        y=1cm,
        >=Latex,
        line cap=round,
        line join=round,
        every node/.style={font=\small},
        ref flow/.style={-{Latex[length=2.15mm,width=1.45mm]},draw=targetblue,densely dashed,line width=.65pt},
        pert flow/.style={-{Latex[length=2.45mm,width=1.7mm]},draw=sourcecolour,line width=.82pt},
        reference target/.style={circle,draw=targetblue,fill=white,line width=.72pt,minimum size=5.1pt,inner sep=0pt},
        perturbed target/.style={circle,draw=white,fill=sourcecolour,line width=.45pt,minimum size=5.4pt,inner sep=0pt},
        dimension/.style={<->,draw=black!62,line width=.48pt,{Latex[length=1.5mm]}-{Latex[length=1.5mm]}}
        ]

        \path[use as bounding box] (0,-1.62) rectangle (13.2,2.55);

        \newcommand{\densityblock}[3]{%
        \shade[bottom color=cellgray!45,top color=cellgray!4]
            (#1,0) rectangle (#2,#3);
        \shade[top color=cellgray!45,bottom color=cellgray!4]
            (#1,0) rectangle (#2,-#3);
        \draw[black!72,line width=.52pt] (#1,-#3) rectangle (#2,#3);
        \draw[black!38,densely dotted,line width=.42pt] (#1,0)--(#2,0);
        }

        \begin{scope}[shift={(3.30,0)}]
        \begin{scope}[scale=.90,transform shape]
        \begin{scope}[shift={(-3.355,0)}]

        \draw[black!55,densely dotted,line width=.5pt,-{Latex[length=1.8mm]}]
            (.28,0)--(6.43,0) node[below left=1pt,font=\footnotesize] {$x_1$};
        \draw[black!55,densely dotted,line width=.5pt,-{Latex[length=1.8mm]}]
            (.50,-.58)--(.50,.82) node[below right=1pt,font=\footnotesize] {$x_2$};

        \densityblock{.82}{1.58}{.24}
        \densityblock{2.30}{3.06}{.24}
        \draw[decorate,decoration={brace,amplitude=4pt},line width=.55pt]
            (.82,.45)--(3.06,.45)
            node[midway,above=4pt,font=\footnotesize] {$R_i^-\cup R_i^+$};

        \densityblock{3.68}{4.10}{.16}
        \densityblock{4.56}{4.98}{.16}
        \densityblock{5.30}{5.54}{.10}
        \densityblock{5.82}{6.06}{.10}
        \end{scope}
        \end{scope}
        \end{scope}

        \draw[black!18,line width=.45pt] (6.60,-1.05)--(6.60,2.55);

        \begin{scope}[shift={(9.90,.15)}]

        \densityblock{-2.70}{-1.25}{.27}
        \densityblock{ 1.25}{ 2.70}{.27}
        \node[font=\footnotesize,black!68,fill=white,fill opacity=0,text opacity=1,inner sep=1.2pt]
            at (-1.98,0) {$R_i^-$};
        \node[font=\footnotesize,black!68,fill=white,fill opacity=0,text opacity=1,inner sep=1.2pt]
            at ( 1.98,0) {$R_i^+$};

        \node[reference target] (yp) at (0,1.25) {};
        \node[perturbed target] (hyp) at (.54,1.25) {};
        \node[reference target] (ym) at (0,-1.25) {};
        \node[perturbed target] (hym) at (-.54,-1.25) {};
        \node[font=\footnotesize,above left=2pt and 1pt, text=targetblue] at (yp) {$y_i^+$};
        \node[font=\footnotesize,above right=2pt and 1pt,text=sourcecolour] at (hyp) {$\widehat y_i^+$};
        \node[font=\footnotesize,below right=2pt and 1pt, text=targetblue] at (ym) {$y_i^-$};
        \node[font=\footnotesize,below left=2pt and 1pt,text=sourcecolour] at (hym) {$\widehat y_i^-$};

        \draw[ref flow,shorten >=3pt] (-2.05,.17) to[out=40,in=188] (yp);
        \draw[ref flow,shorten >=3pt] ( 2.05,.17) to[out=140,in=-8] (yp);
        \draw[ref flow,shorten >=3pt] (-2.05,-.17) to[out=-40,in=172] (ym);
        \draw[ref flow,shorten >=3pt] ( 2.05,-.17) to[out=-140,in=8] (ym);

        \draw[pert flow,shorten >=3pt] (-2.47,-.02) to[out=-34,in=166] (hym);
        \draw[pert flow,shorten >=3pt] ( 2.47,.02) to[out=146,in=-14] (hyp);

        \draw[draw=black!62,line width=.48pt,-{Latex[length=1.5mm]},shorten >=1pt]
            (0,0)--(-1.25,0)
            node[pos=.45,anchor=south,inner sep=2pt,yshift=0pt,font=\footnotesize] {$a_i$};
        \draw[draw=black!62,line width=.48pt,-{Latex[length=1.5mm]},shorten >=1pt]
            (0,0)--(1.25,0)
            node[pos=.45,anchor=north,inner sep=2pt,yshift=0pt,font=\footnotesize] {$a_i$};
        \draw[draw=black!62,line width=.48pt,-{Latex[length=1.5mm]},shorten >=3pt]
            (0,0)--(yp)
            node[pos=.45,anchor=west,inner sep=2pt,xshift=0pt,font=\footnotesize] {$a_i$};
        \draw[draw=black!62,line width=.48pt,-{Latex[length=1.5mm]},shorten >=3pt]
            (0,0)--(ym)
            node[pos=.45,anchor=east,inner sep=2pt,xshift=0pt,font=\footnotesize] {$a_i$};
        \draw[dimension] (1.25,.49)--(2.70,.49)
            node[midway,above=1.4pt,font=\footnotesize] {$l_i$};
        \draw[dimension] (2.84,-.27)--(2.84,.27)
            node[midway,right=1.1pt,font=\footnotesize] {$h_i$};
        \draw[dimension] (0,1.54)--(.54,1.54)
            node[midway,above=1.4pt,font=\footnotesize] {$h_i$};
        \end{scope}

        \end{tikzpicture}
        \caption{Left: the support of $\rho$. Right: a magnified view of the $i$th pair of cells, with the optimal maps \textcolor{targetblue}{$T_0$: $\rho \to \mu_0$ (blue, dashed)} and \textcolor{sourcecolour}{$T_i: \rho \to\mu_i$ (pink, solid)}.}
        \label{fig: stability example}
     \end{figure}
     
    We now prove a lower bound on the $W_q$ distance between the transport plans $\gamma_i = (\id, T_i)_\#\rho$. Unlike in the sharp quantitative uniqueness example, the supports are not cleanly separated. Set
    \begin{equation*}
        G_i = (R_i^+ \times \{ \hat y_i^+\}) \cup (R_i^- \times \{ \hat y_i^-\}), \quad \gamma_i(G_i) = 2 \sigma_i,
    \end{equation*}
    and
    \begin{align*}
        H_i = \big((R_i^+ \cap \{ x_2 \geq 0\}) \times \{ y_i^+\}\big) \cup& \big((R_i^- \cap \{ x_2 \leq 0\}) \times \{y_i^-\}\big), \\
        \gamma_0(H_i) =& \sigma_i.
    \end{align*}
    Consider any coupling $\pi \in \Pi(\gamma_0, \gamma_i)$, with variables $(x_0,y_0,x_i,y_i)$. For $\pi$-almost every such point with $(x_i,y_i)\in G_i$, if
    \begin{equation*}
        \max(\|x_0 - x_i\|, \| y_0 - y_i\|) < 2 a_i,
    \end{equation*}
    we must have $(x_0, y_0) \in H_i$ by the separation of cells and $a_i \geq 100 h_i$. It follows that
    \begin{equation*}
        \pi( \max(\|x_0 - x_i\|, \|y_0 - y_i\|) \geq 2a_i) \geq \sigma_i
    \end{equation*}
    for any $\pi \in \Pi(\gamma_0, \gamma_i)$ -- this is effectively a measure theoretic pigeonhole principle, for couplings rather than bijections. It follows that
    \begin{equation*}
        W_q(\gamma_0, \gamma_i) \geq 2 a_i \sigma_i^{1/q}.
    \end{equation*}
    We now fix $(l_i, h_i, c_i, a_i)_{i \in \mathbb{N}}$, taking
    \begin{equation*}
        a_i = l_i = (i+i_0)^{-2}, \quad h_i = 2^{-(i+i_0)}, \quad c_1=0,\quad c_{i+1} - c_i = 100(a_i + a_{i+1}).
    \end{equation*}
    We fix $i_0$ large enough that both conditions \eqref{eq: constraint 1 on sequences} and \eqref{eq: constraint 2 on sequences} hold for all $i\geq1$. All cells, sums and the normalising constant $Z_\theta$ use these sequences. Since
    \begin{equation*}
        \sum_{i \geq 1} a_i < \infty \quad \text{ and } \quad \sum_{i \geq1} l_i h_i^\theta < \infty,
    \end{equation*}
    $\rho$ is a compactly supported probability measure, with $\spt \rho = \overline U$. Assume that for some constant $C$ independent of $i$, and some $\alpha >0$,
    \begin{equation*}
        W_q(\gamma_0, \gamma_i) \leq C W_r(\mu_0, \mu_i)^\alpha. 
    \end{equation*}
    Then, up to constants independent of $i$,
    \begin{equation*}
        W_r(\mu_0, \mu_i)^\alpha \leq 2 c_{\rho, \theta, r} \big(l_i^{1/r}h_i^{(r+\theta)/r}\big)^\alpha 
    \end{equation*}
    and
    \begin{equation*}
        W_q(\gamma_0, \gamma_i) \geq 2 c_{\rho, \theta, q} l_i^{(q+1)/q} h_i^{\theta/q}
    \end{equation*}
    so that for all $i$,
    \begin{equation*}
        l_i^{\left(\frac{q+1}{q} - \frac{\alpha}{r}\right)} h_i^{\left(\frac{\theta}{q} - \frac{\alpha(r+\theta)}{r}\right)} \leq C_{\rho, \theta, r, q}.
    \end{equation*}
    Since $h_i$ decays exponentially fast compared to the polynomial $l_i$, the only way this is possible is if the exponent is nonnegative, i.e.
    \begin{equation*}
        \frac{\theta}{q} - \frac{\alpha(r+\theta)}{r} \geq 0 \implies \alpha \leq \frac{r\theta}{q(r+\theta)}.
    \end{equation*}
    When $r=\infty$, the bound $W_\infty(\mu_0,\mu_i)\leq h_i$ gives
    \begin{equation*}
        \frac{W_q(\gamma_0,\gamma_i)}{W_\infty(\mu_0,\mu_i)^\alpha}
        \geq 2c_{\rho,\theta}^{1/q}a_i l_i^{1/q}h_i^{\theta/q-\alpha}\longrightarrow\infty
        \quad\text{if }\alpha>\theta/q.
    \end{equation*}
    Consequently, conclusions of Theorems \ref{thrm: bi marginal map stability} and \ref{thrm: bi marginal plan stability} cannot hold with any exponent larger than the one proven.
\end{proof}

\subsection{Sharpness for coercivity}

The above example can be adapted to show that the coercivity theorems, Theorems \ref{thrm: coercivity general marginals} and \ref{thrm: coercivity rough measures}, are also sharp in exponent. The general form of this is already sharp as a consequence of the sharpness for stability with $r=1$. The example below uses fixed marginals, establishing that the exponent in terms of the suboptimality deficit is also sharp.

\begin{proposition}
    \label{prop: sharp exponent coercivity}
    Let $d\geq 2$ and $\theta \in (0, 1]$. Let $\X$ and $\Y$ be compact convex sets in $\R^d$ with nonempty interior. There exists a probability measure $\rho$ on $\X$ satisfying \ref{eq: assumption eqn} for some $M>0$, a measure $\mu \in \PP(\Y)$, and transport plans $(\gamma_i)_{i \in \mathbb{N}} \in \Pi(\rho, \mu)$ such that for any $q \in [1, +\infty)$ and
    \begin{equation*}
    \alpha > \frac{\theta}{q(1+\theta)},
    \end{equation*}
    it holds that
    \begin{equation}
    \label{eq: sup the coercivity fails}
    \limsup_{i \to \infty} \frac{W_q(\gamma_{\rho \to \mu}, \gamma_i)}{\left(\int \langle x, y \rangle \di (\gamma_{\rho \to \mu} - \gamma_i)\right)^\alpha} = \infty. 
    \end{equation}
    Consequently the exponents in Theorems~\ref{thrm: coercivity general marginals} and \ref{thrm: coercivity rough measures} are sharp.
\end{proposition}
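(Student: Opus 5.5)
We reuse the construction in the proof of Proposition~\ref{prop: sharp exponent stability}. The measure $\rho$ is the same density $Z_\theta^{-1}|x_2|^{\theta-1}\chi_U$ on the cells $R_j^\pm$, and it satisfies \ref{eq: assumption eqn}. We keep the unperturbed target $\mu=\mu_0=\sum_j\sigma_j(\delta_{y_j^+}+\delta_{y_j^-})$, whose optimal plan is $\gamma_{\rho\to\mu}=(\id,T_0)_\#\rho$. Here $T_0$ sends $\{x_2\ge0\}$ in cell $j$ to $y_j^+$ and $\{x_2<0\}$ to $y_j^-$.

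For the suboptimal plans $\gamma_i\in\Pi(\rho,\mu)$ we leave $T_0$ unchanged outside $R_i^+\cup R_i^-$. Inside, we swap a slab of mass $\sigma_i$ per side. We send all of $R_i^+$ to $y_i^+$ and all of $R_i^-$ to $y_i^-$. Each target still receives mass $\sigma_i$, so the marginals are preserved and $\gamma_i=(\id,\tilde T_i)_\#\rho\in\Pi(\rho,\mu)$.

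We then compute the suboptimality gap. Only points with $x_2\in(-h_i/2,h_i/2)$ change target: a point with $x_2<0$ in $R_i^+$ goes to $y_i^+$ instead of $y_i^-$, and symmetrically in $R_i^-$. The change in $\langle x,y\rangle$ for such a point is $\langle x,y_i^--y_i^+\rangle=-2a_ix_2=2a_i|x_2|$ (with the reversed sign convention in $R_i^-$). Hence
\begin{equation*}
\int\langle x,y\rangle\di(\gamma_{\rho\to\mu}-\gamma_i)=2\cdot 2a_i\int_{\text{half of }R_i^+}|x_2|\di\rho\lesssim a_i\,l_i\,h_i^{1+\theta},
\end{equation*}
using $\int_0^{h/2}x_2\,x_2^{\theta-1}\di x_2\sim h^{1+\theta}$ and a cell width of order $1$ in the remaining coordinates.

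For the lower bound we use the pigeonhole argument from the previous proof. Under $\gamma_i$ the set $(R_i^+\cap\{x_2<0\})\times\{y_i^+\}$ has mass of order $\sigma_i\sim l_ih_i^\theta$. Under $\gamma_{\rho\to\mu}$, every point within distance $2a_i$ of this set lies in $R_i^\pm\times\{y_i^-\}$ restricted to $\{x_2<0\}$, or in $R_i^\pm\times\{y_i^+\}$ restricted to $\{x_2\ge0\}$. Both pieces are at distance about $2a_i$ in the $y$ variable or far from the relevant $x$ region. One checks that any coupling must move mass at least $c\,\sigma_i$ a distance at least $c\,a_i$. Since $y_i^+$ and $y_i^-$ are $2a_i$ apart, the cleanest way to see this is to observe that $\gamma_{\rho\to\mu}$ gives zero mass to $\{x_2<0\}\times\{y_i^+\}$ near cell $i$. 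A coupling pairing this mass with nearby points would therefore need the $x$ coordinates to cross $x_2=0$ while $y$ remains $y_i^+$. That is allowed, so we refine: use the Kantorovich--Rubinstein lower bound with a Lipschitz test function. Take $f(x,y)=\min(\operatorname{dist}(y,y_i^-),a_i)\cdot g(x_2)$ or, more simply, a test function $f(x,y)=\langle e_2, y\rangle_+$ weighted by $\chi$ near the cell. This gives $W_1(\gamma_{\rho\to\mu},\gamma_i)\gtrsim a_i\sigma_i$, and hence $W_q\ge W_1\gtrsim a_i\sigma_i$. Combined with boundedness of the supports, this yields $W_q^q\gtrsim a_i^{q}\sigma_i$ up to constants depending on $a_i$. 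We will verify via the pigeonhole argument directly: mass $\sigma_i$ of $\gamma_i$ sits at $y_i^+$ over $x_2<0$ in $R_i^+$, while $\gamma_{\rho\to\mu}$ puts only the $x_2\ge0$ half at $y_i^+$. Since $\gamma_i$ puts total mass $2\sigma_i$ at $y_i^+$ near cell $i$ and $\gamma_{\rho\to\mu}$ also puts $2\sigma_i$ there, the imbalance must instead be detected in the $x$ variable. The obstacle is therefore to separate the halves. To do this we modify the construction so that the swap is not mass balanced by symmetry: exchange targets only on the strips $\{|x_2|<h_i/2\}$ while the cells' bulk is elsewhere. Concretely, we take cells whose $x_2$-extent is $1$ while the density singularity $|x_2|^{\theta-1}$ is on $x_2=0$, and swap only the strip $|x_2|<h_i$. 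The gap is then $\sim a_il_ih_i^{1+\theta}$, and the moved mass $\sim l_ih_i^\theta$ must travel a distance $\gtrsim\min(a_i,\cdot)$ of order $a_i$ in $y$, because nearby $\gamma_{\rho\to\mu}$ mass over that strip sits at the other target.

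Finally we compare exponents. We have $W_q\gtrsim a_i(l_ih_i^\theta)^{1/q}$, while the gap raised to the power $\alpha$ is $\lesssim(a_il_ih_i^{1+\theta})^\alpha$. With $a_i,l_i$ polynomial and $h_i=2^{-(i+i_0)}$, the ratio diverges whenever $\theta/q<\alpha(1+\theta)$, i.e. whenever $\alpha>\theta/(q(1+\theta))$. Dilation and translation place the example in $\X,\Y$. The main obstacle is making the pigeonhole lower bound rigorous, which requires choosing the swap so that the $\gamma_i$ mass has no nearby counterpart in $\spt\gamma_{\rho\to\mu}$ with matching mass.
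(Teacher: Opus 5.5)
Your choice of $\rho$, of $\mu=\mu_0$ and of the suboptimal map (all of $R_i^+$ to $y_i^+$, all of $R_i^-$ to $y_i^-$) matches the paper. So do your computation of the gap $\int\langle x,y\rangle\di(\gamma_{\rho\to\mu}-\gamma_i)\sim a_il_ih_i^{1+\theta}$ and your final exponent comparison. The genuine gap is the lower bound $W_q(\gamma_{\rho\to\mu},\gamma_i)\gtrsim a_i\sigma_i^{1/q}$, which you never establish.

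You look for the mass imbalance between the halves $\{x_2<0\}$ and $\{x_2\geq0\}$ of $R_i^+$. These halves are adjacent, so, as you observe yourself, a coupling can shift mass across $x_2=0$ at cost $O(h_i)$. Your modified construction (thick cells, swap only on a strip) has the same defect. Just across $x_2=0$ the optimal plan again carries mass at $y_i^+$, so the claim that ``nearby $\gamma_{\rho\to\mu}$ mass over that strip sits at the other target'' fails for the reason you had already identified.

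The Kantorovich--Rubinstein detour does not rescue the argument either. A bound $W_1\gtrsim a_i\sigma_i$ only gives $W_q\geq W_1\gtrsim a_i\sigma_i$, i.e.\ $W_q^q\gtrsim a_i^q\sigma_i^q$. Bounded supports give $W_q^q\leq D^{q-1}W_1$, which is an upper bound, not the lower bound $W_q^q\gtrsim a_i^q\sigma_i$ you assert. With $W_q\gtrsim a_i\sigma_i$ alone, the ratio diverges only for $\alpha>\theta/(1+\theta)$, so you would get sharpness only for $q=1$.

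The missing observation is that the imbalance lives between the two \emph{cells}. $R_i^+$ and $R_i^-$ are separated by exactly $2a_i$ in the $x_1$ direction, just as $y_i^+$ and $y_i^-$ are separated by $2a_i$. Under $\gamma_i$, the set $G_i=(R_i^+\times\{y_i^+\})\cup(R_i^-\times\{y_i^-\})$ has mass $2\sigma_i$. By the cluster separation, every point of $\spt\gamma_{\rho\to\mu}$ that lies within distance less than $2a_i$ of a point of $G_i$ in both the $x$ and the $y$ variable belongs, up to a null set, to
\[
H_i=\big((R_i^+\cap\{x_2\geq0\})\times\{y_i^+\}\big)\cup\big((R_i^-\cap\{x_2\leq0\})\times\{y_i^-\}\big).
\]
This set has $\gamma_{\rho\to\mu}$-mass only $\sigma_i$. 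Hence any coupling moves mass at least $\sigma_i$ over a distance at least $2a_i$, so $W_q(\gamma_{\rho\to\mu},\gamma_i)\geq 2a_i\sigma_i^{1/q}$. Equality holds, via the coupling that is the identity in $x$. This is the paper's pigeonhole argument, and it works in the original construction with no modification.

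Minor slips: $\gamma_i$ gives mass $\sigma_i/2$, not $\sigma_i$, to $(R_i^+\cap\{x_2<0\})\times\{y_i^+\}$. The total mass at $y_i^+$ is $\sigma_i$, not $2\sigma_i$.
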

\begin{proof}
    We use the same construction as for the stability, taking the same $\rho$, and $\mu= \mu_0$. The optimal plan is induced by the map $T_0$ already calculated. For a candidate suboptimal plan, we choose $\gamma_i \in \Pi(\rho, \mu)$ induced by the (non-optimal) map
    \begin{equation*}
        T_i(x) = \begin{cases}
            T_0(x) & x \notin R_i^+ \cup R_i^-\\
            y_i^+ & x \in R_i^+\\
            y_i^- & x \in R_i^-.
        \end{cases}
    \end{equation*}
    Using the same pigeonhole argument lower bound as for the stability, we deduce
    \begin{equation*}
        W_q(\gamma_{\rho \to \mu}, \gamma_i) = 2 a_i \sigma_i^{1/q},
    \end{equation*}
    where the equality is a consequence of the upper bound by $\|T_0 - T_i\|_{L^q(\rho)} = 2 a_i \sigma_i^{1/q}$. To compute the excess transport cost, we compute 
    \begin{equation*}
        \Delta_i := \int \|T_i(x) - x\|^2 - \|T_0(x) - x\|^2 \di \rho(x) = \int \|x - y\|^2 \di (\gamma_i - \gamma_{\rho \to \mu}) = 2 \int \langle x, y \rangle \di (\gamma_{\rho \to \mu} - \gamma_i),
    \end{equation*}
    since the marginals are equal. The maps are equal on $R_i^+ \cap \{ x_2 \geq 0\}$ and $R_i^- \cap \{ x_2 \leq 0\}$, and for $x \in R_i^+ \cap \{ x_2 \leq 0\}$ we have
    \begin{align*}
        \|T_i(x) - x\|^2 - \|T_0(x) - x\|^2 =& \|x - y_i^+\|^2 - \|x - y_i^-\|^2\\
        =& (a_i - x_2)^2 - (a_i + x_2)^2 = 4 a_i |x_2|
    \end{align*}
    with the value on $R_i^- \cap \{ x_2 \geq 0\}$ equal by symmetry. Consequently,
    \begin{equation*}
        \Delta_i = 2a_i\int_{R_i^+ \cup R_i^-} |x_2| \di \rho(x) = \frac{4 a_i l_i}{Z_\theta} \int_{-h_i/2}^{h_i/2} |x_2|^\theta \di x_2 = \frac{4}{Z_\theta(1+ \theta)2^\theta} a_i l_i h_i^{1+\theta}.
    \end{equation*}
    Finally, for $\alpha >0$ and the same sequences as the previous proposition,
    \begin{equation*}
        \frac{W_q(\gamma_{\rho \to \mu}, \gamma_i)}{\Delta_i^\alpha} = C_{\rho, \theta, q, \alpha} a_i^{\left(\frac{q+1}{q} - 2 \alpha\right)} h_i^{\left(\frac{\theta}{q} - \alpha(1+\theta) \right)}
    \end{equation*}
    so that the sequence only remains bounded as $i \to \infty$ when
    \begin{equation*}
        \frac{\theta}{q} - \alpha(1+\theta) \geq 0 \implies \alpha \leq \frac{\theta}{q(1+\theta)}.
    \end{equation*}
    Thus Theorems \ref{thrm: coercivity general marginals} and \ref{thrm: coercivity rough measures} are also sharp in exponent when comparing plans with fixed marginals as required.
\end{proof}

\appendix

\section{Further estimates and an alternative proof of stability}

In this appendix, we present a second proof of Theorem~\ref{thrm: bi marginal plan stability}, which uses the same subgradient plan comparison theorem, Theorem \ref{thrm: same subgrad plan control}.  We also prove a strong convexity result for the Kantorovich functional, generalising the result of \cite{merigot2026sharp} to allow for our exponents $q$ and $\theta$.

\subsection{An alternative proof of Theorem~\ref{thrm: bi marginal plan stability}: bi-marginal plan stability}
We combine the same subgradient control with the bi-marginal map stability Theorem~\ref{thrm: bi marginal map stability}. This recovers the exponent of Theorem~\ref{thrm: bi marginal plan stability}, with a possibly different constant.
Let $\rho_0$ satisfy \ref{eq: assumption eqn} and let $\phi_i$ be Brenier potentials for the $\rho_i\to\mu_i$ transport problems, chosen as in \eqref{eq: double legendre trans eq}. Set $\hat\gamma=(\id,\nabla\phi_1)_\#\rho_0$. Then by the same subgradient control Theorem~\ref{thrm: same subgrad plan control}, with $r=1$,
\begin{equation*}
    W_q(\gamma_{\rho_1 \to \mu_1}, \hat \gamma) \leq C W_1(\rho_0, \rho_1)^{\frac{\theta}{q(1+\theta)}}
\end{equation*}
for some $C(\X,\Y,\theta,q,M)>0$. By coupling the two graph plans through their common source and applying Theorem~\ref{thrm: bi marginal map stability},
\begin{align*}
    W_q(\gamma_{\rho_0 \to \mu_0}, \hat \gamma) &\leq \|\nabla\phi_0-\nabla\phi_1\|_{L^q(\rho_0)}\\
    &\leq C \Big(W_1(\rho_0, \rho_1) + W_1(\mu_0, \mu_1)\Big)^\frac{\theta}{q(1+\theta)}
\end{align*}
for some $C(\X,\Y,\theta,q,M)>0$. Bi-marginal plan stability then follows by the triangle inequality.

\subsection{Strong convexity of the Kantorovich functional via the Jensen divergence}

We study the strong convexity properties of the Kantorovich functional, defined for compact sets $\X, \Y$ as $\K_\rho : \mathcal{C}^0(\Y) \to \R$,
 \begin{equation*}
    \K_\rho(\psi) = \int_{\X} \psi^*(x) \di \rho(x).
\end{equation*}
$\K_\rho$ is convex and 1-Lipschitz in the uniform norm, and when $\rho$ satisfies \ref{eq: assumption eqn}, $\K_\rho$ is everywhere Gâteaux differentiable, with
\begin{equation*}
    \nabla \K_\rho(\psi) = -\nabla\psi^*_\#\rho.
\end{equation*}
For more details see \cite{letrouit2025lectures}. We measure the strong convexity of $\K_\rho$ using the Jensen divergence $J_\rho : \mathcal{C}^0(\Y) \times \mathcal{C}^0(\Y)\to \R$,
\begin{equation*}
    J_\rho(\psi_0, \psi_1) := \frac{1}{2}\Big(\K_\rho(\psi_0) + \K_\rho(\psi_1)\Big) - \K_\rho\Big(\frac{1}{2}(\psi_0 + \psi_1)\Big).
\end{equation*}
The following theorem generalises \cite[Theorem 2.2]{merigot2026sharp}, which corresponds to our result for $\theta = 1$ and $q = 2$.

\begin{samepage}
\begin{theorem}
\label{thrm: strong convexity with general exponents}
    Let $q \in [1, +\infty)$, let $\X, \Y \subset \R^d$ be compact and convex for $d\geq2$, and let $\rho$ be a probability measure on $\X$ satisfying \ref{eq: assumption eqn}. Then for all $\psi_0, \psi_1 \in \C^0(\Y)$,
    \begin{equation}
    \label{eq: kanto strong conv Lq}
        \|\nabla \psi_0^* - \nabla\psi^*_1\|_{L^q(\rho)}^{\frac{q(1 + \theta)}{\theta}}\leq C^{-1}J_\rho(\psi_0, \psi_1) 
    \end{equation}
    where
    \begin{equation*}
        C = \theta(1+\theta)^{-\frac{1+\theta}{\theta}} 4^{-\frac{1+\theta}{\theta}} \left(k_q D_\Y^{q+\theta(q-1)}c_d M D_\X^{d-1}\right)^{-\frac{1}{\theta}}
    \end{equation*}
    and $k_q$ is defined in the proof.
\end{theorem}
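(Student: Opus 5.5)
Set $\psi_m = \frac12(\psi_0+\psi_1)$, $\phi_i = \psi_i^*$ for $i\in\{0,1,m\}$, and $g_i = \nabla\phi_i$, which is defined $\rho$-a.e. since $\rho$ satisfies \ref{eq: assumption eqn}. All $g_i$ take values in $\Y$, because each $\psi_i$ is extended by $+\infty$ off $\Y$. The plan is to show that $J_\rho$ bounds a Fenchel-gap-type integral, and then to turn that gap into an $L^q$ bound using Lemma~\ref{lem: Fenchel bound distance} and Theorem~\ref{thrm: quant reg}, as in Theorem~\ref{thrm: map coercivity wrt dual suboptimality}.

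\emph{Step 1 (pointwise lower bound on the Jensen divergence).} Fix $x$ and use the Fenchel identity $\phi_m(x) = \langle x, g_m(x)\rangle - \psi_m^{**}(g_m(x))$. Since $\psi_m^{**}\leq \psi_m = \frac12(\psi_0+\psi_1)$,
\begin{equation*}
\phi_m(x) \geq \langle x, g_m\rangle - \tfrac12\big(\psi_0(g_m)+\psi_1(g_m)\big).
\end{equation*}
On the other hand, $\psi_i(g_m) \geq \psi_i^{**}(g_m) = \phi_i^*(g_m)$. Combining these gives
\begin{equation*}
\tfrac12\big(\phi_0(x)+\phi_1(x)\big)-\phi_m(x)\geq \tfrac12\big(G_{\phi_0}(x,g_m(x))+G_{\phi_1}(x,g_m(x))\big).
\end{equation*}
Integrating against $\rho$ yields $J_\rho(\psi_0,\psi_1)\geq \frac12\int (G_{\phi_0}+G_{\phi_1})(x,g_m(x))\,\di\rho$.

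\emph{Step 2 (from the gap to the gradient difference).} Apply Lemma~\ref{lem: Fenchel bound distance} with $g=g_i(x)$ and $y=g_m(x)$ for each $i=0,1$:
\begin{equation*}
\|g_0-g_1\|\leq \|g_0-g_m\|+\|g_m-g_1\|\leq D_s\phi_0(x)+D_s\phi_1(x)+\frac{G_{\phi_0}(x,g_m)+G_{\phi_1}(x,g_m)}{s}.
\end{equation*}
All gradients lie in $\Y$, so $\|g_0-g_1\|^q\leq D_\Y^{q-1}\|g_0-g_1\|$. Integrating and applying Theorem~\ref{thrm: quant reg} to both $\phi_0$ and $\phi_1$ (both have $\partial\phi_i(\R^d)\subset\Y$) gives, for every $s>0$,
\begin{equation*}
\|g_0-g_1\|_{L^q(\rho)}^q\leq D_\Y^{q-1}\Big(2c_dMD_\X^{d-1}D_\Y s^\theta+\frac{2J_\rho(\psi_0,\psi_1)}{s}\Big).
\end{equation*}
Here I use $\X\subset \overline B_{D_\X}$ after a translation, absorbing the scale factor into the constant. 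Optimizing over $s$ gives $\|g_0-g_1\|^q_{L^q(\rho)}\lesssim (D_\Y^{q-1})\,(c_dMD_\X^{d-1}D_\Y)^{1/(1+\theta)}J_\rho^{\theta/(1+\theta)}$. Raising both sides to the power $(1+\theta)/\theta$ gives the stated form \eqref{eq: kanto strong conv Lq}.

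\emph{Constant.} The factors $\theta(1+\theta)^{-(1+\theta)/\theta}$ come from the exact minimization of $as^\theta+b/s$. The factor $4^{-(1+\theta)/\theta}$ collects the factors of $2$ above. I expect $k_q$ to encode the crude step $\|g_0-g_1\|^q\leq D_\Y^{q-1}\|g_0-g_1\|$, or possibly a triangle-inequality constant such as $2^{q-1}$ if one works with $q$-th powers of each piece instead. I would fix $k_q$ by carrying out this bookkeeping and checking that the power of $D_\Y$ comes out as $q+\theta(q-1)$.

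The main obstacle is Step 1. It requires the inequality $\psi_m^{**}\leq\psi_m$ together with the correct identification of $\psi_i^{**}$ with $\phi_i^*$ for the conjugate taken with domain $\Y$, so that the gap functions in Lemma~\ref{lem: Fenchel bound distance} are the right ones. It also requires $\rho$-a.e. differentiability of $\phi_m$, which follows from \ref{eq: assumption eqn} because such a $\rho$ does not charge $(d-1)$-rectifiable sets. The remaining steps are the same machinery as in Theorem~\ref{thrm: map coercivity wrt dual suboptimality}.
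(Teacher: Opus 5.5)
\textbf{Step 1 is incorrect as written, although its conclusion is true and easy to repair.} With the repair, your proof is complete and follows a different route from the paper's.

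\textbf{The error in Step 1.} From $\psi_m^{**}\le\psi_m$ you obtain $\phi_m(x)\ge\langle x,g_m\rangle-\tfrac12(\psi_0+\psi_1)(g_m)$. This is a \emph{lower} bound on $\phi_m(x)$. But a lower bound on $j(x)=\tfrac12(\phi_0+\phi_1)(x)-\phi_m(x)$ requires an \emph{upper} bound on $\phi_m(x)$.

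By the Fenchel identity $\phi_m(x)=\langle x,g_m\rangle-\psi_m^{**}(g_m)$, your claimed inequality is equivalent to $\psi_m^{**}(g_m)\ge\tfrac12\big(\phi_0^*(g_m)+\phi_1^*(g_m)\big)$. The two facts you invoke only give $\psi_m^{**}\le\psi_m$ and $\psi_m\ge\tfrac12(\phi_0^*+\phi_1^*)$, which do not compare $\psi_m^{**}$ with $\tfrac12(\phi_0^*+\phi_1^*)$. So the key inequality you single out as ``the main obstacle'' is the wrong one.

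\textbf{The fix.} You need the reverse comparison. The function $\tfrac12(\psi_0^{**}+\psi_1^{**})$ is closed, convex and lies below $\psi_m$. By Fenchel--Moreau it therefore lies below the closed convex envelope $\psi_m^{**}$. Hence
\[
j(x)=\tfrac12\big(\phi_0(x)+\phi_1(x)\big)-\langle x,g_m\rangle+\psi_m^{**}(g_m)\ge\tfrac12\big(G_{\phi_0}(x,g_m)+G_{\phi_1}(x,g_m)\big),
\]
which is your Step 1. You also do not need differentiability of $\phi_m$. Any $g_m\in\partial\phi_m(x)$ works, and $g_m$ disappears once Steps 1 and 2 are combined pointwise, so no measurable selection is needed either.

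\textbf{The rest of your argument.} With this repair, Step 2 goes through exactly as you wrote it. The bookkeeping gives
$\|g_0-g_1\|_{L^q(\rho)}^{q(1+\theta)/\theta}\le\theta^{-1}(1+\theta)^{(1+\theta)/\theta}\,2^{(1+\theta)/\theta}\big(D_\Y^{q+\theta(q-1)}c_dMD_\X^{d-1}\big)^{1/\theta}J_\rho(\psi_0,\psi_1)$.
This is the stated bound with $k_q$ replaced by $2^{-(1+\theta)}$. Since the paper's $k_q\ge 3/4$, your bound implies the theorem. Your guess about where $k_q$ comes from is off: in the paper it is the Young's inequality constant in $a^{q-1}b\le a^q/4+k_qb^q$.

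\textbf{Comparison with the paper's route.} The paper never uses Fenchel gaps. It works through the following steps:
\begin{enumerate}
\item the inf-convolution bound $\phi_m(x)\le\tfrac12\big(\phi_0(x+h)+\phi_1(x-h)\big)$, with the pointwise displacement $h_\lambda(x)=\lambda\|u\|^{q-2}u$, where $u=\nabla\phi_1-\nabla\phi_0$;
\item subgradient inequalities at $x\pm h$;
\item Young's inequality to split $\|u\|^q$ from $D_{\eta_\lambda}\phi_i^q$;
\item Theorem~\ref{thrm: quant reg} with exponent $q$;
\item optimisation in $\lambda$.
\end{enumerate}
Your version instead shows that the Jensen divergence dominates an averaged Fenchel gap at a subgradient of the midpoint potential. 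After that, the proof is literally that of Theorem~\ref{thrm: map coercivity wrt dual suboptimality}: Lemma~\ref{lem: Fenchel bound distance}, then Theorem~\ref{thrm: quant reg} with exponent $1$.

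Your route brings this appendix result under the same machinery as the main text. It avoids Young's inequality and the exponentially growing $k_q$, and it gives a slightly better constant. The exponent and the power of $D_\Y$ are the same in both proofs, because each loses one factor $D_\Y^{q-1}$: the paper in $\|h_\lambda\|\le D_\Y^{q-1}\lambda$, you in $\|g_0-g_1\|^q\le D_\Y^{q-1}\|g_0-g_1\|$. The paper's route, in turn, needs no biconjugate comparison at all.
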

\end{samepage}
\begin{proof}
    For $i=0,1$, $\phi_i = \psi_i^*$ is differentiable $\rho$-almost everywhere. Set $u(x) = \nabla \phi_1(x) - \nabla \phi_0(x)$ and $A = \|u\|_{L^q(\rho)}$. If $A=0$, the result follows from convexity of $\K_\rho$, so assume $A>0$. By definition,
    \begin{equation}
    \label{eq: inf convolution rep for jensen}
        J_\rho(\psi_0, \psi_1) = \int_\X j(x) \di \rho(x)
    \end{equation}
    where
    \begin{equation*}
        j(x) = \frac{\phi_0(x)+\phi_1(x)}{2} - \left(\frac{\psi_0+\psi_1}{2}\right)^*(x).
    \end{equation*}
    For every $h\in\R^d$, the definition of the conjugate gives
    \begin{equation*}
        \left(\frac{\psi_0+\psi_1}{2}\right)^*(x) \leq \frac{\phi_0(x+h)+\phi_1(x-h)}{2}.
    \end{equation*}
    Fix a point $x$ where both $\phi_i$ are differentiable. For some $h\in\R^d$ to be fixed later, choose $y_h^0\in\partial\phi_0(x+h)$ and $y_h^1\in\partial\phi_1(x-h)$. Subgradient inequalities give
    \begin{equation*}
        \phi_0(x) - \phi_0(x+h) \geq \langle y_h^0, -h \rangle \quad \text{and}\quad \phi_1(x) - \phi_1(x-h) \geq \langle y_h^1, h \rangle
    \end{equation*}
    so that
    \begin{align*}
        j(x) \geq& \frac{1}{2}\langle y_h^1 - y_h^0, h\rangle\\
        =& \frac{1}{2}\langle u(x), h\rangle + \frac{1}{2}\langle y_h^1 - \nabla \phi_1(x), h \rangle + \frac{1}{2}\langle \nabla\phi_0(x) - y_h^0, h \rangle.
    \end{align*}
    We choose, for some $\lambda>0$ to be fixed later,
    \begin{equation*}
        h_\lambda(x) = \begin{cases}
            \lambda\|u(x) \|^{q-2} u(x) & u(x) \neq 0, \\
            0& u(x) = 0.
        \end{cases}
    \end{equation*}
    Set $\eta_\lambda := D_\Y^{q-1}\lambda$, so that $\|h_\lambda(x)\|\leq\eta_\lambda$. Set $k_1 = 1$ and for $q>1$,
    \begin{equation*}
        k_q = \frac{4^{q-1}(q-1)^{q-1}}{q^q} \geq \frac{3}{4}.
    \end{equation*}
    For $q>1$, Young's inequality $a^{q-1}b \leq a^q/4 + k_q b^q$ implies
    \begin{equation}
        \label{eq: pointwise j rho lower bound}
        j(x) \geq \frac{\lambda}{4}\|\nabla \phi_1(x) - \nabla\phi_0(x) \|^{q} - \frac{k_q\lambda}{2} \left(\big(D_{\eta_{\lambda}} \phi_0(x)\big)^{q} + \big(D_{\eta_\lambda} \phi_1(x)\big)^{q} \right),
    \end{equation}
    and when $q=1$ the same inequality holds directly since $\|h_\lambda(x)\| \in \{0, \lambda\}$. For any $\lambda\geq 0$, Theorem~\ref{thrm: quant reg} gives
    \begin{align*}
         \int_{\X} k_q \big(D_{\eta_{\lambda}} \phi_i(x)\big)^{q} \di \rho(x) \leq& k_q D_\Y^q c_d M D_\X^{d-1} \eta_\lambda^\theta\\
        =& k_q D_\Y^{q+\theta(q-1)}c_d M D_\X^{d-1}\lambda^\theta = \overline C \lambda^\theta.
    \end{align*}
    Thus integrating \eqref{eq: pointwise j rho lower bound} with respect to $\rho$,
    \begin{equation}
        \label{eq: J rho lambda lower bound}
        J_{\rho}(\psi_0, \psi_1) \geq \frac{\lambda}{4}A^q - \overline C\lambda^{1+\theta}.
    \end{equation}
    We choose
    \begin{equation*}
        \lambda^\theta = \frac{A^q}{4(1+\theta)\overline C},
    \end{equation*}
    so that substituting the chosen value of $\lambda$ into \eqref{eq: J rho lambda lower bound} gives
    \begin{equation*}
        J_\rho(\psi_0, \psi_1) \geq  CA^{\frac{q(1+ \theta)}{\theta}}
    \end{equation*}
    for the $C$ given in the statement of the theorem, so the proof is complete.
\end{proof}

\par\bigskip

\noindent \textbf{Funding:}
This research benefited from the support of the FMJH Program Gaspard Monge for optimization and operations research and their interactions with data science.

\noindent \textbf{Statement of generative AI use:} The author acknowledges the use of GPT 5.6 Sol during the research and writing of this article, as well as GPT 6 Astra in the final revision phases. In particular, Lemma~\ref{lem: Fenchel bound distance} and Theorem~\ref{thrm: same subgrad plan control} were found by the AI agent, the first of which was used to give a more direct proof of the stability results rather than the author's initial approach passing via the Jensen divergence based on \cite{merigot2026sharp}, which is given in the appendix. The sharpness examples were constructed by the AI agent, which also wrote the TikZ code for the graphics, whilst the body of the paper was drafted by the author. Subsequent AI assistance was used for stylistic and language corrections. The author takes sole responsibility for the veracity of the mathematical content of the paper.

\noindent \textbf{Conflicts of interest:} The author declares no conflicts of interest.

\noindent \textbf{Data availability:} No datasets were generated or analysed in this study.

\printbibliography

@article{brenier1991polar,
  title={Polar factorization and monotone rearrangement of vector-valued functions},
  author={Brenier, Yann},
  journal={Communications on Pure and Applied Mathematics},
  volume={44},
  number={4},
  pages={375--417},
  year={1991},
  publisher={Wiley Online Library}
}

@article{delalande2023quantitative,
  title={Quantitative stability of optimal transport maps under variations of the target measure},
  author={Delalande, Alex and M{\'e}rigot, Quentin},
  journal={Duke Mathematical Journal},
  volume={172},
  number={17},
  pages={3321--3357},
  year={2023},
  publisher={Duke University Press}
}

@article{carlier2024barycentres,
  title={Quantitative stability of barycenters in the {W}asserstein space},
  author={Carlier, Guillaume and Delalande, Alex and M{\'e}rigot, Quentin},
  journal={Probability Theory and Related Fields},
  volume={188},
  number={3--4},
  pages={1257--1286},
  year={2024},
  publisher={Springer}
}

@article{letrouit2024gluing,
  title={Gluing methods for quantitative stability of optimal transport maps},
  author={Letrouit, Cyril and M{\'e}rigot, Quentin},
  journal={To appear in Annales Scientifiques de l'Ecole Normale Supérieure},
  year={2024}
}

@article{carlier2024pushforward,
  title={Quantitative stability of the pushforward operation by an optimal transport map},
  author={Carlier, Guillaume and Delalande, Alex and M{\'e}rigot, Quentin},
  journal={Foundations of Computational Mathematics},
  volume={25},
  pages={1259--1286},
  year={2025},
  doi={10.1007/s10208-024-09669-4},
  publisher={Springer}
}

@book{letrouit2025lectures,
  title={Quantitative stability in optimal transport},
  author={Letrouit, Cyril},
  series={Cours spécialisés},
  publisher={Société Mathématique de France},
  pubstate={forthcoming},
  url={https://www.imo.universite-paris-saclay.fr/~cyril.letrouit/teaching/Peccotfinal.pdf}
}

@article{li2021quantitative,
  title={Quantitative stability and error estimates for optimal transport plans},
  author={Li, Wenbo and Nochetto, Ricardo H},
  journal={IMA Journal of Numerical Analysis},
  volume={41},
  number={3},
  pages={1941--1965},
  year={2021},
  publisher={Oxford University Press}
}

@article{gigli2011holder,
  title={On {H}{\"o}lder continuity-in-time of the optimal transport map towards measures along a curve},
  author={Gigli, Nicola},
  journal={Proceedings of the Edinburgh Mathematical Society},
  volume={54},
  number={2},
  pages={401--409},
  year={2011},
  publisher={Cambridge University Press}
}

@article{gallouet2018lagrangian,
  title={A Lagrangian scheme {\`a} la {B}renier for the incompressible {E}uler equations},
  author={Gallou{\"e}t, Thomas O and M{\'e}rigot, Quentin},
  journal={Foundations of Computational Mathematics},
  volume={18},
  number={4},
  pages={835--865},
  year={2018},
  publisher={Springer}
}

@article{berman2021convergence,
  title={Convergence rates for discretized {M}onge--{A}mp{\`e}re equations and quantitative stability of optimal transport},
  author={Berman, Robert J},
  journal={Foundations of Computational Mathematics},
  volume={21},
  number={4},
  pages={1099--1140},
  year={2021},
  publisher={Springer}
}

@article{ford2025quantitative,
  title={Quantitative Stability in Discrete Optimal Transport},
  author={Ford, William},
  journal={arXiv preprint arXiv:2510.17407},
  year={2025}
}

@misc{ford2026quantitative,
  title={Quantitative uniqueness of {Kantorovich} potentials},
  author={Ford, William},
  eprint={2603.29595},
  eprinttype={arxiv},
  year={2026}
}

@misc{merigot2026sharp,
  title={Sharp stability of {Brenier} maps via quantitative regularity of potentials},
  author={M{\'e}rigot, Quentin},
  year={2026},
  howpublished={Preprint, HAL hal-05616391},
  url={https://hal.science/hal-05616391v1}
}

@article{letrouit2026unstable,
  title={Unstable optimal transport maps},
  author={Letrouit, Cyril},
  journal={Comptes Rendus. Math{\'e}matique},
  volume={364},
  number={G2},
  pages={333--344},
  year={2026}
}

@book{ambrosio2000functions,
  title={Functions of bounded variation and free discontinuity problems},
  author={Ambrosio, Luigi and Fusco, Nicola and Pallara, Diego},
  year={2000},
  publisher={Oxford University Press}
}

@article{acosta2004optimal,
  title={An optimal {Poincar{\'e}} inequality in {$L^1$} for convex domains},
  author={Acosta, Gabriel and Dur{\'a}n, Ricardo},
  journal={Proceedings of the American Mathematical Society},
  volume={132},
  number={1},
  pages={195--202},
  year={2004}
}

@article{carlier2023fenchel,
  title={{Fenchel--Young} inequality with a remainder and applications to convex duality and optimal transport},
  author={Carlier, Guillaume},
  journal={SIAM Journal on Optimization},
  volume={33},
  number={3},
  pages={1463--1472},
  year={2023},
  publisher={SIAM}
}

@article{gallouet2025strong,
  title={Strong c-concavity and stability in optimal transport},
  author={Gallou{\"e}t, Anatole and M{\'e}rigot, Quentin and Thibert, Boris},
  journal={Journal de Math{\'e}matiques Pures et Appliqu{\'e}es},
  volume={205},
  eid={103773},
  year={2026},
  doi={10.1016/j.matpur.2025.103773},
  publisher={Elsevier}
}

@article{manole2024plugin,
  title={Plugin estimation of smooth optimal transport maps},
  author={Manole, Tudor and Balakrishnan, Sivaraman and Niles-Weed, Jonathan and Wasserman, Larry},
  journal={The Annals of Statistics},
  volume={52},
  number={3},
  pages={966--998},
  year={2024},
  publisher={Institute of Mathematical Statistics}
}

@article{cazelles2026statistical,
  title={Statistical estimation of {Monge} transport maps via {Brenier} potentials},
  author={Cazelles, Elsa and Pauwels, Edouard and Portales, L{\'e}o},
  journal={arXiv preprint arXiv:2604.22366},
  year={2026}
}

@article{mccann1995existence,
  title={Existence and uniqueness of monotone measure-preserving maps},
  author={McCann, Robert J},
  journal={Duke Mathematical Journal},
  volume={80},
  number={2},
  pages={309--323},
  year={1995},
  doi={10.1215/S0012-7094-95-08013-2}
}

@article{gigli2011inverse,
  title={On the inverse implication of {Brenier--McCann} theorems and the structure of {($\mathcal{P}_2(M), W_2$)}},
  author={Gigli, Nicola},
  journal={Methods Appl. Anal},
  volume={18},
  number={2},
  pages={127--158},
  year={2011}
}

@article{zajivcek1979differentiation,
  title={On the differentiation of convex functions in finite and infinite dimensional spaces},
  author={Zaj{\'\i}{\v{c}}ek, Lud{\v{e}}k},
  journal={Czechoslovak Mathematical Journal},
  volume={29},
  number={3},
  pages={340--348},
  year={1979},
  publisher={Institute of Mathematics, Academy of Sciences of the Czech Republic}
}

@article{gutierrez2022estimates,
  title={{$L^\infty$}-estimates in optimal transport for non quadratic costs},
  author={Guti{\'e}rrez, Cristian E and Montanari, Annamaria},
  journal={Calculus of Variations and Partial Differential Equations},
  volume={61},
  number={5},
  eid={163},
  year={2022},
  publisher={Springer}
}

@article{bouchitte2007new,
  title={A new {$L^\infty$} estimate in optimal mass transport},
  author={Bouchitt{\'e}, Guy and Jimenez, Chlo{\'e} and Rajesh, M},
  journal={Proceedings of the American Mathematical Society},
  volume={135},
  number={11},
  pages={3525--3535},
  year={2007},
  publisher={American Mathematical Society}
}

@article{el2012bayesian,
  title={Bayesian inference with optimal maps},
  author={El Moselhy, Tarek A and Marzouk, Youssef M},
  journal={Journal of Computational Physics},
  volume={231},
  number={23},
  pages={7815--7850},
  year={2012},
  publisher={Elsevier}
}

@article{alberti1992singularities,
  title={On the singularities of convex functions},
  author={Alberti, Giovanni and Ambrosio, Luigi and Cannarsa, Piermarco},
  journal={Manuscripta Math},
  volume={76},
  number={3-4},
  pages={421--435},
  year={1992}
}

@article{alberti1994structure,
  title={On the structure of singular sets of convex functions},
  author={Alberti, Giovanni},
  journal={Calculus of Variations and Partial Differential Equations},
  volume={2},
  number={1},
  pages={17--27},
  year={1994},
  publisher={Springer}
}

@article{courty2016optimal,
  title={Optimal transport for domain adaptation},
  author={Courty, Nicolas and Flamary, R{\'e}mi and Tuia, Devis and Rakotomamonjy, Alain},
  journal={IEEE transactions on pattern analysis and machine intelligence},
  volume={39},
  number={9},
  pages={1853--1865},
  year={2016},
  publisher={IEEE}
}

@inproceedings{makkuva2020optimal,
  title={Optimal transport mapping via input convex neural networks},
  author={Makkuva, Ashok and Taghvaei, Amirhossein and Oh, Sewoong and Lee, Jason},
  booktitle={International Conference on Machine Learning},
  pages={6672--6681},
  year={2020},
  organization={PMLR}
}

@inproceedings{arjovsky2017wasserstein,
  title={Wasserstein generative adversarial networks},
  author={Arjovsky, Martin and Chintala, Soumith and Bottou, L{\'e}on},
  booktitle={International conference on machine learning},
  pages={214--223},
  year={2017},
  organization={Pmlr}
}

@article{carlier2016vector,
  title={Vector quantile regression: an optimal transport approach},
  author={Carlier, Guillaume and Chernozhukov, Victor and Galichon, Alfred},
  year={2016}
}

@article{deb2023multivariate,
  title={Multivariate rank-based distribution-free nonparametric testing using measure transportation},
  author={Deb, Nabarun and Sen, Bodhisattva},
  journal={Journal of the American Statistical Association},
  volume={118},
  number={541},
  pages={192--207},
  year={2023},
  publisher={Taylor \& Francis}
}

@article{budd2015geometry,
  title={The geometry of r-adaptive meshes generated using optimal transport methods},
  author={Budd, CJ and Russell, Robert D and Walsh, E},
  journal={Journal of Computational Physics},
  volume={282},
  pages={113--137},
  year={2015},
  publisher={Elsevier}
}


\end{document}